\crefname{hypothesis}{Hypothesis}{Hypotheses}
\title{Weak Measure-Valued Solutions of a Nonlinear Hyperbolic
       Conservation Law
\thanks{Submitted to the editors 12/25/2019.
}}
\author{Xiaoqian Gong\thanks{Center for Computational and Integrative Biology,
Rutgers University, Camden, NJ
  (\email{xg143@scarletmail.rutgers.edu}).}
\and Matthias Kawski\thanks{School of Mathematical and Statistical Sciences,
Arizona State University, Tempe, AZ,
  (\email{kawski@asu.edu}  ).}}
\begin{document}

\maketitle

\begin{abstract}
We revisit a well-established model for highly re-entrant
semi-conductor manufacturing systems, and analyze it in the
setting of states, in- and outfluxes being Borel measures.
This is motivated by the lack of optimal solutions in the
$L^1$-setting for transitions from a smaller to a larger
equilibrium with zero backlog.
Key innovations involve dealing with discontinuous velocities
in the presence of point masses, and a finite domain with in-
and outfluxes.
Taking a Lagrangian point of view, we establish existence and
uniqueness of solutions, and formulate a notion of weak solution.
We prove continuity of the flow with
respect to time (and almost also with respect to the initial
state). Due to generally discontinuous velocities, these
delicate regularity results hold only with respect to carefully
crafted semi-norms that are modifications of the flat norm. 
Generally. the solution is not continuous with respect to
any norm on the space of measures.
\end{abstract}

\begin{keywords}
Measures, hyperbolic conservation law, re-entrant, semi-conductor.
\end{keywords}

\begin{AMS}
	35L65, 	35L60, 93C20.
\end{AMS}

\section{Introduction}
\label{sect1}
In the context of data and solutions that are Borel measures,
this article reinterprets, analyzes, and proves well-posedness of
the scalar nonlinear controlled hyperbolic conservation law
(for fixed $T>0$)
\begin{subequations}
\label{npeqn1}
\begin{align}
\label{npeqn1a}
0&= \partial_t \rho(t,x) + \partial_x (\alpha(W(t))\rho(t,x))
\;\mbox{ for }\; (t,x)\in [0,T] \times [0,1],
\\
\label{npeqn1b}
W(t)&=\int_0^1 \rho(t,x) \,dx \;\mbox{ for }\; t\in [0,T],
\\
\label{npeqn1c}
\rho_0(x) &= \rho (0,x) \;\mbox{ for }\; x\in [0,1],
\\
\label{npeqn1d}
u(t)&= \rho (t,0) \alpha(W(t)) \;\mbox{ for }\; t\in [0,T],\;\mbox{ and }
\\
\label{npeqn1e}
y(t)&= \rho (t,1) \alpha(W(t)) \;\mbox{ for }\; t\in [0,T].
\end{align}
\end{subequations}
This system was introduced in~\cite{AM2006} to model
highly re-entrant semiconductor manufacturing systems.
Here $t$ represents time,
$x$ the degree of completion of the product,
and $\rho$ the density of the {\em work in progress}.
The velocity $\alpha$ is a decreasing
nonnegative function of the total load $W$ of the factory.
This reflects the highly re-entrant character of semi-conductor
manufacturing systems, in which each {\em part} revisits the same
machines again and again as additional layers are added to each part.
The original work~\cite{AM2006} validated the model
using numerical simulations, comparing with discrete-event
systems, and with real factory data.
Given an initial density $\rho_0$, the first main problem
is to find a control influx $u$ such that
the outflux $y$ tracks a given reference {\em forecasted demand signal} $y_d$.
The corresponding optimal control problem of finding
a control influx $u$ that minimizes an
error signal such as $J(u)=\|y-y_d\|$ was studied in
\cite{herty2010}, which,
in the context of $L^2$-data $\rho_0,\;u,\;y_d$
and an $L^2$-control objective derived adjoint equations,
and numerically computed approximations of optimal controls.

The article~\cite{CKW2010} proved well-posedness for the
Cauchy problem~\eqref{npeqn1a}-\eqref{npeqn1d} (disregarding the outflux)
in the context of $L^1$-data (thus implying well-posedness for $L^2$-data),
analyzed the regularity of solution curves
$\rho\colon [0,T]\mapsto L^1([0,1])$, and established existence
of optimal controls for the original $L^2$-problem.
Well-posedness of a related multi-dimensional uncontrolled
problem with unbounded spatial dimension and was also demonstrated
in~\cite{Herty2011}, proving local existence of a weak entropy solutions
and examining differentiability with respect to initial data.

Both, more meaningful from the business perspective,
and mathematically more challenging is the optimal
control problem with an $L^1$-objective.
From a business perspective, an important problem is the transfer
between equilibria with zero backlog (at the terminal time)
and minimizing the $L^1$-norm of the backlog,
that is, the difference between the desired cumulative outflux
$Y_d(t)=\int_0^ty_d(\tau)\,d\tau$ and the actual cumulative outflux
$Y(t)=\int_0^ty(\tau)\,d\tau$.
While the complete solution remains elusive, partial analytical
results and numerical studies indicate that for the zero-backlog
problem no optimal controls exists in $L^1([0,T])$
(for the transfer from a smaller to a larger equilibrium): Minimizing
sequences converge to impulsive controls~\cite{XGthesis,hyp18}.
Thus it is natural to recast the problem in the setting of
controls and states being measures.

In recent years the analysis of similar hyperbolic conservation
laws in the setting of measures has seen substantial attention
and progress.
Here we briefly mention a few, and point the interested readers
to related references in these articles.
Motivated by earlier work on interactions of densities and point masses
in the context of prey and predators~\cite{predprey},
the article~\cite{shortpaper} established the well-posedness
of similar nonlinear hyperbolic conservation laws~\eqref{npeqn1}
with non-local velocity in the setting of measure-valued data.
Other articles are motivated as models for pedestrian or vehicular
traffic. The Wasserstein metric is a popular tool for models
that use probability measures, usually on unbounded domains.
In order to allow for sources, and nonconstant total mass
a generalized Wasserstein metric was introduced
and studied in ~\cite{numerical, GWD}.
Closely related are the Kantorovich-Rubinstein norm
and the dual bounded Lipschitz-norm or flat norm, see~\cite{Thieme2018}
for a careful study of continuity of semiflows on the space
of Borel measures endowed with the flat norm.
The article \cite{thethesis} introduces an innovative concept
of sticky boundaries to deal with flux boundary conditions, while
the article \cite{Evers16} considers a flow that stops at the boundaries.
Other very recent closely related articles~\cite{keimer2017,keimer2018}
consider system with the velocity being a {\em weighted} functional
of the work in progress.

The problem addressed in this article has several distinctive features
that significantly set it apart from the recent literature.
Foremost, due to generally the influx being different from the outflux,
the total mass is not constant.
Consequently, most tools available for probability measures such
as the Wasserstein metric do not apply here.
Even more importantly, as a characteristic feature of the highly
re-entrant semiconductor manufacturing system~\cite{AM2006},
the velocity depends on the total load as in~\eqref{npeqn1b},
whereas in most popular traffic models it is governed
by local interactions which are modeled by convolutions,
which naturally smoothen the velocity.
However, in the factory model impulsive influxes and outfluxes cause the total load,
and hence the velocity, to be discontinuous as functions of time.
Consequently, weak solutions of~\eqref{npeqn1}
are no longer meaningful in the usual distributional sense
(formal integration by parts, discontinuous velocities multiplying
Dirac distributions).

Staying close to the original manufacturing system modeled by~\eqref{npeqn1}
this article reinterprets the hyperbolic conservation law~\eqref{npeqn1a}-\eqref{npeqn1c}.
The key is to temporarily
abandon the Eulerian point of view, and instead focus on the
Lagrangian point of view, which tracks locations of {\em parts}
(or {\em particles}).
Subsection~\ref{sect21} motivates the reinterpretation by coupling
the hyperbolic conservation law~\eqref{npeqn1a} for $L^1$-densities
with a sequence of ordinary differential equations that track the
location of {\em concentrations} resulting from impulses in the
control influx, or already present in the initial data.
The remainder of section~\ref{sect2} formalizes the problem statement
in the context of Borel measures, disambiguates terminology, fixes
notation, and assembles some technical tools needed later.
Section~\ref{sect3} establishes the existence of unique solutions
for the Cauchy problem from the Lagrangian point of view for
the system with data that are Borel measures.
A key step is to obtain a uniform lower bound for the lengths
of time-intervals on which we can construct contraction mappings.
This involves an innovation for dealing with large point masses
leaving the system at a-priori unknown times.
The construction takes advantage of the characteristic curves
being bi-Lipschitz, a key feature of the model.
Section~\ref{sect4} defines a notion of weak solution
for the system~\eqref{npeqn1} with data that are Borel measures,
demonstrates that the Lagrangian solutions are weak solutions,
and that the weak solutions are unique.
Due to the lack of continuity, the space of test functions has
to be significantly modified from the standard spaces.
Finally, regularity properties of the solutions are proven.
Given the noncompact domain, these are expressed using weighted
versions of the flat norm, defined in terms of semi-norms that
account for impulses entering and leaving the system. Note that
the solution is not continuous with respect to the flat norm.

Staying close to the features of the original manufacturing system
modeled by~\eqref{npeqn1}, provides both advantages that suggest
carefully tailored approaches, but also leads to technical
complications that prevent application of standard tools.
In particular, we chose the underlying spaces
for our measures to be the noncompact intervals $[0,1)$ and $(0,T]$.
This is essential for obtaining the desired contractions needed for
fixed point arguments. Another simple and strong argument for this
choice is the common practical choice of the CONWIP dispatch policy
({\em ``constant work in progress''}) for factories that are performing
well: use the output feedback law $u=y$, influx equals outflux.
With impulsive outfluxes, if using $[0,1]$ instead, this would lead
to awkward total loads  which are constant except for possibly
countably many jump discontinuities.

\section{Motivation, disambiguation of terminology, and technical preparation}
\label{sect2}

\subsection{Motivating the Lagrangian point of view:
A PDE coupled with two sequences of ODEs}
\label{sect21}

In \cite{XGthesis} it was shown that for a transfer from a smaller to a larger
equilibrium state no $L^1$ influx $u$ (with small support) minimizes the
$L^1$-objective function $J(u)=\int_0^T\int_0^t|(y(s)-y_d(s)|\,ds\,dt$
(the tracking error between the actual and the desired accumulated outfluxes,
for arbitrary but fixed $T>0$).
Minimizing sequences {\em tend to impulsive controls}.
This suggests that one consider influxes that are densities together with
multiple, possibly, countably many of such point masses.
This is similar in spirit to ~\cite{predprey}, which combined
densities of prey together with isolated    predators (point masses).
One possible model for this might incorporate a combination of
a hyperbolic conservation law for an $L^1$-density as in~\eqref{npeqn1a}
together with a sequence of ODEs,
all of which are coupled by the total mass and velocity.

Suppose the initial state $\rho_0$ consists of a
function $\rho_{0,L^1}\in L^1([0,1))$ and a sequence
of point masses $m_i$ located at pairwise distinct
$x_i \in [0,1)$, $i = 1, 2, \cdots $
with $\sum\limits _{i=1}^\infty m_i <\infty$.
Similarly, the influx $u$ consists of a function $u_{L^1}\in L^1((0,T])$
and a sequence of point masses $M_j$
entering the system at pairwise distinct  times $t_j \in (0,T]$,
$j = 1, 2, \cdots $
with $\sum\limits_{j=1}^\infty M_j <\infty$.
Furthermore, let $\xi_i : [0, T] \to [0, +\infty)$
and $\eta_j : [t_j, T] \to [0, +\infty)$ trace the location
of the masses $m_i$ and $M_j$, respectively.
This suggests the following coupled model
\begin{subequations}
\label{p1eqn5}
\begin{align}
\label{p1eqn5a}
0&= \partial_t \rho(t,x) + \partial_x (\alpha(W(t))\rho(t,x))
\;\mbox{ for }\; (t,x)\in [0,T] \times [0,1),
\\
\label{p1eqn5a1}
\xi_i'(t)&=\alpha(W(t)) \;\mbox{ for almost all }\; t\in [0,T]
           \mbox{ and }\; i\in \mathbb{Z}^+,
\\
\label{p1eqn5a2}
\eta_j'(t)&=\alpha(W(t))\;\mbox{ for  almost all}\; t\in [t_j,T]
           \mbox{ and }\; j\in \mathbb{Z}^+,
\\
\label{p1eqn5b}
W(t)&=\int_0^1 \rho(t,x) \,dx \; +\!\!\!\!\sum_{\{i\colon  \xi_i(t)<1\}} \!\!\!\!\!\!\!\!m_i
\;\;+\!\!\!\sum_{\{j\colon t\geq t_j \mbox{ and }\eta_j(t)<1\}} \!\!\!\!\!\!\!\!\! M_j \mbox{ for }\; t\in [0,T],
\\
\label{p1eqn5c}
\rho_{0,L^1}(x) &= \rho (0,x) \;\mbox{ for }\; x\in [0,1),
\\
\label{p1eqn5c1}
\xi_i(0) &= x_i \;\mbox{ for }\;i\in \mathbb{Z}^+,
\\
\label{p1eqn5c2}
\eta_j(t_j) &= 0 \;\mbox{ for }\;j\in \mathbb{Z}^+,
\;\mbox{ and}
\\
\label{p1eqn5d}
u_{L^1}(t)&= \rho (t,0) \alpha(W(t)) \;\mbox{ for }\; t\in [0,T].
\end{align}
\end{subequations}

Note that for every fixed $t\in [0,T]$ the velocity $\alpha(W(t))$
is constant with respect to the location $x\in [0,1)$.
Thus there really is only a single ordinary differential equation.
All $\xi_i$ and $\eta_j$ are translates of each other
(up to restrictions of the domains).

This model, which combines densities with point-masses, reflects
natural features of the optimal control problem as dictated by the
original manufacturing system and business objective.
Mathematically, one may prefer combine the densities
and point masses and informally write
$u_{L^1}+\sum_j M_j\delta_{t_j}$ and
$\rho_{0,L^1}+\sum_i m_i\delta_{x_i}$
(with $\delta_s$ denoting the Dirac measure centered at $s$).
This model ~eqref{p1eqn5} shall serve as a mental reference when posing 
the model~\eqref{eq3} fin the context of Borel measures. The critical step is the interpretation
of the influx, relating measures in time to measures in space,
as products discontinuous velocities and Dirac measures
are not well-defined.

\subsection{Notation, and disambiguation of terminology}
\label{sect22}
Before continuing to frame the motivational problem~\eqref{p1eqn5}
in terms of measures, we take a moment to fix notation,
disambiguate terminology, and review some basic facts
about measurable functions and their compositions with
measurable, and with continuous functions that will be used
in the sequel.

For an interval $I\subseteq \mathbb{R}$ denote by $\mathcal{M}(I)$ and
$\mathcal{M}^{+}(I)$ the spaces of signed, respectively,
nonnegative finite Borel measures on $I$.
From now on we assume that the initial datum
$\rho_0 \in \mathcal{M}^{+}([0,1))$
and the control influx $\mu \in \mathcal{M}^{+}((0,T])$
 -- we write $\mu$ instead of $u$ -- are
finite nonnegative Borel measures.
Solutions of the problem will be curves
$\rho\colon [0,T] \mapsto \rho_t \in \mathcal{M}^{+}([0,1))$
that satisfy a differential equation in a sense to be defined.
Their regularity will be addressed in subsection~\ref{sect43}.
We may interchangeably use either $\rho_t$ and $\rho(t)$
depending on what makes for easier reading.

Throughout we assume that the velocity  is a strictly decreasing function 
$\alpha \colon [0,\infty)\mapsto (0,1]$ (of the load~$W$) normalized by $\alpha(0)=1$,
that is positive (hence bounded away from zero on compact sets),
and is Lipschitz continuous with Lipschitz constant $L$:
for all $W_1, W_2 \geq 0$, $|\alpha(W_1)-\alpha(W_2)| \leq L |W_1-W_2|$.
Whereas the article~\cite{AM2006}
used $\alpha(W) = \max\{ 0,1-\frac{W}{W_0}\}$,
a much more common choice in subsequent work was
$\alpha (W)=\frac{1}{1+W}$, which we use in some examples.
However, the general results presented here only use the above
stated properties of $\alpha$.

Recall the common conflicts of using the term {\em measurable function}
in  different settings: In abstract measure spaces, a function
$f\colon (A,\Sigma_1)\mapsto (B,\Sigma_2)$ is measurable if for
every (measurable) set $Y\in \Sigma_2$ the preimage $f^{-1}(Y)$
is measurable, i.e., $f^{-1}(Y)\in \Sigma_1$. For example,
a function is Borel measurable in this sense if preimages
of Borel sets are again Borel sets. In contrast,
it is common to say that a function
$f\colon \mathbb{R} \mapsto \mathbb{R}$ is (Lebesgue) measurable
if the preimage of every ray $(a,\infty)$,
and thus of every Borel subset of $\mathbb{R}$ is (Lebesgue) measurable.
Since the Lebesgue measure on the real line is regular,
Borel measures on real line are also regular
(both inner and outer regular: measurable sets are approximated
by compact sets from the inside, and by open sets from the outside).
Thus the refined Lebesgue decomposition theorem
also applies to the Borel measures in $\mathcal{M}^{+}$.

In the first, abstract setting compositions of measurable functions are measurable.
For instance, compositions of Borel measurable functions are again
Borel measurable. Furthermore, all monotone functions are Borel measurable.
In particular, in the sequel flow-lines $(t;r,x_0)\mapsto X(t; r, x_0)$
(defined in \eqref{defflow}) will play a key-role, as they map Borel sets
to Borel sets. Due to their Lipschitzness and their strict monotonicity
in each variable, they are always Borel measurable.

In the second setting of Lebesgue measure on the real line,
not even compositions of measurable and continuous functions
need be measurable, as is demonstrated by the well-known
example constructed from Cantor's devil's staircase function.
For a measure space $(X, \mathcal{M}, \mu)$,
a non-negative measurable function $f$ on $X$, $f$
is said to be integrable over $X$ (with respect to $\mu$)
provided $\int_{X}f\,d\mu < \infty$.
Furthermore, suppose $f\colon \mathbb{R}\mapsto \mathbb{R}$ is continuous
and $g\colon [a,b]\mapsto\mathbb{R}$ is Lebesgue integrable.
If there exist constants $c$ and $d$ such that for all $x\in [a,b]$,
$\;\left|f(x)\right| < c+d\left|x\right|$,
then $f\circ g$ is Lebesgue integrable over $[a,b]$.
In particular, in the proof to the uniqueness of the weak solution
of \eqref{eq3} (theorem \eqref{thmunique}),
the composition of $\hat{\xi}$ is well-defined since $\hat{W}$
is integrable and $\alpha$ is continuous and bounded.

\subsection{Posing the problem for data and states that are Borel measures}
\label{sect23}

It is mathematically more satisfactory than the complicated set-up in
section~\ref{sect21} to combine the $L^1$-densities
and point masses into measures and consider a single hyperbolic
conservation law like~\eqref{npeqn1} for data and states that are
Borel measures.
As briefly noted in the introduction, the choices of the half-open
intervals reflect the desire to have a constant total mass, to not
double count point masses entering and exiting at the same time,
when using the most simple output feedback $y=u$ that equates the
influx with the outflux. Moreover, these choices are instrumental
for technical arguments to obtain contractions.

{\bf Blanket assumption:}
We assume throughout that all the singular continuous parts of all
initial data and influxes are zero, i.e., they are sums of only an
absolutely continuous measure (with respect to Lebesgue measure)
and a pure point measure (a countable sum of positive multiples
of Dirac measures).

This assumption is motivated by the original industrial optimal
control problem where singular continuous measures seem to not
make much sense, and the desire to avoid unnecessary technical
complications in the sequel. This is well in line with much of
the recent literature, e.g.~\cite{numerical,GWD}.
It will be easy to see (lemma \ref{LemNosc}) that due to
characteristic curves being strictly monotone and bi-Lipschitz,
this implies that for every time~$t\in [0,T]$ the state
$\rho_t=\rho(t,\,\cdot\,)$ has the same property.

We restate the problem~\eqref{npeqn1} in the context of Borel measures,
with a purely formal first equation~\eqref{eq3a}
(because it is customary to first have a differential equation
before calling a curve a solution.)
For every fixed $\rho_0\in \mathcal{M}^{+}([0,1))$ and
$\mu\in \mathcal{M}^{+}((0,T])$, consider the
problem of finding a curve
$\rho\colon [0,T]\mapsto \mathcal{M}^{+}([0,1))$
and a map $X \colon \{(t;r,x)\colon 0\leq r\leq t\leq T,\,0\leq x\}
\mapsto [0,\infty)$ that satisfy in a sense to be established:
\begin{subequations}
\label{eq3}
\begin{align}
\label{eq3a}
0&= \partial_t \rho(t)
 +\partial_x (\alpha(W(t))\rho(t)) \;\mbox{ for a.e. }\; t\in [0,T],
\\
\label{eq3b}
W(t)&= \rho(t) ([0,1)) \;\mbox{ for all }\; t\in [0,T],
\\
\label{eq3c}
\rho_0& =\rho (0),
\\
\nonumber
\rho(t)(E) &= \mu(\{ r\in (0,t] \colon X(t;r,0)\in E \})+
              \rho_0(\{ x\in [0,1) \colon X(t;0,x)\in E \}),
\\
\label{eq3d}
&\;\;\; \;\mbox{ for all }\; t\in [0,T], \mbox{ and every  Borel set } E\subset [0,1),
\\
\label{eq3e}
\frac{d}{dt}X(t;r,x)&=\alpha(W(t)) \;\mbox{ for almost every }\; 0\leq r \leq t \leq T,
\;\mbox{ and all } x\in [0,1), \;\mbox{ and}
\\
\label{eq3f}
X(r;r,x)&=x \;\mbox{ for all}\; r\in [0,T]\;\mbox{ and all } x\in [0,1).
\end{align}
\end{subequations}
Equation~\eqref{eq3d} takes the role of the boundary
condition~\eqref{npeqn1d}, relating the {\em influx}
$\mu\in \mathcal{M}^{+}((0,T])$ to the state
$\rho_t\in \mathcal{M}^{+}([0,1))$.
It captures the sense of conservation of mass even better
than the partial differential equation \eqref{eq3a}.
However, this problem statement comes at the cost of presupposing
part of the form of the Lagrangian solution defined in the next section.

Informally, to connect system~\eqref{eq3} for Borel measures
to system~\eqref{npeqn1} for integrable functions,
the boundary condition \eqref{eq3d} might be interpreted
(in terms of the refined Lebesgue decomposition
$\mu = \mu_{ac}+\mu_{pp}$ and $\rho_t = \rho_{t,ac}+\rho_{t,pp}$)
where $\tilde{\rho}_{t,ac} = \left[ \frac{d\rho_{t,ac}}{d\lambda}\right]$ and
$u_{L^1} = \left[ \frac{d\mu_{ac}}{d\lambda}\right] $ are the Radon-Nikodym derivatives
of $\rho_{t,ac}$ and $\mu_{ac}$ with respect to Lebesgue measure $\lambda$.
For all Lebesgue measurable sets, and thus also for all
Borel sets $E \subset (0,T]$ and $F\subset[0,1)$, these satisfy
\begin{equation}
\mu_{ac}(E)=\int_{E} u_{L^1} d\lambda
\quad  \text{ and } \quad
\rho_{t,ac}(F) = \int_{F} \tilde{\rho}_{t,ac} d\lambda.
\end{equation}
From this point of view, the pure point part simply copies
from the time to the space direction, whereas the velocity
multiplies the $L^1$-functions associated to the absolutely
continuous parts - which is commensurate with $\rho_t$ being
the pushforward of $\mu$ by the semiflow as defined
in the next section.
\begin{equation}
\mu_{pp}(\{t\}) = \rho_{t,pp}(\{0\})
\quad  \text{ and } \quad
\tilde{u}_{L^1}(t)  = \tilde{\rho}_{t,ac}(0)\alpha(W(t)).
\end{equation}
Of course, as $L^1$ functions, the Radon-Nikodym derivatives
only have values at Lebesgue points. It could well be that,
e.g., $\tilde{\rho}_{t,ac}(0)$ is not defined for any $t$ at all,
i.e., if $x=0$ is not a Lebesgue point of $\tilde{\rho}_{t,ac}$
for  any $t\in [0,T]$.
Thus we consider this latter only an informal discussion, to motivate
the precise statement of notions of solutions in the forthcoming sections,
in particular, the choice how to deal with influx and outflux.

\section{Characteristic Curves and Lagrangian solutions}
\label{sect3}

\subsection{Overview of the argument, presentation, and organization}
\label{sect30}

Traditionally contraction mapping arguments for the existence of
unique solutions of differential equation decide on the
time interval (on which a specific map is a contraction)
at the end of the argument.
Due to the delicacy of our argument, and several expert readers
misunderstanding an earlier presentation of our argument in the
traditional form, we decided to reorganize our proof.
The main objective is to prevent any suspicion of a circular
argument, and to clearly delineate the strategy of constructing
a solution of a PDE in a space of measures from a solution of
a scalar nonlinear ODE with Borel measures as parameters.
The crux is to deal with {\em large} point masses exiting
from the system (outflux) at a-priori unknown times which
coincide with discontinuities of the velocity.

\subsection{Existence of Unique Short Time Solutions}
\label{secxi}
In this subsection, we first prove local existence of unique solutions
of a related scalar ordinary differential equation using a contraction
mapping argument.
For any fixed $\rho_0\in {\cal M}^+([0,1))$
and $\mu\in {\cal M}^+((0,T])$ consider the
Cauchy problem.
\begin{equation}
\dot{\xi}(t) = \alpha (\mu((0,t]) + \rho_0([0,1-\xi(t))))
\;\mbox{ for }\; 0\leq t\leq 1, \;\mbox{ together with }\;\xi(0)=0.
\end{equation}

While still depending on the influx $\mu$,
the key difference is that this is a scalar ordinary
differential equation with the single fixed initial measure
$\rho_0\in {\cal M}^+([0,1))$ as a parameter,
rather than a hyperbolic conservation law for measures.
(For small times the measure $\mu$ drops out in the contraction mapping argument.)
However, this formulation facilitates handling the discontinuities
of the velocity $\alpha (\mu((0,t]) + \rho_0([0,1-\xi(t))))$.

A key insight is that the contraction mapping argument,
can accommodate even an infinite number of discontinuities
of the time-varying vector field $v$, which coincide
with the times when point masses exit from the system.
However, one needs to {\em restart} the argument at times
when the velocity $v$ has {\em large} discontinuities,
caused by large point masses entering of exiting the system.
Critical is that the number of {\em large} point masses is
finite. A main technical issue is that it is not a-priori
known when these will leave the system -- i.e., the domain
of the set of curves to which one wants to apply a contraction
mapping principle is unknown until after it has been used
(on a system with modified parameters).
\\

\textbf{The Hypotheses for the Contraction Mappings}\\

Let $T>0$, and $\alpha\colon [0,\infty)\mapsto (0,1]$ be a strictly
decreasing  Lipschitz continuous function with $\alpha(0)=1$ and
fix a specific Lipschitz constant $L>0$ for $\alpha$.
Let $\rho_0 \in \mathcal{M}^{+}([0,1))$ and
$\mu \in \mathcal{M}^{+}((0,T])$ be arbitrary but fixed measures.
Fix
\begin{equation}
\label{vmindef}
v_{\rm min} = \alpha (1+\rho_0([0,1))+\mu([0,T]))>0.
\end{equation}
Denote the refined Lebesgue decomposition of
the initial condition $\rho_0$ and the influx $\mu$ by
\begin{equation}
\rho_0 = \rho_{0,ac}+\rho_{0,pp} \;\mbox{ and }\;
\mu = \mu_{ac}+\mu_{pp}.
\end{equation}
There exist at most countably many $m_i,\;M_j>0$,
and pairwise distinct $x_i\in [0,1)$ and
pairwise distinct $t_j\in (0,T]$ such that
\begin{equation}
\rho_{0,pp}= \sum\limits_{i} m_i \delta_{x_i},   \;\mbox{ and }\;
\mu_{pp}= \sum\limits_{j} M_j \delta_{t_j}.
\end{equation}
Since the measures $\rho_0$ and $\mu$ are bounded,
there exist $N_1,\,N_2\in \mathbb{N}$ such that
\begin{equation}
\label{largemass}
\sum\limits_{i> N_1} m_i < \frac{v_{\rm min}}{4L} \;\mbox{ and }\;
\sum\limits_{j> N_2} M_j < \frac{v_{\rm min}}{4L}.
\end{equation}
Without loss of generality, after possible renumbering,
we may assume that $x_{i+1}<x_i$ for all $i\leq N_1$ and
$t_j<t_{j+1}$ for all $j\leq N_2$ (the natural orderings in which
the corresponding point masses will exit the system, if they do).
Henceforth we refer to the masses  $m_i,\,i \leq N_1$ and
$M_j,\,j \leq N_2$ as {\em large masses}.\\

Due to absolute continuity, there exists $t_{00}\in (0,1)$
such that for every interval
$I\subseteq[0,1)$ of length less than $t_{00}$,
and every interval $J\subseteq [0,T]$ of length less than
$t_{00}/v_{\min}$
\begin{equation}
\label{tau0def}
\rho_{0,ac}(I) < \frac{v_{\rm min}}{4L}
\;\mbox{ and }\;
\mu_{ac}(J) < \frac{v_{\rm min}}{4L}.
\end{equation}
Let $\Omega$ be the set of strictly increasing functions
on $[0,t_{00}]$ that are Lipschitz continuous
with Lipschitz constant bounded above by $1$,
and whose inverses are Lipschitz continuous
with Lipschitz constant no larger than $v_{\rm \min}^{-1}$, that is,
\begin{equation}
\label{Omegadef}
\Omega=\{\eta:[0,t_{00}]\to [0,1] \colon \eta(0)=0,
v_{\rm \min}\leq \frac{\eta(s)-\eta(t)}{s-t}\leq 1 \text{ for all } 0\leq s<t\leq t_{00} \}.
\end{equation}
Every $\eta \in \Omega$ is absolutely continuous,
and differentiable almost everywhere.
The following properties of the set $\Omega$ are easy to prove \cite{XGthesis}.
\begin{lemma}
The set $\Omega$ defined as in \eqref{Omegadef} is closed under
taking the maximum or minimum of two curves.
That is, for every two functions $\eta_1$, $\eta_2$ in $\Omega$,
$\hat{\eta}(t) = \max\{\eta_1(t), \eta_2(t)\}$
and $\check{\eta}(t)=\min\{\eta_1(t), \eta_2(t)\}$ are in $\Omega$.
\end{lemma}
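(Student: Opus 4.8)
The plan is to verify directly that $\hat\eta(t)=\max\{\eta_1(t),\eta_2(t)\}$ satisfies the three defining conditions of $\Omega$; the argument for $\check\eta$ is entirely symmetric, so I would state it once and remark that the minimum case follows by the same reasoning (or by noting $\check\eta = -\max\{-\eta_1,-\eta_2\}$ after shifting, though the cleanest route is just to repeat the estimate). First, $\hat\eta(0)=\max\{\eta_1(0),\eta_2(0)\}=\max\{0,0\}=0$, so the initial condition holds. Next I would check that $\hat\eta$ maps into $[0,1]$: since each $\eta_k(t)\in[0,1]$, the pointwise maximum also lies in $[0,1]$.

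The substance is the two-sided difference quotient bound $v_{\min}\le \frac{\hat\eta(s)-\hat\eta(t)}{s-t}\le 1$ for $0\le t<s\le t_{00}$. I would fix such $s>t$ and, without loss of generality (relabeling $\eta_1,\eta_2$), assume $\hat\eta(s)=\eta_1(s)$. Then for the lower bound,
\begin{equation}
\hat\eta(s)-\hat\eta(t)=\eta_1(s)-\hat\eta(t)\ge \eta_1(s)-\eta_1(t)\ge v_{\min}(s-t),
\end{equation}
where the first inequality uses $\hat\eta(t)\ge\eta_1(t)$ and the second is the defining lower bound for $\eta_1\in\Omega$. For the upper bound, pick the index $k$ achieving $\hat\eta(t)=\eta_k(t)$; then
\begin{equation}
\hat\eta(s)-\hat\eta(t)=\eta_1(s)-\eta_k(t)\le \eta_k(s)-\eta_k(t)\le (s-t),
\end{equation}
using $\eta_1(s)\le\hat\eta(s)$... wait, that goes the wrong way, so instead I use $\eta_1(s)=\hat\eta(s)\ge\eta_k(s)$ is not what I want either. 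The correct manipulation: $\hat\eta(s)-\hat\eta(t) = \hat\eta(s) - \eta_k(t) \le \hat\eta(s) - \big(\hat\eta(s) - \max_\ell(\eta_\ell(s)-\eta_\ell(t))\big)$ is awkward. The clean statement is: for \emph{any} index $\ell$, $\hat\eta(s)\ge\eta_\ell(s)$, hence $\hat\eta(s)-\hat\eta(t)\le \hat\eta(s)-\eta_k(t)$ and also $\hat\eta(s) = \eta_1(s)$, so $\hat\eta(s)-\hat\eta(t) = \eta_1(s)-\eta_k(t)$; now if $k=1$ this is $\le (s-t)$ directly, and if $k=2$ then $\eta_1(t)\le\eta_2(t)$ forces $\eta_1(s)-\eta_2(t)\le \eta_1(s)-\eta_1(t)\le (s-t)$. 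So in all cases the upper bound holds.

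Strict monotonicity and the Lipschitz/bi-Lipschitz statements are then immediate consequences of the two-sided difference-quotient bound with constants $v_{\min}>0$ and $1$: positivity of the lower difference quotient gives strict increase, the upper bound gives Lipschitz constant $\le 1$, and the lower bound $\frac{\hat\eta(s)-\hat\eta(t)}{s-t}\ge v_{\min}$ rearranges to $\frac{s-t}{\hat\eta(s)-\hat\eta(t)}\le v_{\min}^{-1}$, i.e. the inverse is Lipschitz with constant $\le v_{\min}^{-1}$. I do not expect any real obstacle here; the only point requiring a moment's care is the case analysis in the upper bound, where one must track which of $\eta_1,\eta_2$ attains the max at the two endpoints $s$ and $t$ separately rather than assuming it is the same index at both.
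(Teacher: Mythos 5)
Your overall strategy---checking the defining two-sided difference-quotient bound directly, with a case analysis on which curve attains the extremum at each endpoint---is the natural one here (the paper itself omits the proof, merely citing the thesis and calling the property easy), and your upper-bound argument is correct. However, the lower-bound chain as written fails: from $\hat{\eta}(t)\ge\eta_1(t)$ one can only conclude $\eta_1(s)-\hat{\eta}(t)\le\eta_1(s)-\eta_1(t)$, not $\ge$, so your first inequality is reversed. It holds only in the special case where $\eta_1$ attains the maximum at $t$ as well as at $s$, which is exactly the coincidence you warn against in your closing remark; in the remaining case $\hat{\eta}(t)=\eta_2(t)>\eta_1(t)$ the displayed chain is simply false as a step (the final conclusion is still true, but not by that route).

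The repair is the mirror image of your upper-bound argument: for the lower bound pick the index $k$ attaining the maximum at the \emph{smaller} time, $\hat{\eta}(t)=\eta_k(t)$. Then $\hat{\eta}(s)\ge\eta_k(s)$ gives
\begin{equation}
\hat{\eta}(s)-\hat{\eta}(t)\;\ge\;\eta_k(s)-\eta_k(t)\;\ge\;v_{\rm min}\,(s-t),
\end{equation}
with no need for the normalization $\hat{\eta}(s)=\eta_1(s)$ at all. (For $\check{\eta}$ the roles swap: the lower bound uses the index attaining the minimum at $s$, the upper bound the index attaining the minimum at $t$.) With that one-line correction the proof is complete; the remaining checks---$\hat{\eta}(0)=0$, values in $[0,1]$, and deducing strict monotonicity and the Lipschitz bounds for the curve and its inverse from the two-sided difference-quotient estimate---are fine as you wrote them.
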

\begin{lemma}
For every fixed
$\rho_0 \in \mathcal{M}^{+}([0,1))$ and  $\mu \in \mathcal{M}^{+}((0,T])$,
the metric space $(\Omega, \|\cdot\|_{\infty})$ with $\Omega$
defined in  \eqref{Omegadef} is complete as a subspace of $C^0([0,t_{00}])$
with the supremum norm defined by
$\|\eta\|_{\infty}:=\sup_{t\in [0,t_{00}]} |\eta(t)|$.
\end{lemma}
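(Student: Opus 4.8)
The plan is to show that $(\Omega,\|\cdot\|_\infty)$ is closed in $C^0([0,t_{00}])$, since a closed subset of a complete metric space is itself complete. First I would recall that $C^0([0,t_{00}])$ equipped with $\|\cdot\|_\infty$ is complete, and that uniform limits of continuous functions are continuous, so a Cauchy sequence $(\eta_n)$ in $\Omega$ converges uniformly to some $\eta\in C^0([0,t_{00}])$; it remains only to verify that $\eta$ satisfies the three defining constraints of $\Omega$ in~\eqref{Omegadef}.

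Next I would check each constraint by passing to the pointwise limit, which is legitimate because uniform convergence implies pointwise convergence. The condition $\eta(0)=0$ follows immediately from $\eta_n(0)=0$ for all $n$. For the two-sided slope bound, fix any $0\le s<t\le t_{00}$; for each $n$ we have
\begin{equation}
v_{\rm min}\le \frac{\eta_n(s)-\eta_n(t)}{s-t}\le 1,
\end{equation}
and letting $n\to\infty$ the difference quotient converges to $(\eta(s)-\eta(t))/(s-t)$, so the same inequalities hold for $\eta$. In particular the upper slope bound forces $\eta(t)\le t\le t_{00}\le 1$ and the lower bound together with $\eta(0)=0$ forces $\eta(t)\ge v_{\rm min}\,t\ge 0$, so $\eta$ indeed maps $[0,t_{00}]$ into $[0,1]$; hence $\eta\in\Omega$.

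Finally I would note that this argument shows $\Omega$ is sequentially closed, which in a metric space is the same as closed, completing the proof. I do not expect any serious obstacle here: the only mild subtlety is making sure the limit curve still lands in $[0,1]$ (rather than merely in $\mathbb{R}$), but this is handled automatically by the slope bounds as just described, and the fact that finite-valued difference quotients pass cleanly to the limit under uniform — indeed even pointwise — convergence.
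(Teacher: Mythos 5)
Your proof is correct: closedness of $\Omega$ under uniform limits (checking $\eta(0)=0$, passing the two-sided difference-quotient bounds to the limit, and noting the slope bounds force the range into $[0,1]$) combined with completeness of $C^0([0,t_{00}])$ is exactly the expected argument. The paper itself omits the proof, deferring it to the first author's thesis, and your argument is the standard one that fills that gap with no issues.
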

\begin{lemma}
For fixed
$\rho_0 \in \mathcal{M}^{+}([0,1))$ and  $\mu \in \mathcal{M}^{+}((0,T])$
and $v_{\rm min}$, $0<t_{00}\leq 1$, and $\Omega$ as above,
define the map $F: \Omega \to C([0, t_{00}])$ by
\begin{equation}\label{p1eqn3}
F(\eta)(t)=\int_{0}^{t} \alpha \left(\rho_0([0, 1-\eta(s)))+\mu((0,s])\right)ds.
\end{equation}
Then for every $\eta\in \Omega$, $F(\eta)\in \Omega$.
\end{lemma}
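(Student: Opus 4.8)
We need to verify the three defining properties of $\Omega$: (i) $F(\eta)(0) = 0$; (ii) the lower bound $F(\eta)(s) - F(\eta)(t) \geq v_{\min}(s-t)$ for $t < s$; and (iii) the upper bound $F(\eta)(s) - F(\eta)(t) \leq s - t$. Property (i) is immediate from the definition (integral over $[0,0]$). For (iii), observe that the integrand $\alpha(\rho_0([0,1-\eta(s)))+\mu((0,s]))$ takes values in $(0,1]$ since $\alpha\colon[0,\infty)\to(0,1]$; hence $F(\eta)(s) - F(\eta)(t) = \int_t^s \alpha(\cdots)\,ds' \leq s-t$. For the lower bound (ii), I would bound the argument of $\alpha$ from above: for $s \in [0,t_{00}] \subseteq [0,1]$ we have $\rho_0([0,1-\eta(s))) \leq \rho_0([0,1))$ and $\mu((0,s]) \leq \mu([0,T])$, so the argument is at most $\rho_0([0,1)) + \mu([0,T]) < 1 + \rho_0([0,1)) + \mu([0,T])$. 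Since $\alpha$ is strictly decreasing, $\alpha(\rho_0([0,1-\eta(s)))+\mu((0,s])) \geq \alpha(1+\rho_0([0,1))+\mu([0,T])) = v_{\min}$. Integrating gives $F(\eta)(s) - F(\eta)(t) \geq v_{\min}(s-t)$.

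Strictly speaking, before integrating one must check that the integrand $s \mapsto \alpha(\rho_0([0,1-\eta(s)))+\mu((0,s]))$ is Lebesgue measurable, so that $F(\eta)$ is well-defined as a function. The map $s \mapsto \eta(s)$ is continuous (indeed Lipschitz), $s \mapsto 1-\eta(s)$ is monotone, and $x \mapsto \rho_0([0,x))$ is monotone in $x$; the composition of monotone functions is Borel measurable, as is $s\mapsto \mu((0,s])$ (monotone). Their sum is measurable, and $\alpha$ is continuous, so the composition with $\alpha$ is measurable. Boundedness of the integrand by $1$ on the finite interval $[0,t_{00}]$ then guarantees integrability, so $F(\eta)$ is a well-defined absolutely continuous (in fact Lipschitz) function.

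Finally one notes that the bounds established in (ii) and (iii) already show $F(\eta)$ is strictly increasing (from the strict positivity $v_{\min} > 0$) and that $F(\eta)(t) \in [0,1]$ for $t \in [0,t_{00}]$, since $0 \leq F(\eta)(t) \leq t \leq t_{00} \leq 1$; this confirms $F$ indeed maps into $C([0,t_{00}])$ with range in $[0,1]$, matching the codomain in the definition of $\Omega$. I do not expect any genuine obstacle here: the only mild subtlety is the measurability verification, which is why the excerpt's Subsection~\ref{sect22} was careful to record that compositions of monotone (hence Borel measurable) functions stay Borel measurable. Everything else is a direct consequence of the monotonicity of $\alpha$, the range $(0,1]$ of $\alpha$, and the choice of $v_{\min}$ in~\eqref{vmindef}.
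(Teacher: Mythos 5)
Your proof is correct and follows essentially the same route as the paper's: check $F(\eta)(0)=0$ and sandwich the integrand between $v_{\rm min}=\alpha(1+\rho_0([0,1))+\mu([0,T]))$ and $1$ using the monotonicity of $\alpha$, then integrate to bound the difference quotients. The extra measurability remark (compositions of monotone, hence Borel measurable, functions) is a welcome bit of care that the paper relegates to its preliminary subsection rather than the proof itself, but it does not change the argument.
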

\begin{proof}
Let $\rho_0 \in \mathcal{M}^{+}([0,1))$ and  $\mu \in \mathcal{M}^{+}((0,T])$
be arbitrary but fixed,
and let $v_{\rm min}$, $0<t_{00}\leq 1$, $\Omega$, and $F$ be as above.
Clearly, for every $\eta\in \Omega$, $F(\eta)(0)=0$.
Note for every $\bar{s}\in [0,T]$,
\begin{equation}
v_{\rm min}\leq\alpha(\rho_0([0,1))+\mu([0,T]))\leq
\alpha \left(\rho_0([0, 1-\eta(\bar{s})))+\mu((0,\bar{s}])\right) \leq 1,
\end{equation}
and thus, for all $s\neq t \in [0,t_{00}]$
\begin{equation}
v_{\rm min} \leq \frac{F(\eta)(s)-F(\eta)(t)}{s-t} \leq 1.
\end{equation}
\end{proof}
Next we show that
for any fixed measures $\rho_0\in \mathcal{M}^{+}([0,1))$
and $\mu \in \mathcal{M}^{+}((0,t_{00}])$,
whose singular continuous parts are zero,
there exists a time $\tau\in (0, t_{00}]$
and a unique Lipschitz continuous function $\xi\colon [0,\tau]\mapsto [0,1)$
such that for every $t\in [0,\tau]$,
\begin{equation}
\label{eq2contract}
\xi (t)=\int_0^t\alpha(\mu ((0,s])+\rho ([0,1-\xi (t))))\,ds.
\end{equation}
The proof, and this equation only involve a fixed measure $\rho_0$,
no mention of a curve of measures $t\mapsto \rho_t$.
In general, $\xi$ is constructed as the restriction of a curve
$\tilde\xi\in \Omega$ to a shorter time interval
$[0,\tau]\subseteq [0,t_{00}]$, and thus it inherits
the bi-Lipschitzness properties from $\tilde\xi\in \Omega$.
\begin{theorem}
\label{disthm1}
For every $\mu \in \mathcal{M}^{+}((0,T])$
and $\rho_0 \in \mathcal{M}^{+}([0,1))$
there exists a unique characteristic curve
$\xi: [0,\tau] \subseteq [0, t_{00}] \to [0,1)$
that satisfies the integral equation~\eqref{eq2contract}.
\end{theorem}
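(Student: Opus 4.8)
The plan is a Banach fixed point argument for (a restriction of) the map $F$ of \eqref{p1eqn3}. Reading \eqref{eq2contract} as the fixed-point equation $\xi=F(\xi)$ — equivalently, the integrated form of the Cauchy problem $\dot\xi(t)=\alpha\!\left(\mu((0,t])+\rho_0([0,1-\xi(t)))\right)$, $\xi(0)=0$ — I would fix $\tau\in(0,t_{00}]$ and work in $\Omega_\tau:=\{\eta|_{[0,\tau]}\colon\eta\in\Omega\}$ with the norm $\|\cdot\|_{\infty,[0,\tau]}$. The arguments of the lemmas preceding the theorem apply verbatim on $[0,\tau]$: $\Omega_\tau$ is a closed subset of $C^0([0,\tau])$, hence complete, and, since $F(\eta)(t)$ for $t\le\tau$ depends only on $\eta|_{[0,\tau]}$, the computation in the third lemma above gives $F(\Omega_\tau)\subseteq\Omega_\tau$. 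A fixed point $\xi$ of $F$ in $\Omega_\tau$ is exactly a solution of \eqref{eq2contract} on $[0,\tau]$, and it satisfies $\xi(t)\le t\le\tau\le t_{00}<1$, so $\xi\colon[0,\tau]\to[0,1)$ as required; conversely every solution of \eqref{eq2contract} on $[0,\tau]$ is $1$-Lipschitz (since $\alpha\le1$), has difference quotients bounded below by $v_{\rm min}$ (its argument of $\alpha$ never exceeds $1+\rho_0([0,1))+\mu([0,T])$, cf.\ \eqref{vmindef}), and vanishes at $0$, hence lies in $\Omega_\tau$. Thus \emph{both} existence and uniqueness reduce to making $F$ a contraction on $\Omega_\tau$ for a suitable $\tau$.

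For the contraction estimate, let $\eta_1,\eta_2\in\Omega_\tau$, let $t\le\tau$, and write $I_s$ for the interval with endpoints $1-\eta_1(s)$ and $1-\eta_2(s)$. Subtracting and using the Lipschitz bound on $\alpha$ — the $\mu((0,s])$ terms cancel, which is exactly why even large point masses of $\mu$ entering at small times do no harm — gives
\[
|F(\eta_1)(t)-F(\eta_2)(t)|\;\le\;L\int_0^t\rho_0(I_s)\,ds .
\]
Since $\eta_i(s)\le s\le\tau$, every $I_s$ is contained in $[1-\tau,1)$, and by Tonelli's theorem
\[
\int_0^t\rho_0(I_s)\,ds=\int_{[1-\tau,1)}\bigl|\{\,s\in[0,t]\colon x\in I_s\,\}\bigr|\,d\rho_0(x) .
\]
Because every element of $\Omega$ is strictly increasing with inverse that is $v_{\rm min}^{-1}$-Lipschitz (see \eqref{Omegadef}), each set $\{\,s\colon x\in I_s\,\}$ is an interval of length at most $v_{\rm min}^{-1}\|\eta_1-\eta_2\|_{\infty,[0,\tau]}$, whence
\[
\|F(\eta_1)-F(\eta_2)\|_{\infty,[0,\tau]}\;\le\;\frac{L}{v_{\rm min}}\,\rho_0\bigl([1-\tau,1)\bigr)\,\|\eta_1-\eta_2\|_{\infty,[0,\tau]} .
\]
Since $\rho_0$ is finite and $\bigcap_{\tau>0}[1-\tau,1)=\emptyset$, continuity from above gives $\rho_0([1-\tau,1))\to0$ as $\tau\downarrow0$; fixing $\tau\in(0,t_{00}]$ with $L\,v_{\rm min}^{-1}\rho_0([1-\tau,1))<1$ makes $F$ a contraction on $\Omega_\tau$, and the Banach fixed point theorem finishes the proof.

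I expect the contraction estimate in the presence of atoms to be the crux. The naive pointwise bound $|\rho_0([0,1-\eta_1(s)))-\rho_0([0,1-\eta_2(s)))|\lesssim\|\eta_1-\eta_2\|_{\infty}$ fails once $\rho_0$ carries point masses, so one must (i) use the cancellation of the $\mu$-dependence and (ii) replace the pointwise-in-$s$ estimate by the Tonelli/bi-Lipschitz argument above — which is precisely where the built-in bi-Lipschitz bounds of $\Omega$, and the choice of the half-open domain $[0,1)$ that keeps the relevant region $[1-\tau,1)$ of small $\rho_0$-mass, earn their keep. Note that the finer data $t_{00}$, $N_1$, $N_2$ and the estimates \eqref{largemass}--\eqref{tau0def} play no role in this short-time statement; they become relevant only when this building block is iterated — with restarts at the finitely many large masses — toward a solution on a larger time interval.
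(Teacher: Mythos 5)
Your proposal is correct for the statement as written, but it reaches the short-time contraction by a genuinely different device than the paper. The core estimate is the same in both arguments: the $\mu((0,s])$ terms cancel, the remaining difference is rewritten by changing the order of integration over the region between the two bi-Lipschitz curves, and the inverse-Lipschitz bound $v_{\rm min}^{-1}$ converts the vertical offset $\|\eta_1-\eta_2\|_\infty$ into a horizontal one, giving a contraction factor of the form $\tfrac{L}{v_{\rm min}}\rho_0([1-\hat\eta(t),1))$. Where you diverge is in how this factor is made small in the presence of large atoms near $x=1$: you shrink the time horizon, using that $[0,1)$ is half-open so $\rho_0([1-\tau,1))\to 0$ as $\tau\downarrow 0$ by continuity from above, and you correctly observe that any solution of \eqref{eq2contract} automatically lies in $\Omega_\tau$, so Banach gives both existence and uniqueness on $[0,\tau]$. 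The paper instead keeps the full interval $[0,t_{00}]$: it replaces $\rho_0$ by $\tilde\rho_0$ in which the finitely many large atoms are relocated to $x=0$, obtains a contraction with the uniform constant $\tfrac12$ there, and then defines $\tau_1$ as either $t_{00}$ or the exactly computable time at which the first large mass of the original datum would reach $x=1$ (the two characteristics coincide up to that time because the velocity depends only on the total load). What the paper's extra work buys is precisely what your choice of $\tau$ does not provide: a lower bound on the admissible step length expressed through $t_{00}$ and the gaps between the finitely many large masses, which is what subsection~\ref{LargeTimes} needs to cover $[0,T]$ with finitely many restarts. With your $\tau$, which degenerates as an atom of mass $\gtrsim v_{\rm min}/L$ approaches $x=1$, the iterated construction could produce step lengths shrinking geometrically and accumulate at a time strictly before that atom exits, so it could not by itself yield the global solution; for the local theorem as stated, however, your argument is complete, and arguably more economical.
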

The general strategy of the proof is to modify and apply
the contraction mapping theorem, similar to~\cite{CKW2010}.
However, a naive argument breaks down over time intervals in
which large point masses exit from the system.
Thus we carefully demonstrate that the usual map is a contraction
over intervals during which no large point masses exit from the
system, and then restart the argument after a large mass has left
the system.

A technical issue is that it is not a priori known when the large
masses exit from the system at $x=1$.
This can be overcome by a nice little trick:
Replace the initial datum $\rho_0$ by a modified $\tilde\rho_0$
for which contractions can be established over a larger time
interval, and whose characteristic curves agree with those for
the original datum $\rho_0$ until the exactly computable time
when the first large mass would have exited.
Since there is only a finite number of large masses, one can
guarantee that there is a positive lower bound for the lengths
of the time intervals on which no large mass exits from the system.
Such a lower bound can be easily calculated in terms of lower
and upper bounds of the velocity $\alpha$, and
$\min \{ x_{i-1}-x_i\colon  i\leq N_1\}$ and
$\min \{ t_j-t_{j-1}\colon  i\leq N_2\}$
where we conveniently added $x_0=1$ and $t_0=0$
to the sets of $x_i$ and $t_j$ defined below in~\eqref{largemass}.
This uniform lower bound is essential in the next subsection
to guarantee a solution over
the whole interval $[0,T]$ by using only finitely many restarts.

\begin{proof}
Let $\rho_0 \in \mathcal{M}^{+}([0,1))$ and  $\mu \in \mathcal{M}^{+}((0,T])$
be arbitrary but fixed,
and let $v_{\rm min}$, $0<t_{00}\leq 1$, $\Omega$, and $F$ as
in~\eqref{vmindef},~\eqref{tau0def},~\eqref{Omegadef}, and~\eqref{p1eqn3}.
Introduce a modification $\tilde{\rho}_0$
of the initial condition $\rho_0$ such that no large masses will
leave the system in the time interval $[0,1)$. Define the new
initial condition which agrees mostly with $\rho_0$, except that
all $N_1$ large masses have been moved to $x=0$
\begin{equation}
\tilde\rho_{0,pp} = \left(\sum\limits_{i=1}^{N_1}m_i\right) \delta_{0}
                    + \sum\limits_{i>N_1} m_i\delta_{x_i}
\;\mbox{ and }\;
\tilde\rho_0 = \rho_{0,ac}+\tilde\rho_{0,pp}.
\end{equation}
Since the velocity $\alpha(W(t))$ only depends on the total load at time $t$,
the characteristic curves $\xi$ and $\tilde{\xi}$ corresponding to
initial conditions $\rho_0$ and $\tilde\rho_0$
coincide over a small time interval until $[0,\tilde\tau_1]$ defined
by $\tilde{\xi}(\tilde\tau_1)=1-x_1$ at which time the firts large mass $m_1$
would leave the original system at $x=1$.

For this initial condition $\tilde\rho_0$ (and influx $\mu$)
define $v_{\rm min}$, $t_{00}$, $\Omega$, and $F$ as
in~\eqref{vmindef},~\eqref{tau0def},~\eqref{Omegadef}, and~\eqref{p1eqn3}.
We demonstrate existence and uniqueness of a corresponding
characteristic curve $\tilde\xi$ over the time interval $[0,t_{00}]$.
For arbitrary but fixed  $\eta_1, \eta_2 \in \Omega$,
we will show that
\begin{equation}
\|F(\eta_1)-F(\eta_2)\|_{\infty} \leq \frac{1}{2} \|\eta_1-\eta_2\|_{\infty}.
\end{equation}
Since the velocity $\alpha$ is a Lipschitz continuous function
with Lipschitz constant $L$, for every fixed $t\in [0,t_{00}]$,
we have
\begin{eqnarray}
\nonumber\lefteqn{|F(\eta_1)(t) -F(\eta_2)(t)| = }
\\ \nonumber
&= &\left|\int_{0}^{t} \alpha \left(\tilde\rho_0([0, 1-\eta_1(s)))+\mu((0,s])\right)\,ds- \int_{0}^{t} \alpha \left(\tilde\rho_0([0, 1-\eta_2(s)))+\mu((0,s])\right)\,ds\right|
\\ \nonumber
&\leq &  \int_{0}^{t} \left| \rule{0mm}{3.5mm} \alpha \left(\tilde\rho_0([0, 1-\eta_1(s)))+\mu((0,s])\right)-\alpha \left(\tilde\rho_0([0, 1-\eta_2(s)))+\mu((0,s])\right)\right|\,ds
\\
&\leq & L \int_{0}^{t} \left| \rule{0mm}{3.5mm} \left(\tilde\rho_0([0, 1-\eta_1(s)))+\mu((0,s])\right)- \left(\tilde\rho_0([0, 1-\eta_2(s)))+\mu((0,s])\right)\right|\,ds
\\ \nonumber
&=& L \int_{0}^{t} \left|  \rule{0mm}{3.5mm} \tilde\rho_0([0, 1-\eta_1(s)))- \tilde\rho_0([0, 1-\eta_2(s)))\right|\,ds.
\end{eqnarray}
Choosing $\hat{\eta}(t)=\max\{\eta_1(t), \eta_2(t)\}$ and $\check{\eta}(t)=\min\{\eta_1(t), \eta_2(t)\}$,
rewrite the last expression as a double integral
\begin{eqnarray}
\lefteqn{
L \int_{0}^{t} \left|\tilde\rho_0([0, 1-\eta_1(s)))- \tilde\rho_0([0, 1-\eta_2(s)))\right|\,ds=
}\\
\nonumber
& = & L \int_{0}^{t} \left|\int_{[1-\hat{\eta}(s), 1-\check{\eta}(s))}1 \,d\tilde\rho_0(x_0)\right|\,ds
= L \int_0^t \int_{[1-\hat{\eta}(s), 1-\check{\eta}(s))}1 \,d\tilde\rho_0(x_0)\,ds.
\end{eqnarray}
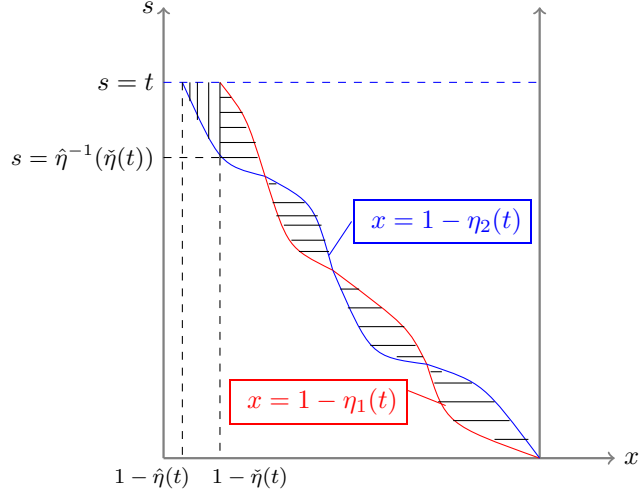
\begin{figure}[htb]
\label{xover}
\centering
\begin{tikzpicture}[scale=5.0]
\draw[help lines, line width=0.30mm, ->] (0,0)--(1.2,0) coordinate (xaxis);
\draw[help lines, line width=0.30mm, ->] (0,0)--(0,1.2) coordinate (taxis);
\draw[help lines, line width=0.30mm, ->] (1.0,0)--(1.0,1.2);
\draw[blue,dashed] (0,1.0)--(1.0,1.0);
\draw[red] (1.0, 0) .. controls (0.75,0.09) .. (0.70, 0.25);
\draw[blue] (1.0, 0) .. controls (0.85,0.20) .. (0.70, 0.25);
\draw[red] (0.70, 0.25) .. controls (0.65,0.35) .. (0.45, 0.50);
\draw[blue] (0.70, 0.25) .. controls (0.55,0.27) .. (0.45, 0.50);
\draw[blue] (0.45, 0.50) .. controls (0.40,0.68) .. (0.27, 0.75);
\draw[red] (0.45, 0.50) .. controls (0.33,0.56) .. (0.27, 0.75);
\draw[blue] (0.27, 0.75) .. controls (0.15,0.78) .. (0.05, 1.00);
\draw[red] (0.27, 0.75) .. controls (0.22,0.91) .. (0.15, 1.00);
\draw(0.15,1.00) -- (0.15, 0.80);
\draw(0.12,1.00) -- (0.12, 0.85);
\draw(0.09,1.00) -- (0.09, 0.90);
\draw(0.07,1.00) -- (0.07, 0.95);

\draw(0.15,0.96) -- (0.18, 0.96);
\draw(0.15,0.92) -- (0.21, 0.92);
\draw(0.15,0.88) -- (0.22, 0.88);
\draw(0.15,0.80) -- (0.25, 0.80);
\draw(0.15,0.84) -- (0.245, 0.84);

\draw(0.28,0.73) -- (0.30, 0.73);
\draw(0.29,0.68) -- (0.375, 0.68);
\draw(0.30,0.645) -- (0.41, 0.645);
\draw(0.36,0.55) -- (0.44, 0.55);
\draw(0.32,0.62) -- (0.42, 0.62);
\draw(0.34,0.58) -- (0.43, 0.58);

\draw(0.47,0.45) -- (0.52, 0.45);
\draw(0.49,0.40) -- (0.58, 0.40);
\draw(0.52,0.35) -- (0.64, 0.35);
\draw(0.55,0.30) -- (0.67, 0.30);
\draw(0.62,0.27) -- (0.69, 0.27);

\draw(0.71,0.23) -- (0.74, 0.23);
\draw(0.73,0.15) -- (0.88, 0.15);
\draw(0.77,0.10) -- (0.92, 0.10);
\draw(0.88,0.05) -- (0.97, 0.05);
\draw(0.72,0.20) -- (0.82, 0.20);

\draw[dashed] (0,0.80) -- (0.15, 0.80);
\node[left] at (0,0.80) {\small{$s = \hat{\eta}^{-1}(\check{\eta}(t))$}};
\node[below] at (-0.03,0.0) {\footnotesize{$1-\hat{\eta}(t)$}};
\node[below] at (0.23,0.0) {\footnotesize{$1-\check{\eta}(t)$}};
%
%

\draw[dashed](0.15,0.80) -- (0.15, 0);
\draw[dashed] (0.05, 1.00) -- (0.05, 0);
\node[left]at (taxis) {$s$};
\node[right] at (xaxis) {$x$};
\node[left] at (0,1.0) {$s=t$};
\node[right, blue] at (0.48,0.63) {\makebox{\color{blue}
\scalebox{1.0}{\framebox{
   \bf $x=1-\eta_2(t)$}}}};
\draw[blue] (0.50,0.63) -- (0.44,0.54);
\node[right, blue] at (0.15,0.15) {\makebox{\color{red}
\scalebox{1.0}{\framebox{
   \bf $x=1-\eta_1(t)$}}}};
    \draw[red] (0.65,0.18) -- (0.75,0.14);
\end{tikzpicture}
\caption{Change the Order of Integration}
\end{figure}

Since the regions are bounded by bi-Lipschitz curves, we may
change the order of integration as illustrated in figure \ref{xover}
(compare also figure 1 in \cite{CKW2010})

\begin{align}
\nonumber
L =& \int_0^t \int_{[1-\hat{\eta}(s), 1-\check{\eta}(s))}1 \,d\tilde\rho_0(x_0)\,ds
\\ \nonumber
=& L \left(\int_{[1-\hat{\eta}(t), 1-\check{\eta}(t)]}\int_{[\hat{\eta}^{-1}(1-x),t]}1\,dt \,d\tilde\rho_0(x_0)\right.
+\left. \int_{[1-\check{\eta}(t),1)}\int_{[\check{\eta}^{-1}(1-x),\hat{\eta}^{-1}(1-x))} 1 \,dt \,d\tilde\rho_0(x_0)\right)
\\ \nonumber
\leq& L \left(\int_{[1-\hat{\eta}(t), 1\!-\!\check{\eta}(t)]} (t\!-\!\hat{\eta}^{-1}(1\!-\!x))\,d\tilde\rho_0(x_0)\right.
\left. \int_{[1-\check{\eta}(t),1)} \left(\check{\eta}^{-1}(1\!-\!x)\!-\!\hat{\eta}^{-1}(1\!-\!x)\right)\,d\tilde\rho_0(x_0) \right)
\\
\leq&  L \left(\int_{[1-\hat{\eta}(t), 1\!-\!\check{\eta}(t)]} (\check{\eta}^{-1}(\check{\eta}(t)\!-\!\hat{\eta}^{-1}(\check{\eta}(t)))\,d\tilde\rho_0(x_0)\right.
\\ \nonumber
&+\left. \int_{[1\!-\!\check{\eta}(t),1)} \left(\check{\eta}^{-1}(1\!-\!x)\!-\!\hat{\eta}^{-1}(1-x)\right)\,d\tilde\rho_0(x_0) \right)
\\ \nonumber
\leq & L\left(\tilde\rho_0([1-\hat{\eta}(t), 1))\right) \sup_{0\leq y \leq \check{\eta}(t)}(\check{\eta}^{-1}(y)-\hat{\eta}^{-1}(y)).
\end{align}
To find an upper bound for the last term, use the Lipschitzness
of the curves and their inverses, and simple geometric arguments
relating the vertical offsets of the curves to their horizontal offsets.
By the definition of $\hat{\eta}$, $\check{\eta}$,
for every $y\in [0, \check{\eta}(t)]$,
we have
(compare equation $(23)$ in \cite{CKW2010})
\begin{align}
\nonumber
0 &\leq  \check{\eta}^{-1}(y)-\hat{\eta}^{-1}(y)
\\ \nonumber
&=  \left(\check{\eta}^{-1}(y) - \frac{ \hat{\eta}^{-1}(y)+\check{\eta}^{-1}(y)}{2}\right)
+\left(\frac{ \hat{\eta}^{-1}(y)+\check{\eta}^{-1}(y)}{2}- \hat{\eta}^{-1}(y)\right)
\\
&\leq  \frac{1}{v_{\rm min}} \left(y-\check{\eta} \left(\frac{ \hat{\eta}^{-1}(y)+\check{\eta}^{-1}(y)}{2}\right)\right) + \frac{1}{v_{\rm min}} \left(\hat{\eta}\left(\frac{ \hat{\eta}^{-1}(y)+\check{\eta}^{-1}(y)}{2}\right)-y\right)
\\ \nonumber
&= \frac{1}{v_{\rm min}} \left(\hat{\eta}\left(\frac{ \hat{\eta}^{-1}(y)+\check{\eta}^{-1}(y)}{2}\right)-\check{\eta} \left(\frac{ \hat{\eta}^{-1}(y)+\check{\eta}^{-1}(y)}{2}\right) \right)
\\ \nonumber
&\leq  \frac{1}{v_{\rm min}} \|\eta_1-\eta_2 \|_{\infty}.
\end{align}
Hence,
\begin{equation}
|F(\eta_1)(t)-F(\eta_2)(t)| \leq  \frac{L}{v_{\rm min}} \left(\tilde\rho_0([1-\hat{\eta}(t), 1))\right) \|\eta_1-\eta_2 \|_{\infty}.
\end{equation}
By the choice~\eqref{tau0def} of $t_{00}$,
for the absolutely continuous part $\tilde\rho_{0,ac}=\rho_{0,ac}$,
and for every $t \in [0,t_{00})$,
\begin{equation}
\tilde\rho_{0,ac}([1-\hat{\eta}(t), 1))\leq
\tilde\rho_{0,ac}([1-\hat{\eta}(t_{00}), 1))
\leq\frac{v_{\rm min}}{4L}.
\end{equation}
Note that due to their relocation and $t_{00}<1$, none of the large point masses
in $\tilde\rho_{0,pp}$ have exited during the interval $[0,t_{00}]$.
Formally, since for every $t \in [0,t_{00})$, $0 < 1-\hat{\eta}(t)$,
we conclude that
$\tilde\rho_{0,pp}([1-\hat{\eta}(t), 1))<\frac{v_{\rm min}}{4L}$.
Combining these, we obtain for $t\in [0,t_{00})$
\begin{equation}
\tilde\rho_0([1-\hat{\eta}(t),1))=\tilde\rho_{0,ac}([1-\hat{\eta}(t), 1))
+\tilde\rho_{0,pp}([1-\hat{\eta}(t), 1))<\frac{v_{\rm min}}{2L}.
\end{equation}
Hence $\frac{L}{v_{\rm min}}\tilde\rho_0([1-\hat{\eta}(t),1))<\frac{1}{2}$
showing that $F$ is a contraction on $\Omega$.
By the contraction mapping theorem, with the initial condition $\tilde\rho_0$,
there exists a unique fixed point $\tilde\xi$ in $\Omega$ such that
$\tilde\xi=F(\tilde\xi)$ over the time interval $[0,t_{00}]$.

If $\tilde\xi (t_{00}) < 1-x_1$ (the distance of the first large point mass
of the initial condition from the exit point $x=1$), then define $\xi=\tilde\xi$ on $[0,t_{00}]$
and let $\tau_1=t_{00}$.
On the other hand, if $\tilde\xi (t_{00}) \geq 1-x_1$, then
there exists a unique time $\tau_1\in(0,t_{00}]$ such that
$\tilde\xi (\tau_1) = 1-x_1$.
In this case let $\xi$ be the restriction of $\tilde\xi$
to the interval $[0,\tau_1]$.
\end{proof}

\subsection{Existence of Unique Solutions for Large Times}
\label{LargeTimes}
We start this subsection with a quick verification that
the pushforwards of the measures by the flow~$X$ (to be defined)
preserves their zero singular part.
Recall that for any Borel measurable map
$\gamma\colon S \subseteq \mathbb{R} \mapsto U\subseteq\mathbb{R}$
and any finite Borel measure $\nu \in \mathcal{M}^{+}(S)$,
the pushforward of $\nu$ by $\gamma$ is defined every Borel set
$E\subseteq U$ by
\begin{equation}
\gamma\#\nu(E) =\nu(\gamma^{-1}(E)).
\end{equation}
In the sequel we shall construct maps
\begin{equation}
X \colon \left\{(t, r)\colon 0 \leq r \leq t \leq T\right\} \times [0,\infty)
\mapsto [0, \infty); (t, r, x_0) \mapsto X(t; r, x_0)
\end{equation}
that are monotone and bi-Lipschitz in each of the first two variables,
and for fixed $(t, r) \in [0,T] \times [0, t]$ and every $x_0 \in [0, \infty)$,
$X(t; r, x_0) = X(t; r,0)+x_0$.
For such maps and for $t\in [0,T]$, the pushforward of the initial datum
$\rho_0 \in \mathcal{M}^{+}([0,1))$
by the map $X(t; 0, \cdot)\colon [0,1)\mapsto [0,\infty)$
is defined for every Borel set $E \subseteq X(t;0,\cdot)([0,1))$ by
\begin{equation}
\left(X(t; 0, \cdot)\#\rho_0\right)(E)
= \rho_0\left(X(t; 0, \cdot)^{-1}(E)\right)
=\int_{[0,1)}\chi_{E} \left(X(t; 0, x_0)\right)\,d\rho_0(x_0).
\end{equation}
Similarly, for every $t\in [0,T]$, the pushforward of the control influx
$\mu \in \mathcal{M}^{+}((0,T])$ by the map
$X(t; \cdot, 0) \colon [0, t] \mapsto [0,1)$
is defined for every Borel set  $E\subseteq [0,1)$ by
\begin{equation}
\left(X(t; \cdot,0)\#\mu\right)(E)
= \mu\left(X(t; \cdot, 0)^{-1}(E)\right)
=\int_{[0,1)}\chi_{E} \left(X(t; s, 0)\right)\,d\mu(s).
\end{equation}

\begin{lemma}\label{LemNosc}
Suppose $X(t, r, x_0)$ is as above.
Then for arbitrary Borel measures $\rho_0 \in \mathcal{M}^{+}([0,1))$
and $\mu \in \mathcal{M}^{+}((0, T])$ with zero singular continuous part,
that is, $\rho_{0, sc}=0$ and $\mu_{sc}=0$,
the singular continuous part of the pushed forward measure
\begin{equation}
X(t; 0, \cdot)\#\rho_0 + X(t; \cdot, 0)\#\mu
\end{equation}
also is zero.
\end{lemma}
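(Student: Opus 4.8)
The plan is to reduce the statement about the pushforward of the sum to two separate claims about the pushforwards $X(t;0,\cdot)\#\rho_0$ and $X(t;\cdot,0)\#\mu$, using the elementary fact that the singular continuous part of a sum of two nonnegative finite Borel measures is zero whenever each summand has zero singular continuous part (this follows from uniqueness of the refined Lebesgue decomposition, since $\nu_1+\nu_2 = (\nu_{1,ac}+\nu_{2,ac}) + (\nu_{1,pp}+\nu_{2,pp})$ exhibits a decomposition of the sum into an absolutely continuous piece plus a pure point piece, hence $\nu_{1+2,sc}=0$). So it suffices to treat each pushforward individually.

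For the pure point parts, I would argue that the image of a Dirac mass under a (single-valued) Borel map is again a Dirac mass: $\gamma\#(\delta_a)=\delta_{\gamma(a)}$, and hence $X(t;0,\cdot)\#\rho_{0,pp}$ is a countable sum of point masses, i.e., again pure point. The only subtlety — which the monotonicity of $X$ in the last variable resolves immediately via $X(t;r,x_0)=X(t;r,0)+x_0$ — is that distinct $x_i$ are sent to distinct image points, so the pushed-forward point masses remain well-defined and their total mass is preserved; in any case, even without injectivity a countable sum of Diracs pushes forward to a countable sum of Diracs. The same reasoning applies to $X(t;\cdot,0)\#\mu_{pp}$.

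For the absolutely continuous parts, I would use that $x_0\mapsto X(t;0,x_0)$ is bi-Lipschitz (strictly increasing with Lipschitz constant $\le 1$ and inverse Lipschitz with constant $\le v_{\min}^{-1}$), hence maps Lebesgue-null sets to Lebesgue-null sets (Lipschitz maps on the line do not increase Lebesgue outer measure by more than a constant factor). Therefore, if $N$ is a Lebesgue-null set, so is $X(t;0,\cdot)(N)$, and the pushforward $X(t;0,\cdot)\#\rho_{0,ac}$ vanishes on Lebesgue-null sets, i.e., is absolutely continuous with respect to Lebesgue measure. Concretely, for a Borel set $E$ with $\lambda(E)=0$, the preimage $X(t;0,\cdot)^{-1}(E)$ is Lebesgue-null by the Lipschitz bound on the inverse, so $\rho_{0,ac}$ applied to it is zero. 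The same argument handles $X(t;\cdot,0)\#\mu_{ac}$, using that $s\mapsto X(t;s,0)$ is bi-Lipschitz in its second argument.

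Combining these, each pushforward $X(t;0,\cdot)\#\rho_0 = X(t;0,\cdot)\#\rho_{0,ac} + X(t;0,\cdot)\#\rho_{0,pp}$ is a sum of an absolutely continuous and a pure point measure, hence has zero singular continuous part; likewise for $X(t;\cdot,0)\#\mu$; and then the sum does too, by the first step. The main obstacle is purely bookkeeping: ensuring that "pushforward commutes with the Lebesgue decomposition" is applied correctly, i.e., that $(X\#\nu)_{ac}$ and $(X\#\nu)_{pp}$ really are the pushforwards of $\nu_{ac}$ and $\nu_{pp}$ — which is exactly what the bi-Lipschitz (null-sets to null-sets, and the single-valuedness sending atoms to atoms) property gives. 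No delicate estimates are needed; this lemma is a soft consequence of the bi-Lipschitz regularity of the flow that will be constructed.
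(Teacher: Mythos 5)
Your proof is correct and rests on the same two facts as the paper's: point masses push forward to point masses, and Lebesgue-null sets pull back to Lebesgue-null sets under the maps $x_0\mapsto X(t;0,x_0)$ and $s\mapsto X(t;s,0)$. The only difference is organizational: you decompose $\rho_0$ and $\mu$ first, show each class (ac, pp) is preserved under the pushforward, and invoke uniqueness of the refined Lebesgue decomposition, whereas the paper evaluates the combined pushforward directly on an arbitrary null set with the countably many images of atoms removed, using translation invariance of Lebesgue measure for the spatial map (note $X(t;0,x_0)=x_0+\xi(t)$ is just a translation, so the $v_{\min}^{-1}$ inverse-Lipschitz bound is only needed for the time-variable map $s\mapsto X(t;s,0)$) and absolute continuity of the inverse for the temporal map --- a cosmetic rather than substantive difference.
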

\begin{proof}
%
Consider an arbitrary but fixed $t \in [0, T]$ such that $X(t;0,0)<1$.
Then for every Borel set $E \subset [0,1)$, with Lebesgue measure $\lambda(E)=0$,
let $E_1 = \left\{X(t; 0, x) \colon x \in \mbox{ supp}({\rho_{0, pp}}) \right\}$ and
$E_2 = \left\{X(t; \tau, 0) \colon \tau \in \mbox{ supp}({\mu_{pp}})\right\}$.
Let $\tilde{E}=E\setminus (E_1 \cup E_2)$. It is clear that $\lambda(\tilde{E})=0$.
Furthermore,
\begin{align}
\MoveEqLeft \left(X(t; 0, \cdot)\#\rho_0 + X(t; \cdot, 0)\#\mu\right)(\tilde{E})
            = \left(X(t; 0, \cdot)\# \rho_0\right)(\tilde{E})
            + \left(X(t; \cdot, 0)\#\mu\right)(\tilde{E})
            \nonumber\rule{22mm}{0mm}\\
\rule{8mm}{0mm}&= \rho_0\left( \left\{x\in [0,1) \colon X(t; 0, x) \in \tilde{E}\right\}\right)
   +\mu\left(\left\{\tau\in (0, t] \colon X(t; \tau, 0) \in \tilde{E})\right\}\right).
\end{align}
Let $F_1 = \left\{x\in [0,1) \colon X(t; 0, x) \in \tilde{E}\right\}
= \left\{x\in [0,1) \colon x+\xi(t) \in \tilde{E}\right\}
= \tilde{E}-\xi(t)$.
Since Lebesgue measure is translation invariant, $\lambda(F_1)=0$.
By construction of $\tilde{E}$, $\rho_0(F_1) = \rho_{0, ac}(F_1)+\rho_{0, pp}(F_1)=0+0=0$.
The set $F_2=\left\{\tau\in (0, t] \colon X(t; \tau, 0) \in \tilde{E})\right\}$
is the preimage of the set $\tilde{E}$, which has zero Lebesgue measure,
under the map $X(t; \cdot, 0): (0, t] \mapsto [0, +\infty)$.
Note that the map $X(t; \cdot, 0): (0, t] \mapsto [0, +\infty)$
is monotone and bi-Lipschitz.
Thus both $X(t; \cdot, 0)$ and its inverse are absolutely continuous.
Hence, $\lambda(F_2)=0$.
By construction of $\tilde{E}$ it follows that
$\mu(F_2)= \mu_{ac}(F_2)+\mu_{pp}(F_2)=0+0=0$.
Therefore, $\left(X(t; 0, \cdot)\#\rho_0 + X(t; \cdot, 0)\#\mu\right)(\tilde{E})=0$
which implies that the singular continuous part of $X(t; 0, \cdot)\#\rho_0 + X(t; \cdot, 0)\#\mu$
is zero.
\end{proof}
To prove the existence of unique solutions of ordinary differential
equations for large time intervals, one customarily iterates 
the fixed-point argument, with suitably modified initial data.
A key difference in our case is that the lengths of the time 
intervals on which sort-term solutions are shown to exist may 
greatly vary, depending on when large masses exit from the system.
Before we can do this, after each iteration, we construct a new
measure that serves as a parameter for the next iteration.

The key for this argument is that each iteration only involves
a single fixed measure $\rho^i$.
After a unique solution curve $\xi_i\colon [0,\tau_i]\mapsto [0,1)$
(shifted in time)
of the ordinary differential equation has been obtained,
this is used to extend the curve $t\mapsto \rho_t$ from the interval
$[0,T_{i-1}]$ to a larger interval $[0,T_i]$.

The curve $\xi$ is constructed on each interval $[T_{i-1},T_i]$
from the unique solutions of
$\dot\xi_i=\alpha(\mu^i((0,s])+\rho^i ([0,1-\xi_i(t)))$.
Afterwards,
$\rho_t$ is constructed from $\xi$ for that same time interval.
One still needs to verify that this $\xi_i$ indeed satisfies
$\dot\xi_i=\alpha(\rho_t([0,1))$ on each new interval, or that the
curve $\xi$ satisfies the related equation
$\dot{\xi}(t)=\alpha(\mu((\max\{0,\xi^{-1}(\xi(t)-1)\},t])+\rho_0([0,1-\xi(t))))$.

The total number $N$ of iterations needed to get a solution $\rho$
for all of $[0,T]$ is a priori bounded above by
${\rm ceil} (T/t_{00})+N_1+N_2$. (Only the exiting masses stemming from
$\mu$ matter, since the entering masses cancel in the contraction mapping argument.)
The maximal number  $N$ of iterations may be smaller than this bound,
e.g., if at the end $\rho_T$ still contains large point masses.
In the sequel we shall construct
\begin{itemize}
\item finite sequences $(\tau_i)_{i=0}^N$ and $(T_i)_{i=0}^{N}$
      of nonnegative numbers,
\item finite sequences of bi-Lipschitz continuous functions
      $\xi_i\colon [0,\tau_i]\mapsto [0,1)$, and
      $\xi\colon [0,T_i]\mapsto [0,\infty)$,
\item a finite sequence of maps $X\colon \{(t, r, x)\colon 0 \leq r\leq t\leq T_i, x \in [0,\infty)\} \mapsto [0,\infty)$,
\item finite sequences of measures $\rho^i \in {\cal M}^+([0,1))$
      and $\mu^i\in {\cal M}^+((0,T-T_{i-1}])$, and
\item a finite sequence of curves $\rho\colon [0,T_i]\mapsto {\cal M}^+([0,1))$.
\end{itemize}
Strictly speaking, one should also index the curves $\xi$ and $\rho$,
and the maps $X$ by $i$
as they are defined on different domains. But it will be clear that
they just denote the usual extensions of each other to larger
domains. As is customary, we omit such extra indexing.
The members of these sequences and the curves will be shown
to have the following properties for every $0\leq i \leq N$
(or $1\leq i \leq N$ for $\mu^i$, $\rho^i$). Some of these
properties will be used to construct these in the sequel.
\begin{enumerate}
\renewcommand{\theenumi}{(P\arabic{enumi})}
\setlength{\itemindent}{0.3em}
\item \label{Ptaui}
      $\;0<\tau_i<1$.
\item \label{PTi}
      $T_i=\sum_{j\leq i}\tau_j$ and $T_{N}=T$.
\item \label{Pxii}
      For every $t\in [0,\tau_i]$,
\begin{equation}
\label{diffeqn}
      \xi_i(t)=\int_0^t\alpha(\mu^i((0,s])+\rho^i ([0,1-\xi_i(s))))\,ds.
\end{equation}
\item \label{Pxi}
      For every $s\in [0,\tau_i]$,
      $\xi(T_{i-1}+s)=\xi(T_{i-1})+\xi_i(s)$.
\item \label{PX}
      For all $0 \leq r\leq s \leq t\leq T_i$ and all $x\in [0,\infty)$,
      $X(t;s,X(s;r,x))=X(t;r,x)$.
\item \label{Pmui}
      The measure $\mu^i$ is the pushforward
      (by a translation) of a restriction of the original influx, defined
      for $F\in {\cal M}^+((0,T-T_i])$ by
      $\mu^i(F)=\mu( \{t\colon t-T_{i-1} \in F\})$
\item \label{Prhot}
      For every $s\in [0,\tau_i]$, $\rho_{T_{i-1}+s}$ is the sum of
      the pushforward of a restriction of the measure $\rho^i$ (by a translation),
      and by the pushforward of a restriction
      of the measure $\mu^i$  by the map~$X$, defined
      for every Borel set $E\subseteq [0,1))$ by
      \begin{eqnarray}
      \label{changename}
      \nonumber
      \rho_{T_{i-1}+s}(E)&=&\mu^i( \{r\in (0,s] \colon X(T_{i-1}+s; T_{i-1}+r,0)\in E\})\\
                      &&+\rho^i(\{x\in [0,1)\colon X(T_{i-1}+s; T_{i-1},x)\in E\}).
      \end{eqnarray}
\item \label{Prhosemi}
      For every $0\leq t\leq T_i$ and every Borel set $E\subseteq [0,1))$
      \begin{equation}
      \label{changethis}
      \rho_{t}(E)=\mu( \{r\in (0,t] \colon X(t; r,0)\in E\})
                +\rho_0(\{x\in [0,1)\colon X(t;0,x)\in E\}).
      \end{equation}
\item \label{Pdeltax}
      For every $t\in [0,T_i]$ and for every interval $I\subseteq[0,1)$,
      if the length of $I$ is less than $t_{00}$ then
      $\;\rho_{t,ac}(I) < \frac{v_{\rm min}}{4L}$.
\item \label{Prhoi}
      The measure $\rho^i=\rho_{T_{i-1}}$ is used as the new initial
      condition for $1\leq i\leq N$.
\item \label{Prhosc}
      The singular continuous part of the measures $\mu^i$, $\rho^i$, and $\rho_t$ are zero
      (see lemma \ref{LemNosc}).
\item \label{Ptau}
      For almost every $t\in [0,T_i]$, $\dot{\xi}(t)=\alpha(\rho_t([0,1)))$.
\item \label{Pglob}
      For almost every $t\in [0,T_i]$,
      \begin{equation}
      \label{changealso}
      \dot{\xi}(t)=\alpha(\mu((\max\{0,\xi^{-1}(\xi(t)-1)\},t])+\rho_0([0,1-\xi(t)))).
      \end{equation}
      Note that if $\xi(t)\geq 1$ then
      $\rho_t([0,1-\xi(t)))=\rho_t(\emptyset)=0$.
      \vspace{1.5mm}
\end{enumerate}

For $i=0$ set $\tau_0=T_0=0$, take the trivial curves $\xi_0(0)=\xi(0)=0$ and
the identity $X(0; 0,x)=x$ for all $x\in [0,1)$.
For $i=1$ use the original measures as data $\rho^1=\rho_0$ and $\mu^1=\mu$.
Now suppose $0<i\leq N$ is arbitrary but fixed and for all $0\leq j<i$
all the above have been constructed, and have been shown to have the asserted
properties \ref{Ptaui}--\ref{Pglob}.
First define the new data $\rho^i=\rho_{T_{i-1}}$ and $\mu^i$ as in
\ref{Prhoi} and \ref{Pmui}.
Both have zero singular continuous part and their combined
total mass is less or equal to the combined mass of the
original measures $\rho_0$ and $\mu$.
In particular, the estimate \ref{Pdeltax} for the absolutely
continuous part of $\rho_{T_{i-1}}$ still holds.
Moreover, the combined number of {\em large point masses} of
$\rho^i$ and $\mu^i$ cannot exceed the combined number
of {\em large point masses} of the original measures
$\rho_0$ and $\mu$.
Thus using the same set $\Omega$ (with same $v_{\min}$ and
same uniform initial choice for $t_{00}$, theorem \ref{disthm1}
yields the existence of a $\tau_i>0$ and a unique curve
$\xi_i \colon [0,\tau_i]\mapsto [0,1)$
that satisfies \ref{Pxii}.

Now use the formula in \ref{Pxi} to extend the curve $\xi$ from
the interval $[0,T_{i-1}]$ to $[0,T_{i-1}+\tau_i]=[0,T_i]$.
Next extend the map $X$ from
$\{(r,t)\colon 0 \leq r\leq t\leq T_{i-1}\}\times [0,\infty)$
to $\{(r,t)\colon 0 \leq r\leq t\leq T_i\}\times [0,\infty)$
by first setting for all $0\leq r \leq T_{i-1}\leq t \leq T_i$
and every $x\in [0,\infty)$,
$X(t; r,x)=X(T_{i-1}; r,x)+\xi_i(t-T_{i-1})$, and then, in a second
step, for all $0 \leq T_{i-1}\leq r \leq t \leq T_i$
and every $x\in [0,\infty)$,
$X(t; r,x)=x+\xi_i(t-T_{i-1})-\xi_i(r-T_{i-1})$.
Using the property \ref{Pxi},  the above may alternatively be
written in terms of $\xi$.
Indeed, for $0 \leq r\leq T_{i-1} \leq t\leq T_i$ and $x\in [0,\infty)$
\begin{align}
X(t; r,x)&=X(T_{i-1}; r,x)+\xi_i(t-T_{i-1}) \nonumber\\
&=x+(\xi(T_{i-1})-\xi(r))+(\xi(t)-\xi(T_{i-1}))
=x+\xi(t)-\xi(r).
\end{align}
Similarly, for $0\leq T_{i-1} \leq r\leq t\leq T_i$ and $x\in [0,\infty)$
\begin{align}
X(t; r,x)&=x+\xi_i(t-T_{i-1})-\xi_i(r-T_{i-1}) \nonumber\\
&=x+(\xi(t)-\xi(T_{i-1}))-(\xi(r)-\xi(T_{i-1}))
=x+\xi(t)-\xi(r).
\end{align}
The semigroup property of the map $X$ on the larger domain follows immediately.
It is simply a consequence of the additivity of integrals
over disjoint intervals in \eqref{eq2contract}.
Let $0 \leq r\leq s \leq t\leq T_i$ and $x\in [0,1)$ be arbitrary but fixed.
Then
\begin{equation}
\label{Xsemigroup-proof}
X(t;s,X(s;r,x))=(x+\xi(s)-\xi(r))+\xi(t)-\xi(s)=x+\xi(t)-\xi(r)=X(t;r,x).
\end{equation}
Since the solution curves $\xi_i$ (and their inverses) all satisfy the
same Lipschitz bounds specified in the same set $\Omega$ (except for possible
different final times), the curve $\xi$ satisfies the same conditions. Thus
the map $X$ is Lipschitz and therefore absolutely continuous in each of
its variables.
Hence the pushforwards by scalar functions obtained from $X$
(by holding two arguments fixed) of measures are well defined.
Moreover, measures with zero singular continuous part are mapped to
measures with zero singular continuous part (lemma \ref{LemNosc}),
and absolutely continuous measures
and pure point measures mapped to measures of the same kind.

Use \eqref{changename} in item \ref{Prhot} to extend the
curve $\rho\colon t\mapsto \rho_t$ from the interval $[0,T_{i-1}]$
to the interval $[0,T_i]$.
Note that, by hypothesis, \eqref{changethis} already holds for
every $0\leq t\leq T_{i-1}$ and for every Borel set $E\subseteq [0,1)$.
In particular,
$\rho_{T_{i-1}}(E)= \mu( \{r\in (0,T_{i-1}] \colon X(T_{i-1}; r,0)\in E\})
                   +\rho(\{x\in [0,1)\colon X(T_{i-1}; 0,x)\in E\})$.
Now let $t\in [T_{i-1},T_i]$ and $E\subseteq [0,1))$ be
arbitrary but fixed Borel set.
Using the definitions of $\rho_t$ for $t$ in the new interval,
the definitions of $\rho^i$ and $\mu^i$, and the induction hypothesis, calculate:
\begin{align}
\rho_{t}(E)&=\mu^i( \{s\in (0,t-T_{i-1}] \colon X(t; T_{i-1}+s,0)\in E\})
                      +\rho^i(\{x\in [0,1)\colon X(t; T_{i-1},x)\in E\})\nonumber\\
                   =&\mu( \{r \in (T_{i-1},t] \colon X(t; r,0)\in E\})
                      +\rho_{T_{i-1}}(\{x\in [0,1)\colon X(t; T_{i-1},x)\in E\})\nonumber\\
                   =&\mu( \{r \in (T_{i-1},t] \colon X(t; r,0)\in E\})
                      +\mu( \{r \in (0,T_{i-1}] \colon X(t; T_{i-1},X(T_{i-1}; r,0)\in E\})\\
                   &+\rho_0(\{x\in [0,1)\colon X(t; T_{i-1},X(T_{i-1};0,x)\in E\})\nonumber\\
                   =&\mu( \{r \in (0,t] \colon X(t; r,0)\in E\})
                      +\rho_0(\{x\in [0,1)\colon X(t; 0,x)\in E\}).\nonumber
\end{align}
Note that there is no need to consider special cases, e.g., whether
any of $\,\xi(T_{i-1})\leq \xi(t)\leq \xi(T_i)$ is less or larger or equal to $1$.
If $\xi(t)<1$ then
$\{r \in (0,t] \colon X(t; r,0)\in [0,1)\}=(0,t]$
and $\{x\in [0,1)\colon X(t; 0,x)\in [0,1)\}$ is nonempty.
If $\xi(t)\geq 1$ then
$\{r \in (0,t] \colon X(t; r,0)\in [0,1)\}=(t-\xi^{-1}(\xi(t)-1),t]$
and $\{x\in [0,1)\colon X(t; 0,x)\in [0,1)\}$ is empty.
Using slightly different notation, taking $E=[0,1)$,
it is an immediate corollary that for all $t\in [0,T_i]$,
$\rho_{t}([0,1))=\mu(\max\{0,\xi^{-1}(\xi(t)-1)\},t)+\rho_{0}([0,1-\xi(t)))$.

Since the map $r\mapsto X(T_{i-1}+s; r,0)$ reduces distances, intuitively
$\Delta x=v\Delta t <\Delta t$ with $v\leq 1$,
the pushforward by this map of $\mu^i$
restricted to $(0,s]$ {\em concentrates} the absolutely continuous part
of $\mu$ when becoming part of $\rho_t$.
Now suppose $I\subseteq [0,1)$ is on interval of length at most $t_{00}$.
Then $J=X(T_{i-1}+s; \cdot ,0)^{-1}(I)$ is an interval of length at most
$t_{00}/v_{\min}$. Therefore $\mu_i(J)<\frac{v_{\min}}{4L}$, and thus
$\rho_{T_{i-1}}(I)<\frac{v_{\min}}{4L}$.

The next to last item is to verify that for almost every $t\in [0,T_i]$
this curve satisfies $\dot{\xi}(t)=\rho_t([0,1))$.
By hypothesis, this equation holds for almost every $t\in [0,T_{i-1}]$.
By construction, see item \ref{Pxi},
for every $s\in [0,\tau_i]$, $\xi(T_{i-1}+s)=\xi(T_{i-1})+\xi_i(s)$.
Denoting differentiation by $s$ again by a dot, using \eqref{diffeqn}
in \ref{Pxii},
it follows that at every $s\in [0,\tau_i]$ at which the integrand
of \eqref{diffeqn} is continuous
\begin{equation}
\dot\xi(T_{i-1}+s) = \dot\xi_{i}(s) =\alpha(\mu^i((0,s])+\rho^i ([0,1-\xi_i(s)))),
\end{equation}
and
\begin{align}
\rho_{T_{i-1}+s}([0,1))
=& \mu^{i}\left(\left\{ r\in (0,s] \colon X(T_{i-1}+s; T_{i-1}+r, 0) \in [0,1)\right\}\right)
\nonumber \\
&+ \rho^{i}\left(\left\{x\in [0,1) \colon X(T_{i-1}+s; T_{i-1}, x) \in [0,1)\right\}\right)
\nonumber \\
=& \mu^{i}\left(\left\{r\in (0,s] \colon \xi(T_{i-1}+s)-\xi(T_{i-1}+r)\in [0,1)\right\}\right)
\nonumber \\
&+\rho^{i}\left(\left\{x\in [0,1) \colon \xi(T_{i-1}+s)-\xi(T_{i-1})+x \in [0,1)\right\}\right)
\nonumber \\
=&\mu^{i}\left(\left\{r\in [0,s] \colon \xi(T_{i-1})+\xi_i(s)-\xi(T_{i-1})-\xi_i(t)\in [0,1)\right\}\right)
\\\nonumber
&+\rho^{i} \left(\left\{x\in [0,1) \colon \xi_i(s)+x \in [0,1)\right\}\right)
\\ \nonumber
=& \mu^{i}\left(\left\{r\in (0,s] \colon \xi_{i}(s)-\xi_i(r)\in [0,1)\right\}\right)
   +\rho^{i}([0, 1-\xi_i(s))
\\ \nonumber
=& \mu^{i}((0,s])+\rho^{i}([0, 1-\xi_i(s)).
\end{align}
Thus,
\begin{equation}
\label{xidot-xiidot}
\dot{\xi}(T_{i-1}+s)=\alpha\left(\rho_{T_{i-1}+s}([0,1))\right).
\end{equation}

This iterative procedure may be continued until $T_i=T$,
naturally working with $t_{00}$ replaced by $T-T_i$ if the
latter is smaller. Since there still may be several {\em large point masses}
exiting the system in these last intervals, there may be
several such $i$ such that $T-T_{i-1}<t_{00}$.

\subsection{The Semiflow and Lagrangian Solutions}
\label{sect31}
We fix terminology and notation for a semiflow for a vector field.
In the sequel we are only interested in the special case of a time-varying
vector field $v$ that is constant in space
(for every fixed time $t$), defined originally on $[0,T]\times [0,1)$,
but naturally extended to $[0,T]\times [0,+\infty)$.
The vector field will be integrable in time and bounded.
We make the following definition only for the case of our special
regularity hypotheses.
Thus we dispense stating the definition for more general regularity hypotheses.
\begin{definition}
\label{defflow}
Suppose $v\colon [0,T] \times [0,\infty) \mapsto [0,1)$
is integrable with respect to the first variable,
and constant with respect to the second variable.
A map $X: \{(t,r)\colon 0\leq r\leq t\;\leq T\} \times [0,\infty) \to \mathbb{R}^{+}$
is called the semiflow of the time-varying vector field $v$ if it
satisfies for all $r\in [0,T]$ and all  $x_0 \in [0,\infty)$
\begin{subequations}
\label{eq2}
\begin{align}
\label{eq2a}
\dot{X}(t; r, x_0)&=v(t,X(t; r, x_0) \;\mbox{ for almost every }\; t\in [r,T],\;\mbox{ and }
\\
\label{eq2b}
X(r; r, x_0)&= x_0
\end{align}
\end{subequations}
with $\dot{X}$ denoting the derivative of $X$ with respect to the first variable $t$.
\end{definition}

Note that in this special case the semiflow of a vector field satisfying
the stated hypotheses is clearly unique
(since $v$ is trivially Lipschitz in the space variable).

\begin{definition}
\label{LagSolDef}
Suppose $\rho_0 \in \mathcal{M}^{+}([0,1))$
and $\mu \in \mathcal{M}^{+}((0,T])$ are fixed Borel measures.
We say a function $\Phi: [0,T] \to \mathcal{M}^{+}([0,1))$
is a Lagrangian solution of the system~\eqref{eq3}
if for every $t\in [0,T]$ and every Borel set $E\subset [0,1)$
\begin{equation}
\label{geqn3}
\Phi_{t}(\rho_0,\mu)(E)=\int_{[0,1)} \chi_{E}(X(t;0,x_0))\,d\rho_0(x_{0})
+ \int_{(0,t]} \chi_{E}(X(t;s,0))\,d\mu(s).
\end{equation}
where the map
$X: \{(t,r)\colon 0\leq r\leq t\;\leq T\} \times [0,1] \to \mathbb{R}^{+}$ is the
semiflow of the vector field $v\colon(t,x)\mapsto \alpha(W(t))=\alpha(\rho_t([0,1)))$
and $\chi_{E}$ is the indicator function of set $E$.
\end{definition}
\begin{remark}
From definition \eqref{LagSolDef}, the Lagrangian solution $\Phi$
of the system \eqref{eq3} can also be interpreted as follows:
Given arbitrary but fixed Borel measures $\rho_0 \in \mathcal{M}^{+}([0,1))$
and $\mu \in \mathcal{M}^{+}((0,T])$, for every $t \in [0, T]$,
\begin{align}\Phi_t(\rho_0, \mu)= X(t; 0, \cdot)\# \rho_0 + X(t, \cdot, 0)\#\mu.
\end{align}
\end{remark}
In addition, the procedure in subsection \ref{LargeTimes} yields a semiflow
$X\colon \{(r,t)\colon 0\leq r \leq t \leq T\}\times [0,\infty) \mapsto [0,\infty)$
and a curve of positive measures
$t\mapsto \rho_t\in {\cal M}^+([0,1))$ which satisfies for almost all
$0\leq r \leq t \leq T$ and all $x\in [0,\infty)$
\begin{align}
\label{aftereqnX}
\frac{d}{dt}X(t,r,x)=&\frac{d}{dt}(x+\xi(t)-\xi(r))=\dot{\xi}(t)=\alpha(\rho_t([0,1))\nonumber\\
\nonumber
=& \alpha(\mu\left(\left\{r \in (0, t] \colon X(t; r, 0) \in [0,1)\right\}\right)
+\rho_0\left(\left\{x\in [0,1) \colon X(t; 0, x)\in [0,1)\right\}\right))\\
=&\alpha(\mu\left(\left\{r \in (0, t] \colon \left(\xi(t) -\xi(r)\right) \in [0,1)\right\}\right)
+\rho_0\left(\left\{x\in [0,1) \colon \left(x+\xi(t)\right)\in [0,1)\right\}\right))\\
=&\alpha(\mu(\max\{0,\xi^{-1}(\xi(t)-1)\},t]+\rho_0([0,1-\xi(t)))). \nonumber
\end{align}
In particular, by construction and definition \ref{LagSolDef},
the curve $t \mapsto \rho_t \in \mathcal{M}^{+}([0,1))$
is a Lagrangian solution of the system~\eqref{eq3}.
Furthermore, the existence of a unique characteristic $\xi$
implies that there is a unique semiflow $X$ that satisfies \eqref{aftereqnX}.
Thus for fixed Borel measures $\rho_0 \in \mathcal{M}^{+}([0,1))$
and $\mu\in \mathcal{M}^{+}((0,T])$, $\rho_t = \Phi_t(\rho_0, \mu)$
is the unique Lagrangian solution of the system~\eqref{eq3}.

Next we establish the semigroup property of the Lagrangian solution
of the system~\eqref{eq3} as defined in definition \ref{LagSolDef}.
For convenience, we temporarily change the notation of the Lagrangian 
solution of the system~\eqref{eq3} to emphasize the time dependence, 
and considering fixed initial data and influx. 
For arbitrary but fixed $\rho_r \in \mathcal{M}^{+}([0,1))$ and 
$\mu\in \mathcal{M}^{+}((0,T])$, consider times $0 \leq r \leq t \leq T$
Define $\mu_r \in \mathcal{M}^{+}((r, T])$ by 
$\mu_r = \mu|_{(r, T]}$ ). Then for every Borel set $E \subset [0, 1)$
write
\begin{align}
\label{pushforwardDef}
\Phi(t;r, \rho_r)(E) &= \rho_t(E)
\nonumber = \int_{[0,1)} \chi_{E}(X(t;r,x_0))\,d\rho_r(x_{0})
  + \int_{(r,t]} \chi_{E}(X(t;\tau,0))\,d\mu_r(\tau)
\\ \nonumber
&= (X(t; r, \cdot)\# \rho_r)(E)+(X(t; \cdot, 0)\# \mu_r)(E)
\end{align}
In particular, if $r=0$ then $\rho_t(E) = \Phi(t;0, \rho_0)(E)=\Phi_t(\rho_0, \mu)(E)$.
\begin{lemma}
For $0 \leq r \leq s \leq t \leq T$, the Lagrangian solution $\Phi$
of the system~\eqref{eq3} satisfies
\\
$\Phi(t; s, \Phi(s; r, \rho_r))=\Phi(t; r, \rho_r)$.
\end{lemma}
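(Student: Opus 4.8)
The plan is to reduce the semigroup property of $\Phi$ to the already-established semigroup property \ref{PX} of the underlying semiflow $X$, namely $X(t;s,X(s;r,x))=X(t;r,x)$ for $0\le r\le s\le t\le T$, together with the fact (noted after \eqref{Xsemigroup-proof}) that in our setting $X(t;r,x)=x+\xi(t)-\xi(r)$. The key observation is that $\Phi(s;r,\rho_r)=\rho_s$ decomposes into the part of the mass already present at time $r$ (pushed forward by $X(s;r,\cdot)$) and the part injected through the boundary during $(r,s]$ (pushed forward by $X(s;\cdot,0)$ applied to $\mu_r=\mu|_{(r,T]}$). When we then apply $\Phi(t;s,\cdot)$ to this measure, the push-forward structure lets us track each piece of mass separately.

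First I would write, using the remark after Definition~\ref{LagSolDef}, $\Phi(t;s,\rho_s)=X(t;s,\cdot)\#\rho_s+X(t;\cdot,0)\#\mu_s$, and then substitute $\rho_s=X(s;r,\cdot)\#\rho_r+X(s;\cdot,0)\#\mu_r$. Distributing the push-forward $X(t;s,\cdot)\#$ over the sum and using functoriality of push-forwards, $X(t;s,\cdot)\#\bigl(X(s;r,\cdot)\#\rho_r\bigr)=\bigl(X(t;s,\cdot)\circ X(s;r,\cdot)\bigr)\#\rho_r$, which by the semiflow identity \ref{PX} equals $X(t;r,\cdot)\#\rho_r$. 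This handles the ``initial state'' contribution. For the influx contributions I need to show that $X(t;s,\cdot)\#\bigl(X(s;\cdot,0)\#\mu_r\bigr)+X(t;\cdot,0)\#\mu_s=X(t;\cdot,0)\#\mu_r$ — note $\mu_r=\mu|_{(r,T]}$ splits as $\mu|_{(r,s]}+\mu|_{(s,T]}=\mu|_{(r,s]}+\mu_s$, so it suffices to check that the mass entering during $(r,s]$, first carried to time $s$ by $X(s;\cdot,0)$ and then to time $t$ by $X(t;s,\cdot)$, arrives at the same place as if carried directly from entry time to $t$ by $X(t;\cdot,0)$. Concretely, for $\tau\in(r,s]$ we need $X(t;s,X(s;\tau,0))=X(t;\tau,0)$, which is again exactly \ref{PX} (with $x=0$). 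Adding the two pieces and recombining $\mu|_{(r,s]}+\mu_s=\mu_r$ gives the influx term, and summing with the initial-state term yields $\Phi(t;r,\rho_r)$.

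The only mild subtlety — and the place I would be most careful — is bookkeeping of domains and the relabelled measures $\mu_r,\mu_s,\mu_r$, making sure the restriction intervals $(r,s]$, $(s,t]$, $(r,t]$ partition correctly and that every push-forward is of a measure on the right space (so that $X(s;\tau,0)$ makes sense for $\tau$ in the relevant interval, and the composition $X(t;s,\cdot)\circ X(s;\cdot,0)$ is defined on $(r,s]$). There is no analytic difficulty here: all maps involved are bi-Lipschitz hence Borel measurable, so push-forwards are well-defined and functorial, and no continuity or regularity estimates are needed. I would present the computation either as one displayed chain of equalities on measures evaluated at an arbitrary Borel set $E\subset[0,1)$, invoking \eqref{geqn3} and \ref{PX} at the marked steps, or — more cleanly — at the level of measures using the push-forward notation and the identity $(g\#)(f\#\nu)=(g\circ f)\#\nu$.
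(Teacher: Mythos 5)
Your proposal is correct and follows essentially the same route as the paper: decompose $\Phi(s;r,\rho_r)$ into the pushforward of $\rho_r$ and of the restricted influx, apply $\Phi(t;s,\cdot)$, and reduce everything to the semigroup identity \ref{PX} for $X$, splitting the influx over $(r,s]$ and $(s,t]$ and recombining to $(r,t]$. The paper carries out the same computation evaluated on an arbitrary Borel set rather than invoking functoriality of pushforwards abstractly, but the substance is identical.
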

\begin{proof}
The proof uses the notation of pushforward.
By \eqref{pushforwardDef},
\begin{align}
\Phi(s, r, \rho_r) = X(s; r, \cdot)\#\rho_r + X(s; \cdot, 0)\# \mu_r.
\end{align}
Thus,
\begin{align}
\Phi(t; s, \Phi(s; r, \rho_r))
=&  X(t; s, \cdot)\#(X(s; r, \cdot)\#\rho_r + X(s; \cdot, 0)\# \mu_r) + X(t; \cdot, 0)\#\mu_s
\nonumber \\
=& X(t; s, \cdot)\#(X(s; r, \cdot)\#\rho_r)
+ X(t; s, \cdot)\#(X(s; \cdot, 0)\# \mu_r)+ X(t; \cdot, 0)\#\mu_s.
\end{align}
Let $\Phi_{1, t} = X(t; s, \cdot)\#(X(s; r, \cdot)\#\rho_r)$, then for every Borel set $E \subset [0,1)$,
\begin{subequations}
\label{rho1t}
\begin{align}
\label{rho1ta}
\Phi_{1, t}(E) &= X(t; s, \cdot)\#(X(s; r, \cdot)\#\rho_r)(E)
\\ \label{rho1tb}
&= (X(s; r, \cdot)\#\rho_r)\left(\left\{x\in [0,1) \colon X(t; s, x) \in E\right\}\right)
\\ \label{rho1tc}
&=\rho_r \left(\left\{x\in [0,1) \colon X(t; s, X(s; r, x))\in E \right\}\right)
\\ \label{rho1td}
& = \rho_r \left(\left\{x\in [0,1) \colon X(t; r, x) \in E\right\}\right)
\\ \label{rho1te}
& = (X(t; r, \cdot)\#\rho_r)(E).
\end{align}
\end{subequations}
The second to last step \eqref{rho2td} uses the semigroup property of $X$.
Therefore, $\Phi_{1, t} =X(t; r, \cdot)\#\rho_r $.
\\
Let $\Phi_{2, t}=X(t; s, \cdot)\#(X(s; \cdot, 0)\# \mu_r)+ X(t; \cdot, 0)\#\mu_s$.
Then for every Borel set $E \subset [0,1)$,
\begin{subequations}
\label{rho2t}
\begin{align}
\label{rho2ta}
\Phi_{2, t}(E)=& X(t; s, \cdot)\#(X(s; \cdot, 0)\# \mu_r)(E)+ X(t; \cdot, 0)\#\mu_s(E)
\\ \label{rho2tb}
=& (X(s; \cdot, 0)\# \mu_r) \left(\left\{x\in [0,1) \colon X(t;s, x) \in E\right\}\right)
+ X(t; \cdot, 0)\#\mu_s(E)
\\ \label{rho2tc}
=& \mu_r \left(\left\{ \tau \in (r, s] \colon X(t; s, X(s; \tau, 0 )) \in E \right\}\right)
+X(t; \cdot, 0)\#\mu_s(E)
\\ \label{rho2td}
=&\mu_r\left(\left\{\tau \in (r, s] \colon X(t;\tau, 0) \in E\right\}\right)
+ \mu_s \left(\left\{\tau \in (s, t] \colon X(t; \tau, 0)\in E\right\}\right)
\\ \label{rho2te}
=& \int_{(r, s]} \chi_E(X(t; \tau, 0))d\mu(\tau)+\int_{(s, t]} \chi_E(X(t; \tau, 0))d\mu(\tau)
\\ \label{rho2tf}
=& \int_{(r, t]} \chi_E(X(t; \tau, 0))d\mu(\tau)
\\ \label{rho2tg}
=& (X(t; \cdot, 0)\#\mu_r)(E)
\end{align}
\end{subequations}
The fourth step \eqref{rho2td} uses the semigroup property of the semiflow $X$. 
Hence, $\Phi_{2,t}=X(t; \cdot, 0)\#\mu_r$.
Therefore, $\Phi(t; s, \Phi(s; r, \rho_r))=X(t; r, \cdot)\#\rho_r+X(t; \cdot, 0)\#\mu_r = \Phi(t; r, \rho_r).$
\end{proof}

\section{Definition, Existence and Uniqueness of Weak Solutions}
\label{sect4}

Motivated by the Lagrangian point of view, we define a new notion
of weak solution of equations~\eqref{eq3}
and prove that the Lagrangian solution
is indeed a unique measure-valued weak solution.

\subsection{Definition of Weak Solutions}

This section defines a notion of weak solution of the
hyperbolic conservation law~\eqref{eq3}.
Let $\Psi$ be the set of functions $\varphi \colon [0,T] \times [0,1) \mapsto \mathbb{R}$
such that for every $t \in [0, T]$, $\varphi(t, 1)=0$, $\varphi(t, \cdot)$
is differentiable and $\frac{\partial \varphi}{\partial x}$ is continuous (jointly in $(t,x)$)
and for every $x\in [0,1)$, $\varphi(\cdot, x)$ is Lipschitz continuous. That is,
\begin{align}
\Psi =& \left\{ \varphi\colon [0,T] \times [0,1)\mapsto \mathbb{R} \mid \text{ for every } t \in [0, T],
\varphi(t, 1) =0,~ \varphi(t, \cdot) \text{ is differentiable, }\right.
\nonumber \\
& \left. \partial_x \varphi \text{ is continuous } (\text{jointly in }(t,x)), and \right.
\\ \nonumber
&\left. \text{ for every } x\in [0,1), \varphi(\cdot, x) \text{ is Lipschitz continuous }\right\}.
\end{align}

\begin{definition}
\label{defweaksolution}
A measure-valued weak solution of equation~\eqref{eq3}
with initial condition $\rho_0 \in \mathcal{M}^{+}([0,1))$
and boundary condition $\mu \in \mathcal{M}^{+}((0,T])$
is a function $\rho: [0,T] \to \mathcal{M}^{+}([0,1))$,
such that $W \colon [0,T] \mapsto \rho_t([0,1))$ is integrable and
such that for every $\tau \in [0,T]$
and for every $\varphi \in \Psi$, one has
\begin{align}
\label{geqn4}
\nonumber
0&= \int_{(0,\tau]} \int_{[0,1)}
\left(\partial_t \varphi(t,x)+\alpha(W(t))\partial_x\varphi(t,x)\right) \,d\rho_{t}(x)\,dt
+ \int_{(0,\tau]} \varphi (t,0) \,d\mu(t)\\
&\;\;\;\;-\int_{[0,1)} \varphi(\tau,x)\,d\rho_{\tau}(x)
+ \int_{[0,1)} \varphi(0,x) \,d\rho_0(x).
\end{align}
\end{definition}
\subsection{Existence of the Weak Solution}
In this subsection we show the existence
of the measure-valued weak solutions to equation~\eqref{eq3}.
We start with the following lemma about a ``weak chain rule''.

\begin{lemma}
Let $\rho_0 \in \mathcal{M}^{+}([0,1))$ and
$\mu \in \mathcal{M}^{+}((0,T])$ be arbitrary but fixed.
Let $X: \{(t,r)\colon 0\leq r\leq t\;\leq T\} \times [0,1) \to \mathbb{R}^{+}$
be the semiflow of the vector field $v\colon(t,x)\mapsto \alpha(W(t))$
with $W(t)=\Phi_t(\rho_0, \mu)([0,1))$.
Then for almost every $t \in [0, T]$ and every $x_0 \in [0,1)$,
every test function $\varphi$ in $\Psi$ satisfies
\begin{equation}
\frac{d\varphi(t, X(t; 0, x_0))}{dt}
= \partial_{t}\varphi(t, X(t; 0, x_0))+\alpha(W(t))\partial_{x}\varphi(t, X(t; 0, x_0)).
\end{equation}
\end{lemma}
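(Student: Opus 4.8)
The plan is to establish the claimed identity by a direct application of the chain rule, being careful about the two regularity issues at play: the test function $\varphi$ is only Lipschitz (not $C^1$) in $t$, and the flow $t \mapsto X(t;0,x_0)$ is only absolutely continuous (since $W$ is merely integrable, so $\dot X = \alpha(W(t))$ is merely integrable but bounded). The key observation is that $\varphi(t,\cdot)$ is everywhere differentiable in $x$ with $\partial_x\varphi$ jointly continuous, and $\varphi(\cdot,x)$ is Lipschitz hence differentiable a.e.\ in $t$; moreover $t \mapsto X(t;0,x_0) = x_0 + \xi(t)$ is Lipschitz (as $\xi \in \Omega$ up to the finitely many restarts, so $|\dot\xi| \le 1$), hence absolutely continuous. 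So the composite $t \mapsto \varphi(t, X(t;0,x_0))$ is a composition of a Lipschitz map of two variables with an absolutely continuous curve, and is itself absolutely continuous, hence differentiable a.e.

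First I would fix $x_0 \in [0,1)$ and write $g(t) = \varphi(t, X(t;0,x_0))$. To compute $g'(t)$ at a point $t$ where everything is differentiable, I would split the difference quotient:
\begin{equation}
\frac{g(t+h)-g(t)}{h}
= \frac{\varphi(t+h, X(t+h;0,x_0)) - \varphi(t, X(t+h;0,x_0))}{h}
+ \frac{\varphi(t, X(t+h;0,x_0)) - \varphi(t, X(t;0,x_0))}{h}.
\end{equation}
For the second term, since $\varphi(t,\cdot)$ is differentiable at $X(t;0,x_0)$ with derivative $\partial_x\varphi(t,X(t;0,x_0))$, and since $X(t+h;0,x_0) \to X(t;0,x_0)$ with difference quotient $\to \dot X(t;0,x_0) = \alpha(W(t))$ (at points of differentiability of $\xi$), this term converges to $\alpha(W(t))\,\partial_x\varphi(t,X(t;0,x_0))$. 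The first term, at a point $t$ where $\varphi(\cdot, X(t;0,x_0))$ is differentiable, is close to $\partial_t\varphi(t, X(t;0,x_0))$; the only subtlety is that the spatial argument is moving with $h$, which I would handle using the joint continuity of $\partial_x\varphi$ together with the mean value theorem in $t$ to control the error $\varphi(t+h,y_h) - \varphi(t+h,y) - (\varphi(t,y_h)-\varphi(t,y))$ where $y_h = X(t+h;0,x_0)$ and $y = X(t;0,x_0)$. Alternatively — and more cleanly — I would invoke the fact (stated in Subsection~\ref{sect22}) that for such a product structure the composition is well-behaved, and reduce to a standard chain-rule-for-Lipschitz-functions argument.

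The main obstacle is the first term: $\varphi$ is only Lipschitz in $t$, so $\partial_t\varphi(t,x)$ exists only for a.e.\ $t$ (for each fixed $x$), and a priori the exceptional null set could depend on $x$; one must ensure that for a.e.\ $t$ the partial derivative $\partial_t\varphi(t, X(t;0,x_0))$ makes sense and the decomposition above is valid. I expect this to be resolved by a Fubini/measurability argument: the set $\{(t,x) : \partial_t\varphi(t,x) \text{ exists}\}$ has full measure, so for a.e.\ $t$ it contains $(t, X(t;0,x_0))$; combined with differentiability a.e.\ of $\xi$ and everywhere-differentiability in $x$, one gets a full-measure set of $t$ on which the identity holds. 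The remaining steps — passing to the limit in the difference quotients, controlling the cross term via uniform continuity of $\partial_x\varphi$ on compacts — are routine. I would present the argument as: (i) note $g$ is absolutely continuous; (ii) identify a full-measure set of good $t$; (iii) compute $g'(t)$ on that set via the two-term split; (iv) conclude.
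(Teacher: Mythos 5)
Your proposal is correct and follows essentially the same route as the paper's proof: the same two-term splitting of the difference quotient for $\varphi(t,X(t;0,x_0))$, with the spatial increment handled via differentiability of $\varphi(t,\cdot)$, joint continuity (hence local boundedness) of $\partial_x\varphi$, and a.e.\ differentiability of $\xi$, and the temporal increment via a.e.\ existence of $\partial_t\varphi$ along the trajectory. The one caveat is your Fubini aside: a full-measure subset of $[0,T]\times[0,1)$ can still miss the Lipschitz curve $t\mapsto (t,X(t;0,x_0))$, so that inference is not valid as stated; the paper sidesteps the issue by simply fixing, for each $x_0$, those $t$ at which $\partial_t\varphi(t,X(t;0,x_0))$ and $\dot{\xi}(t)$ both exist, which is the same level of rigor as the rest of your argument.
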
	
\begin{proof}
Fix a test function $\varphi \in \Psi$. We show that for almost every $t\in [0,T]$,
every $x_0 \in [0,1)$, arbitrary but fixed $\varepsilon>0$, there exists $\delta >0$,
such that, if $\left|\Delta t\right| < \delta$, then
\begin{align}
&\left| \frac{\varphi(t+\Delta t, X(t+\Delta t; 0, x_0))-\varphi(t, X(t; 0, x_0))}{\Delta t}
-\partial_{t}\varphi(t, X(t; 0, x_0))-\alpha(W(t))\partial_{x}\varphi(t, X(t; 0, x_0))\right|
< \varepsilon.
\end{align}
Since for every $x\in [0,1)$, $\varphi(\cdot, x)$ is Lipschitz continuous and thus differentiable
almost everywhere, the map $\xi \colon [0, T] \mapsto [0, \infty)$ is also differentiable almost
everywhere.
Fix arbitrary $(t, x_0) \in [0,T] \times [0,1)$ such that  both $\partial_t\varphi(t, X(t; 0, x_0))$
and $\partial_t X(t; 0, x_0) = \dot{\xi}(t)$ exist.
Let $\varepsilon>0$ be arbitrary but fixed.
Since $\partial_t\varphi(t, X(t; 0, x_0))$ exists, there exists $\delta_1 >0$
such that,  if $\left|\Delta t\right| < \delta_1$, then
\begin{equation}
\label{eqn1afterD}
\left|\frac{\varphi(t+\Delta t, X(t; 0, x_0))-\varphi(t; X(t; 0, x_0))}{\Delta t}
 - \partial_t\varphi(t, X(t; 0, x_0)) \right| < \frac{\varepsilon}{4}.
\end{equation}
Since for every $t + \Delta t \in [0, T]$,
the map $\varphi(t+\Delta t, \cdot)\colon [0,1) \mapsto \mathbb{R}$
is differentiable, there exists $\delta_2>0$,
such that if $\left|\Delta x \right| < \delta_2$, then
\begin{equation}
\label{eqn2afterD}
\left|\frac{\varphi(t+\Delta t, X(t; 0, x_0)+\Delta x)
-\varphi(t+\Delta t, X(t; 0, x_0))}{\Delta x}
- \partial_x\varphi(t+\Delta t, X(t, 0, x_0))\right| < \frac{\varepsilon}{4}.
\end{equation}
Note that if $\left|\Delta t\right|< \delta_2$,
then $\left|X(t+\Delta t; 0, x_0) - X(t; 0, x_0)\right| \leq \left|\Delta t\right|
< \delta_2$. Let $\Delta x = X(t+\Delta t; 0, x_0) - X(t; 0, x_0)$.
From \eqref{eqn2afterD}, we obtain
\begin{equation}
\label{eqn3afterD}
\left|\frac{\varphi(t+\Delta t, X(t+\Delta t; 0, x_0))-
\varphi(t+\Delta t, X(t; 0, x_0))}{X(t+\Delta t; 0, x_0) - X(t; 0, x_0)}
- \partial_x\varphi(t+\Delta t, X(t, 0, x_0))\right| < \frac{\varepsilon}{4}.
\end{equation}
In addition, since $\partial_x \varphi$ is continuous on $[0, T] \times [0, 1]$
and hence bounded, from \eqref{eqn3afterD}, there exists $U>0$ such that
\begin{equation}
\label{eqn5afterD}
\left|\frac{\varphi(t+\Delta t, X(t+\Delta t; 0, x_0))
-\varphi(t+\Delta t, X(t; 0, x_0))}{X(t+\Delta t; 0, x_0) - X(t; 0, x_0)}\right| < U.
\end{equation}
Since $\partial_t X(t; 0, x_0) = \dot{\xi}(t)$ exists at $t$,
there exists $\delta_3 >0$ such that, if $\left|\Delta t \right|<\delta_3$, then
\begin{equation}
\label{eqn6afterD}
\left|\frac{X(t+\Delta t; 0, x_0) - X(t; 0, x_0)}{\Delta t}
- \alpha(W(t))\right| < \frac{\varepsilon}{4U}.
\end{equation}
Since $\partial_x \varphi$ is continuous (jointly in $(t,x)$),
there exists $\delta_4>0$ such that, if $\left|\Delta t\right|<\delta_4$,
then
\begin{equation}
\label{eqn4afterD}
\left|\partial_x \varphi(t+\Delta t, X(t; 0, x_0))-\partial_x \varphi(t, X(t; 0, x_0))\right|
< \frac{\varepsilon}{4}.
\end{equation}
Choose $\Delta t $ such that $\left|\Delta t\right|< \min\{\delta_i \colon i=1, 2, 3, 4 \}$.
Then from \eqref{eqn1afterD}, \eqref{eqn3afterD}, \eqref{eqn5afterD}, \eqref{eqn6afterD}
and \eqref{eqn4afterD}, and $\alpha(W(t)) \in [0,1)$, we obtain
\begin{align}
\MoveEqLeft \left| \frac{\varphi(t+\Delta t, X(t+\Delta t; 0, x_0)
)-\varphi(t, X(t; 0, x_0))}{\Delta t}
-\partial_{t}\varphi(t, X(t; 0, x_0))-\alpha(W(t))\partial_{x}\varphi(t, X(t; 0, x_0))\right|
\nonumber \\
\leq &\left|\frac{\varphi(t+\Delta t, X(t; 0, x_0))-\varphi(t; X(t; 0, x_0))}{\Delta t}
 - \partial_t\varphi(t, X(t; 0, x_0)) \right|
\nonumber \\
&+\left|\left(\frac{\varphi(t+\Delta t, X(t+\Delta t; 0, x_0))
-\varphi(t+\Delta t, X(t; 0, x_0))}{X(t+\Delta t; 0, x_0)
- X(t; 0, x_0)}\right) \alpha(W(t))\right.
\nonumber\\
&- \left. \partial_x\varphi(t+\Delta t, X(t, 0, x_0)) \alpha(W(t))\right|
\\ \nonumber
&+\left|\frac{\varphi(t+\Delta t, X(t+\Delta t; 0, x_0))
-\varphi(t+\Delta t, X(t; 0, x_0))}{X(t+\Delta t; 0, x_0) - X(t; 0, x_0)}\right|
\\ \nonumber
& \quad \left|\frac{X(t+\Delta t; 0, x_0) - X(t; 0, x_0)}{\Delta t}
- \alpha(W(t))\right|
\\ \nonumber
&+\left|\left(\partial_x \varphi(t+\Delta t, X(t; 0, x_0))
-\partial_x \varphi(t, X(t; 0, x_0))\right)\alpha(W(t))\right|
<\varepsilon.
\end{align}
By the definition of differentiability, for almost all $t\in [0, T]$
\begin{align}
\frac{d\varphi(t, X(t; 0, x_0))}{dt}
= \partial_{t}\varphi(t, X(t; 0, x_0))+\alpha(W(t))\partial_{x}\varphi(t, X(t; 0, x_0)).\
\end{align}
\end{proof}		
The following theorem guarantees the existence of measure-valued weak solutions.
\begin{theorem}
Every Lagrangian solution of~\eqref{geqn3}
is a weak solution that satisfies~\eqref{geqn4}.
\end{theorem}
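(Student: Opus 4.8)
The plan is to fix $\tau\in[0,T]$ and $\varphi\in\Psi$, insert the defining formula~\eqref{geqn3} for the Lagrangian solution $\rho_t$ into the double integral appearing in~\eqref{geqn4}, interchange the time integral with the $\rho_0$- and $\mu$-integrals by Fubini, evaluate the resulting inner $t$-integrals by the fundamental theorem of calculus together with the weak chain rule just proven, and recognise the boundary terms. I would first record that a Lagrangian solution does meet the standing hypothesis of Definition~\ref{defweaksolution}: $W(t)=\rho_t([0,1))\le\rho_0([0,1))+\mu((0,T])$ is bounded, hence integrable.

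Write $h(t,x)=\partial_t\varphi(t,x)+\alpha(W(t))\,\partial_x\varphi(t,x)$; the weak-chain-rule lemma says $\frac{d}{dt}\varphi(t,X(t;0,x_0))=h(t,X(t;0,x_0))$ for a.e.\ $t$, and the identical argument gives $\frac{d}{dt}\varphi(t,X(t;s,0))=h(t,X(t;s,0))$ for a.e.\ $t\ge s$. Substituting~\eqref{geqn3}, and using that $\partial_t\varphi$ is bounded (Lipschitzness in $t$), $\partial_x\varphi$ is bounded (continuity on the compact square $[0,T]\times[0,1]$), $\alpha\le 1$, and $\rho_0,\mu$ are finite, the integrand is bounded and jointly measurable, so Fubini yields
\[
\int_{(0,\tau]}\!\!\int_{[0,1)}\! h\,d\rho_t\,dt
= \int_{[0,1)}\! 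I_0(x_0)\,d\rho_0(x_0) + \int_{(0,\tau]}\! I_\mu(s)\,d\mu(s),
\]
where $I_0(x_0)=\int h(t,X(t;0,x_0))\,dt$ is taken over $\{t\in(0,\tau]:X(t;0,x_0)<1\}$ and $I_\mu(s)=\int h(t,X(t;s,0))\,dt$ over $\{t\in[s,\tau]:X(t;s,0)<1\}$ — the constraint ``$X(t;\cdot)<1$'' being forced because $\rho_t$ is carried by $[0,1)$, and these sets being sub-intervals (with left endpoint $0$, resp.\ $s$) since $X$ is increasing in $t$. On such a sub-interval the map $t\mapsto\varphi(t,X(t;0,x_0))$ is Lipschitz, hence absolutely continuous, so by the chain rule and the fundamental theorem of calculus $I_0(x_0)$ is the difference of $\varphi(t,X(t;0,x_0))$ at the right and left endpoints; the left endpoint of $I_\mu(s)$ contributes $-\varphi(s,X(s;s,0))=-\varphi(s,0)$ since $X(s;s,0)=0$.

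The one genuine subtlety — and the step I expect to be the main obstacle — is the bookkeeping at the right endpoint, i.e.\ point masses that have crossed the exit $x=1$ by time $\tau$. For fixed $x_0$: either $X(\tau;0,x_0)<1$, so the interval is all of $(0,\tau]$ and the right endpoint contributes $\varphi(\tau,X(\tau;0,x_0))$; or $X(\tau;0,x_0)\ge1$, so by continuity and strict monotonicity of $t\mapsto X(t;0,x_0)=x_0+\xi(t)$ there is a unique $\theta\in(0,\tau]$ with $X(\theta;0,x_0)=1$, the interval is $(0,\theta)$, and the right endpoint contributes $\varphi(\theta,1)=0$. In both cases the right-endpoint term equals $\hat\varphi(\tau,X(\tau;0,x_0))$, with $\hat\varphi(\tau,\cdot)$ denoting $\varphi(\tau,\cdot)$ extended by $0$ beyond $x=1$ (consistent with $\varphi(\tau,1)=0$); likewise for $I_\mu(s)$. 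Since $\rho_\tau$ is exactly the pushforward of $\rho_0$ and $\mu|_{(0,\tau]}$ under $X(\tau;\cdot)$ with mass that has left $[0,1)$ discarded — which is precisely what $\hat\varphi=0$ beyond $x=1$ encodes — the right-endpoint contributions sum to $\int_{[0,1)}\varphi(\tau,x)\,d\rho_\tau(x)$, while the left-endpoint contributions sum to $-\int_{[0,1)}\varphi(0,x)\,d\rho_0(x)-\int_{(0,\tau]}\varphi(s,0)\,d\mu(s)$. Inserting this value of the double integral into~\eqref{geqn4} and cancelling shows the right-hand side is $0$, so the Lagrangian solution is a weak solution. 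A point to verify carefully along the way is the uniform-in-$x$ Lipschitz bound on $t\mapsto\varphi(t,x)$ needed for the absolute continuity of $t\mapsto\varphi(t,X(t;0,x_0))$; it follows from the defining properties of $\Psi$ together with $X$ being $1$-Lipschitz in $t$.
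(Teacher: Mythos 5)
Your proposal is correct and follows essentially the same route as the paper: substitute the pushforward form of the Lagrangian solution, apply Fubini, use the weak chain rule together with the fundamental theorem of calculus along characteristics, and identify the boundary terms. Your explicit case split at the exit time $\theta$ with $\varphi(\theta,1)=0$ is just a more carefully argued version of the paper's device of extending $\varphi$ by zero for $x\geq 1$, so it constitutes the same argument rather than a different one.
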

\begin{proof}
For sake of simplicity of notation, we conveniently extend the functions 
$\varphi$, $\partial_{t}\varphi$ and $\partial_{x}\varphi$ to $[0,\infty)$ 
with value zeros for $x\geq 1$. (Of course these extended functions generally 
are not differentiable at $x=1$);.
Suppose $\Phi$ is a Lagrangian solution that satisfies~\eqref{geqn3},
evaluate the right hand side of equation~\eqref{geqn4} at $\rho = \Phi$,
\begin{align}
\nonumber
\lefteqn{\int_{(0,\tau]} \int_{[0,1)}\left(\partial_t \varphi(t,x)
+\alpha(W(t))\partial_x\varphi(t,x)\right) d\Phi_t(\rho_0,\mu)(x)\,dt}
\\
&= \int_{(0,\tau]} \int_{[0,1)}\left(\partial_t \varphi(t,X(t;0,x_0))
+\alpha(W(t))\partial_x\varphi(t,X(t;0,x_0))\right) \,d\rho_0(x_0)\,dt
\\ \nonumber
\mbox{}&\;\;\;\;+\int_{(0,\tau]} \int_{(0,t]}\left(\partial_t \varphi(t,X(t;s,0))
+\alpha(W(t))\partial_x\varphi(t,X(t;s,0))\right) \,d\mu(s)\,dt
\end{align}
Note that
\begin{subequations}
\label{Lwk1}
\begin{align}
\label{Lwk1a}
\lefteqn{\int_{(0,\tau]} \int_{[0,1)}\left(\partial_t \varphi(t,X(t;0,x_0))
+\alpha(W(t))\partial_x\varphi(t,X(t;0,x_0))\right) \,d\rho_0(x_0)\,dt}
\\ \label{Lwk1b}
&= \int_{(0,\tau]} \int_{[0,1)} \frac{d\varphi(t, X(t;0,x_0))}{dt} \,d\rho_0(x_0)\,dt
\\ \label{Lwk1c}
&= \int_{[0,1)} \int_{(0,\tau]} \frac{d\varphi(t, X(t;0,x_0))}{dt} \,dt\,d\rho_0(x_0)
\\ \label{Lwk1d}
&= \int_{[0,1)} \left(\varphi(\tau, X(\tau; 0, x_0))-\varphi(0, X(0; 0, x_0))\right)\,d\rho_0(x_0)
\\ \label{Lwk1e}
&=\int_{[0,1)} \varphi(\tau,X(\tau;0,x_0))\,d\rho_0(x_0)
- \int_{[0,1)} \varphi(0, X(0; 0, x_0)) \,d\rho_0(x_0)
\\ \label{Lwk1f}
&= \int_{[0,1)} \varphi(\tau,X(\tau;0,x_0))\,d\rho_0(x_0)- \int_{[0,1)} \varphi(0, x) \,d\rho_0(x).
\end{align}
\end{subequations}
On the other hand,
\begin{subequations}
\label{Lwk2}
\begin{align}
\label{Lwk2a}
\lefteqn{\int_{(0,\tau]} \int_{(0,t]}\left(\partial_t \varphi(t,X(t;s,0))
+\alpha(W(t))\partial_x\varphi(t,X(t;s,0))\right) \,d\mu(s)\,dt}
\\ \label{Lwk2b}
&= \int_{(0,\tau]} \int_{(0,t]} \frac{d\varphi(t, X(t;s,0))}{dt} \,d\mu(s)\,dt
\\ \label{Lwk2c}
&= \int_{(0,\tau]} \int_{(s,\tau]} \frac{d\varphi(t, X(t;s,0))}{dt} \,dt \,d\mu(s)
\\ \label{Lwk2d}
&= \int_{(0,\tau]} \left(\varphi(\tau, X(\tau; s,0))-\varphi(s, X(s;s,0))\right)\,d\mu(s)
\\ \label{Lwk2e}
&= \int_{(0,\tau]} \varphi(\tau,X(\tau;s,0))\,d\mu(s)- \int_{(0,\tau]} \varphi(t, 0) \,d\mu(t).
\end{align}
\end{subequations}
In addition,
\begin{equation}
\int_{[0,1)} \varphi(\tau,x)\,d\Phi_{\tau}(\rho_0,\mu)(x)
= \int_{[0,1)} \varphi(\tau,X(\tau;0,x_0))\,d\rho_0(x_0)+\int_{[0,1)} \varphi(\tau,X(\tau;s,0))\,d\mu(s).
\end{equation}
Thus, every Lagrangian solution~\eqref{geqn3} is a weak solution.
\end{proof}
\subsection{Uniqueness of the weak solution}
Next we show that every measure-valued weak solution is also a Lagrangian solution
to the hyperbolic conservation law \eqref{eq3}.
From theorem \eqref{disthm1} we obtain the uniqueness of the measure-valued weak solution.
\begin{theorem}
\label{thmunique}
Foe every initial condition $\rho_0 \in \mathcal{M}^{+}([0,1))$
and for every  boundary condition $\mu \in \mathcal{M}^{+}((0, T])$,
the measure-valued weak solution \eqref{geqn4} to \eqref{eq3} is unique.
\end{theorem}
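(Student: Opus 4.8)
The plan is to show that every measure-valued weak solution of~\eqref{eq3} is in fact the Lagrangian solution of Definition~\ref{LagSolDef}; since the Lagrangian solution was already shown in Subsection~\ref{sect31} to be unique, uniqueness of the weak solution follows immediately. So let $\rho\colon[0,T]\to\mathcal{M}^{+}([0,1))$ be an arbitrary weak solution of~\eqref{eq3} with data $\rho_0,\mu$, write $\hat W(t)=\rho_t([0,1))$, and set $\hat\xi(t)=\int_0^t\alpha(\hat W(s))\,ds$. Because $\hat W$ is integrable (by the definition of a weak solution) and $\alpha$ is continuous and bounded, $s\mapsto\alpha(\hat W(s))$ is integrable, so $\hat\xi$ is well defined; moreover $0<\alpha(\hat W(\cdot))\leq 1$ makes $\hat\xi$ strictly increasing, Lipschitz with constant~$1$, with $\dot{\hat\xi}(t)=\alpha(\hat W(t))$ for a.e.\ $t$. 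Let $\hat X(t;r,x)=x+\hat\xi(t)-\hat\xi(r)$, which by Definition~\ref{defflow} is the unique semiflow of the time-varying, space-constant vector field $v(t,x)=\alpha(\hat W(t))$.

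The core of the argument is to feed test functions that are constant along the characteristics of $v$ into the weak formulation~\eqref{geqn4}. Fix $\tau\in[0,T]$ and a $C^{1}$ function $g$ on $[0,\infty)$ whose support is a compact subset of $[0,1)$, and set $\varphi(t,x)=g\bigl(x+\hat\xi(\tau)-\hat\xi(\min\{t,\tau\})\bigr)$. One checks that $\varphi\in\Psi$: for $t\leq\tau$ it equals $g(x+\hat\xi(\tau)-\hat\xi(t))$ with argument in $[0,\infty)$ that is at least $1$ when $x=1$, so $\varphi(t,1)=0$, while $\varphi(t,1)=g(1)=0$ for $t\geq\tau$; $\partial_x\varphi=g'\bigl(x+\hat\xi(\tau)-\hat\xi(\min\{t,\tau\})\bigr)$ is jointly continuous; and $\varphi(\cdot,x)$ is Lipschitz since $g$ is $C^{1}$ (hence Lipschitz on the relevant compact range) and $\hat\xi$ is Lipschitz. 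By construction $\varphi$ is transported by $v$, so for a.e.\ $t\in(0,\tau]$ one has $\partial_t\varphi(t,x)+\alpha(\hat W(t))\partial_x\varphi(t,x)=\bigl(\alpha(\hat W(t))-\dot{\hat\xi}(t)\bigr)\,g'(\cdot)=0$, and hence the double integral in~\eqref{geqn4} vanishes. Since $\hat\xi(0)=0$, the remaining three terms of~\eqref{geqn4} collapse to
\[
\int_{[0,1)}g(x)\,d\rho_\tau(x)=\int_{(0,\tau]}g\bigl(\hat X(\tau;t,0)\bigr)\,d\mu(t)+\int_{[0,1)}g\bigl(\hat X(\tau;0,x)\bigr)\,d\rho_0(x),
\]
that is, $\int g\,d\rho_\tau=\int g\,d\bigl(\hat X(\tau;0,\cdot)\#\rho_0+\hat X(\tau;\cdot,0)\#(\mu|_{(0,\tau]})\bigr)$. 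The functions $g$ arising here restrict to all $C^{1}$ functions on $[0,1)$ with compact support, a sup-norm dense subset of $C_c([0,1))$, and finite Borel measures on $[0,1)$ are determined by their integrals against $C_c([0,1))$; since both measures involved are finite, it follows that $\rho_\tau=\hat X(\tau;0,\cdot)\#\rho_0+\hat X(\tau;\cdot,0)\#(\mu|_{(0,\tau]})$ on $[0,1)$ for every $\tau\in[0,T]$.

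This identity is exactly the Lagrangian representation~\eqref{geqn3} with the semiflow $X$ of $v(t,x)=\alpha(\rho_t([0,1)))$, which is $\hat X$ by construction, so $\rho$ is a Lagrangian solution of~\eqref{eq3}. (Taking $E=[0,1)$ in the identity yields $\hat W(\tau)=\rho_\tau([0,1))\leq\rho_0([0,1))+\mu((0,T])$, so \emph{a posteriori} $\hat W$ is bounded and $\hat\xi$ is bi-Lipschitz, solving the self-referential Cauchy problem~\eqref{changealso} with $\hat\xi(0)=0$; by the uniqueness of the characteristic curve established in Subsection~\ref{LargeTimes} this forces $\hat\xi=\xi$ and $\hat X=X$, but this check is not strictly needed once the uniqueness of the Lagrangian solution is invoked.) Since the Lagrangian solution is unique by Subsection~\ref{sect31}, so is the weak solution.

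I expect the main obstacle to be the construction in the second paragraph: producing test functions that are admissible for the deliberately restrictive space $\Psi$ while simultaneously being transported by the solution's own velocity $\alpha(\hat W(\cdot))$, which is only integrable in $t$ and generally has infinitely many discontinuities. Three points require care: that $\hat\xi$ is \emph{a priori} merely Lipschitz rather than bi-Lipschitz, which is nonetheless enough to build admissible $\varphi$ and run the argument; that the cancellation $\partial_t\varphi+\alpha(\hat W)\partial_x\varphi=0$ holds only almost everywhere in $t$, which suffices because this term appears in~\eqref{geqn4} only under a time integral; and the measure-theoretic passage from equality of integrals against $C^{1}$ test functions supported away from $x=1$ to equality of the measures $\rho_\tau$ and $\Phi_\tau(\rho_0,\mu)$ on the non-compact interval $[0,1)$.
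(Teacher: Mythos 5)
Your proposal is correct and follows essentially the paper's own route: you plug test functions transported along the characteristic $\hat{\xi}(t)=\int_0^t\alpha(\hat{W}(s))\,ds$ of the weak solution's own velocity into the weak formulation \eqref{geqn4}, so that the double integral cancels and the remaining terms give the pushforward representation of $\rho_\tau$, which is then identified with the unique Lagrangian solution. The only inessential difference is that you derive the representation for all $\tau\in[0,T]$ at once and invoke the uniqueness of the Lagrangian solution (i.e., of the characteristic) from Section~\ref{sect3}, whereas the paper first works on the short interval $[0,\tau_0]$, recognizes $\hat{\xi}$ as the unique fixed point of $F$ on $\Omega$, and then finishes by finitely many iterations as in Theorem~\ref{disthm1}.
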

\begin{proof}
We first establish the uniqueness of the measure-valued weak solution $\hat{\rho}$
over a small time interval $[0,\tau_0]$ first, with $\tau_0$ defined as in \eqref{tau0def}.
By the definition of the measure-valued weak solution,
$\hat{W}\colon t\mapsto \hat{\rho}_t([0,1))$ is integrable, 
and for arbitrary but fixed $\tau \in (0,\tau_0]$, 
for every $\varphi \in \Psi$ and $t\in [0,\tau]$
\begin{align}\label{uniqeqn2}
0&=\int_{(0,\tau]}\int_{[0,1)} \left(\varphi_t(t,x)
+\alpha(\hat{W}(t))\varphi_x(t,x)\right)\,d\hat{\rho}_t(x)dt
+ \int_{(0,\tau]}\varphi(t,0)\,d\mu(t)
\nonumber\\
& \;\;\;-\int_{[0,1)}\varphi(\tau,x)\,d\hat{\rho}_{\tau}(x)
+ \int_{[0,1)} \varphi(0,x)\,d\rho_0(x),
\end{align}
Consider a $C^1$ function $\varphi_0\in C_0^1(0,1)$ with compact support in $(0,1)$,
and let $\hat{\xi}(t) = \int_0^t \alpha(\hat{W}(s))\,ds$ for $t\in [0,\tau]$.
Choose the test function
\begin{align}
    \varphi(t,x)& =
    \begin{cases}
       \varphi_0(\hat{\xi}(\tau)-\hat{\xi}(t)+x),
       & \text{ if } 0\leq x \leq \hat{\xi}(t)-\hat{\xi}(\tau)+1, 0\leq t \leq \tau,\\
       0,& \text{ if } 0\leq \hat{\xi}(t)-\hat{\xi}(\tau)+1 \leq x \leq 1,0\leq t \leq \tau.
    \end{cases}
\end{align}
Note that every test function $\varphi \in\Psi$ satisfies the Cauchy problem
\begin{align}
    \begin{cases}
      \partial_t \varphi + \alpha(\hat{W}(t))\partial_{x}\varphi =0,
      & \text{ if } 0\leq t \leq \tau, 0\leq x \leq 1,\\
       \varphi(\tau,x)=\varphi_0(x),& \text{ if } 0\leq x \leq 1.\\
    \end{cases}
\end{align}
From \eqref{uniqeqn2}, we obtain
\begin{align}
 \int_{[0,1)}\varphi_0(x)\,d\hat{\rho}_{\tau}(x)=
 \int_{(0,\tau]}\varphi_0(\hat{\xi}(\tau)-\hat{\xi}(t))\,d\mu(t)
 +\int_{[0,1-\hat{\xi}(\tau))} \varphi_0(\hat{\xi}(\tau)+x)\,d\rho_0(x).
\end{align}
Since $\varphi_0 \in C_0^1(0,1)$ and $\tau \in [0,\tau_0]$ were arbitrary,
for every Borel set $E \subset [0,1)$, and every $t\in [0,\tau_0]$,
\begin{align}
\hat{\rho}_{t}(E)
&= \int_{(0,t]}\chi_{E}(\hat{\xi}(t)-\hat{\xi}(s))\,d\mu(s)
+\int_{[0,1)} \chi_{E}(\hat{\xi}(t)+x)\,d\rho_0(x).
\end{align}
Therefore,
\begin{align}
\hat{W}(t) =\hat{\rho}_{t}([0,1))
&=\int_{(0,t]}\chi_{[0,1)}(\hat{\xi}(t)-\hat{\xi}(s))\,d\mu(s)
+\int_{[0,1)} \chi_{[0,1)}(\hat{\xi}(t)+x)\,d\rho_0(x)
\nonumber \\
&= \mu((0, t])+\rho_0([0, 1-\hat{\xi}(t)).
\end{align}
Furthermore,
\begin{align}
\hat{\xi}(t) &= \int_0^t \alpha(\hat{W}(s))\,ds
=\int_0^t \alpha(\mu((0, s])+\rho_0([0, 1-\hat{\xi}(s)))\,ds=F(\hat{\xi})(t).
\end{align}
It is easy to check that $\hat{\xi}\in \Omega$.
Since $\xi$ is the unique fixed point of the function $F\colon \Omega\mapsto \Omega$,
$\hat{\xi}=\xi$. Thus $\hat{\rho_t} = \rho_t$ over the time interval $[0,\tau_0]$
which implies the uniqueness of the weak solution of \eqref{eq3}
over the time interval $[0,\tau_0]$.
Similar to the proof of theorem \eqref{disthm1}, one obtains the uniqueness
of the weak solution of \eqref{eq3} defined on all of $[0,T]$ in a finite number of iterations.
\end{proof}
\begin{remark}
We chose the set of test functions $\Psi$ in definition~\ref{defweaksolution}
to match the Lipschitz properties of  $\hat{\xi}$  and $\varphi_0 \in C_0^1(0, 1)$.
\end{remark}
\subsection{Regularity of the weak solution} \label{sect43}
For the special case of finite signed Borel measures on the
interval $[0,1)$ briefly recall the definition of the flat norm.
First denote by $\mathcal{F}$ the set of nonnegative Lipschitz
continuous functions with Lipschitz constant $1$ and that are 
bounded above by $1$
\begin{equation}
\label{Lip1}
\mathcal{F}=\{  f \colon [0,1) \mapsto [0,1] \colon \text{ for all } x,y \in [0,1),
 |f(x)-f(y)| \leq |x-y| \}.
\end{equation}
On the space~$\mathcal{M}([0,1))$ of signed measures, define for $\nu \in  \mathcal{M}([0,1))$
the flat norm
\begin{equation}\label{flat}
\|\nu\|_\flat = \sup_{f\in \mathcal{F}} \left|\int_{[0,1)} f\,d\nu\right|.
\end{equation}
For applications and a discussion of properties the flat norm
in general settings ~\cite{Thieme2018} is an excellent reference.
For a careful discussion comparing and mild solutions and weak solution 
see, e.g., \cite{Evers16}.

The following simple examples shows that in general
the solution~$t\mapsto \rho_t$  of~\eqref{eq3} need not be continuous
under the flat norm $\|\,\cdot\,\|_\flat$, and indeed
with respect to any norm on $\mathcal{M}^{+}([0,1))$.
Example \ref{delta0} illustrates that problems with jumps in the outflux
can be handled by working with a modification of the flat norm defined
as defined in \eqref{normeqn1}.
Example \ref{delta1} demonstrates shows that problems with jumps in the
influx cannot be handled by any norm, but we can still have continuity with
respect to a similar seminorm defined in \eqref{semieqn1}.
\begin{example}
\label{delta0}
Consider the case of $\alpha (W)=\frac{1}{1+W}$, the trivial influx $\mu = 0$ and
the initial datum $\rho_0=\delta_0$ consisting of a single unit mass $\delta_0$
at $x=0$. Then for every $t<2$, the solution $\rho_t=\delta_{t/2}$
consists of a single mass at $x=\frac{1}{2}t$, whereas for all
$t\geq 2$, the solution $\rho_t=0$ is the trivial measure.
Using the function $f\equiv 1$ it is easily seen that for every~$t<2$
we have $\|\rho_t-\rho_2\|_\flat = \|\delta _{t/2}\|_\flat = 1$,
and hence $t\mapsto \rho_t$ is not continuous at $t=2$ with respect to the flat norm.
\end{example}

\begin{example}
\label{delta1}
Consider the case of $\alpha (W)=\frac{1}{1+W}$,
the trivial initial datum $\rho_0=0$ and influx $\mu = \delta_1$,
consisting of a single unit mass $\delta_1$ at $t=1$.
Then for every $1\leq t < 3$ the solution $\rho_t=\delta_{(t-1)/2}$
consists of a single mass at $x=\frac{1}{2}(t-1)$, whereas for all
other $t$ the solution $\rho_t=0$ is the trivial measure.
Let $\|\cdot \|$ be any norm on $\mathcal{M}^{+}([0,1))$,
and set $\varepsilon =\|\delta_0\|$.
Then every open neighborhood $V$ of $t=1$ contains a time $t\in V$
such that $\|\rho_t-\rho_1\| = \|\delta_1\| \nless \varepsilon.$
\end{example}

In order to discount the importance of the exact time at which
point masses enter the system or exit from it, we introduce
a modification of the flat norm.
First, define the piecewise linear {\em hat}-function
$h:\mathbb{R} \mapsto \mathbb{R}$ by
\begin{equation}
\label{hatfcn}
h(x)= \left\{
\begin{array}{cc}
\frac{1}{2}-|\frac{1}{2}-x| & \text{ if } x \in [0,1)\\
0 & \text{otherwise}
\end{array}
\right.
\end{equation}
Define the  map $\phi: \mathcal{M}([0,1)) \mapsto [0,\infty)$ by
\begin{equation}
\label{semieqn1}
\phi(\nu) = \sup_{f\in \mathcal{F}} \left|\int_{[0,1)} fh\,d\nu\right|.
\end{equation}
\textbf{Note: }
If $f$ and $h$ are both nonnegative Lipschitz continuous functions
with Lipschitz constant~$1$  and both are bounded above by~$1$ then
their product $fh$ is Lipschitz with Lipschitz constant~$2$ and is
also bounded by~$1$. It is immediate that:

\begin{lemma}
The map $\phi$ defines a seminorm on the space $\mathcal{M}([0,1))$.
\end{lemma}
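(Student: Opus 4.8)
The plan is to verify the three defining properties of a seminorm directly from formula~\eqref{semieqn1}, using only that $\mathcal{M}([0,1))$ is a real vector space and that, for each fixed bounded Borel function $g$ on $[0,1)$, the map $\nu \mapsto \int_{[0,1)} g\,d\nu$ is linear. First I would record finiteness so that $\phi$ is genuinely a map into $[0,\infty)$: for every $\nu \in \mathcal{M}([0,1))$ and every $f \in \mathcal{F}$, the product $fh$ is a bounded Borel function with $|fh| \leq 1$ on $[0,1)$ (as observed in the Note preceding the statement, since $0 \leq h \leq \tfrac12$ and $0 \leq f \leq 1$), hence $\left|\int_{[0,1)} fh\,d\nu\right| \leq |\nu|([0,1)) < \infty$, where $|\nu|$ is the total variation measure of~$\nu$. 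Taking the supremum over $f \in \mathcal{F}$ gives $0 \leq \phi(\nu) \leq |\nu|([0,1)) < \infty$; in particular $\phi(0) = 0$, since the zero measure integrates to $0$ against every $fh$.

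Next I would check absolute homogeneity and the triangle inequality. For homogeneity, given $c \in \mathbb{R}$ and $\nu \in \mathcal{M}([0,1))$, linearity of the integral yields $\int_{[0,1)} fh\,d(c\nu) = c\int_{[0,1)} fh\,d\nu$ for every $f \in \mathcal{F}$, so that $\left|\int_{[0,1)} fh\,d(c\nu)\right| = |c|\,\left|\int_{[0,1)} fh\,d\nu\right|$; taking the supremum over $f \in \mathcal{F}$ gives $\phi(c\nu) = |c|\,\phi(\nu)$. For the triangle inequality, fix $\nu_1,\nu_2 \in \mathcal{M}([0,1))$ and an arbitrary $f \in \mathcal{F}$; additivity of the integral and the triangle inequality in $\mathbb{R}$ give
$$\left|\int_{[0,1)} fh\,d(\nu_1+\nu_2)\right| \;\leq\; \left|\int_{[0,1)} fh\,d\nu_1\right| + \left|\int_{[0,1)} fh\,d\nu_2\right| \;\leq\; \phi(\nu_1) + \phi(\nu_2).$$
Since the right-hand side is independent of $f$, taking the supremum over $f \in \mathcal{F}$ on the left yields $\phi(\nu_1+\nu_2) \leq \phi(\nu_1) + \phi(\nu_2)$, completing the verification of the seminorm axioms.

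There is no substantive obstacle here; the only point that warrants a moment's care is finiteness of $\phi$, which is why I would isolate the bound $\phi(\nu) \leq |\nu|([0,1))$ at the outset. I would also note explicitly that $\phi$ is only a seminorm and not a norm: since $h$ vanishes at the endpoints, any $\nu \in \mathcal{M}([0,1))$ whose restriction to the support of $h$ is zero (for instance $\nu = c\,\delta_0$) satisfies $\phi(\nu)=0$, so positive definiteness fails. This is exactly the feature that makes $\phi$ suited to discounting the precise times at which point masses enter or leave the system near the boundary, as discussed around Examples~\ref{delta0} and~\ref{delta1}.
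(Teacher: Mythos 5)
Your proof is correct and follows the same direct route the paper treats as immediate: verifying finiteness, absolute homogeneity, and the triangle inequality from linearity of the integral and the boundedness of $fh$ noted just before the lemma. The added remark that $\phi$ fails positive definiteness (e.g.\ $\phi(\delta_0)=0$ since $h(0)=0$) is a correct and apt observation consistent with the paper's discussion.
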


\begin{example}
Continuing the example~\ref{delta0}, it is easy to calculate
that for all $t<2$ one has
$\phi (\rho_t-\rho_2)=\frac{1}{2}-\left|\frac{1}{2}-\frac{t}{2}\right|$
while for all $t\geq 2$ one has $\phi (\rho_t-\rho_2)=0$,
and for these special data the solution $t\mapsto \rho_t$ is a
continuous curve in $\mathcal{M}([0,1))$ when endowed with
the seminorm $\phi$.
\end{example}

\begin{theorem}
For every $\mu \in \mathcal{M}^{+}((0,T])$ and $\rho_0 \in \mathcal{M}^{+}([0,1))$,
the unique solution $\rho \colon [0,T] \mapsto \mathcal{M}^{+}([0,1)))$
of system~\eqref{eq3}, and thus also of \eqref{geqn4},
is continuous under the seminorm $\phi$.
\end{theorem}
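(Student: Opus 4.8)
The plan is to exploit the explicit Lagrangian representation of $\rho_t$ from Definition~\ref{LagSolDef} together with the defining feature of the hat function $h$: it vanishes at $x=0$ and decays linearly to $0$ as $x\to 1$, so every competitor $g:=fh$ with $f\in\mathcal F$ discounts the mass that enters at $x=0$ and the mass that leaves at $x=1$. Fixing $t_0\in[0,T]$, I will in fact establish the Lipschitz estimate $\phi(\rho_t-\rho_{t_0})\le C\,|t-t_0|$ with $C$ depending only on $\rho_0([0,1))$ and $\mu((0,T])$, which is stronger than continuity and immediately yields the theorem (at $t_0\in\{0,T\}$ only the corresponding one-sided estimate is needed).

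First I would record the ingredients. With $\xi(t)=\int_0^t\alpha(W(s))\,ds$ as in \eqref{aftereqnX}, since $0<\alpha\le 1$ the curve $\xi$ is nondecreasing and $1$-Lipschitz, so $|\xi(t)-\xi(t_0)|\le|t-t_0|$. For $f\in\mathcal F$ set $g:=fh$ and extend $g$ by $0$ outside $[0,1)$; because $h$ vanishes at both endpoints this zero-extension is globally Lipschitz on $\mathbb{R}$ with constant at most $2$, satisfies $0\le g\le 1$, and obeys the pointwise bound $|g(x)|\le h(x)\le\min\{x,1-x\}$ for $x\in[0,1)$. By Definition~\ref{LagSolDef} in pushforward form, for every $t$,
\[
\int_{[0,1)} g\,d\rho_t=\int_{[0,1)} g\big(x_0+\xi(t)\big)\,d\rho_0(x_0)+\int_{(0,t]} g\big(\xi(t)-\xi(s)\big)\,d\mu(s).
\]
Subtracting the analogous expression at $t_0$, the $\rho_0$-part is bounded, using the global Lipschitz bounds on $g$ and $\xi$, by $2\,|\xi(t)-\xi(t_0)|\,\rho_0([0,1))\le 2|t-t_0|\,\rho_0([0,1))$; here the case in which a characteristic crosses $x=1$ between $t_0$ and $t$ is harmless precisely because the zero-extension of $g$ is still Lipschitz across $x=1$.

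For the $\mu$-part, assume $t_0<t$ (the case $t<t_0$ is symmetric). Writing
\begin{align*}
\int_{(0,t]} g(\xi(t)-\xi(s))\,d\mu(s)&-\int_{(0,t_0]} g(\xi(t_0)-\xi(s))\,d\mu(s)\\
&=\int_{(0,t_0]}\big[g(\xi(t)-\xi(s))-g(\xi(t_0)-\xi(s))\big]\,d\mu(s)+\int_{(t_0,t]} g(\xi(t)-\xi(s))\,d\mu(s),
\end{align*}
the first integral is bounded by $2|\xi(t)-\xi(t_0)|\,\mu((0,T])\le 2|t-t_0|\,\mu((0,T])$. In the second integral, for $s\in(t_0,t]$ the argument lies in $[0,\xi(t)-\xi(t_0)]\subseteq[0,|t-t_0|]$, so $|g(\xi(t)-\xi(s))|\le\xi(t)-\xi(s)\le|t-t_0|$ by the pointwise bound above, whence that term is at most $|t-t_0|\,\mu((t_0,t])\le|t-t_0|\,\mu((0,T])$. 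The analogous split for $t<t_0$ is over $(0,t]\cup(t,t_0]$, and on the last piece $s\in(t,t_0]$ the argument $\xi(t_0)-\xi(s)$ again lies in $[0,|t-t_0|]$, so the same pointwise bound applies. Collecting terms gives $\big|\int fh\,d\rho_t-\int fh\,d\rho_{t_0}\big|\le C|t-t_0|$ with, e.g., $C=2\rho_0([0,1))+3\mu((0,T])$, and this is uniform over $f\in\mathcal F$ since every estimate used only $0\le f\le 1$ and the $1$-Lipschitz bound on $f$; taking the supremum over $\mathcal F$ yields $\phi(\rho_t-\rho_{t_0})\le C|t-t_0|$.

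The one genuinely delicate point — and the conceptual content of the theorem — is the bookkeeping in this $\mu$-integral split: the term $\int_{(t,t_0]} g(\xi(t_0)-\xi(s))\,d\mu(s)$ (for $t\uparrow t_0$) involves the $\mu$-mass on $(t,t_0]$, which need not tend to $0$ if $t_0$ is an atom of $\mu$ (this is exactly the flat-norm failure in Example~\ref{delta1}); what rescues continuity under $\phi$ is that $h(0)=0$ forces the integrand there to be $O(|t-t_0|)$ rather than $O(1)$. Symmetrically, the decay of $h$ near $x=1$ is what keeps the zero-extension of $g$ Lipschitz and thus absorbs the jump caused by a mass exiting at $x=1$ (the phenomenon in Example~\ref{delta0}). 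Everything else is routine.
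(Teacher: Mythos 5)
Your proposal is correct and follows essentially the same route as the paper's proof: the Lagrangian representation of $\rho_t$, the Lipschitz bound (constant $2$) for $fh$ together with the $1$-Lipschitz dependence of the flow on time, and the endpoint bounds $h(x)\le x$, $h(x)\le 1-x$ to absorb the mass entering at $x=0$ and exiting at $x=1$. The only difference is cosmetic bookkeeping — you handle the exit through the globally Lipschitz zero-extension of $fh$ and thereby avoid the paper's explicit splitting at the crossing points $x_1,x_2$ and its reduction to $T\le 1$, obtaining a uniform Lipschitz-in-time estimate, which is a mild streamlining of the same argument.
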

\begin{proof}
Let $T>0$, $\mu \in \mathcal{M}^{+}((0,T])$, $\rho_0 \in \mathcal{M}^{+}([0,1))$
be arbitrary but fixed
and let $\rho \colon [0,T] \mapsto \mathcal{M}^{+}([0,1)))$
be the unique solution of system~\eqref{eq3}, and thus also of \eqref{geqn4},
and let $X$ be the associated semiflow.
Without loss of generality consider times $0 \leq t_2<t_1 \leq T=1$.
(For times larger than $1$, the continuity follows from the
semiflow property of $t\mapsto \rho_t$, via composition of
continuous functions.)

By the choice of the time $T\leq 1$, there exists locations
$0\leq x_1< x_2<1$ such that $X(t_1;0,x_1)=X(t_2;0,x_2)=1$.
We show that for every $\varepsilon >0$, if $t_1-t_2$
is sufficiently small, then $\phi(\rho_{t_1}-\rho_{t_2} )<\varepsilon$.
In particular, for any arbitrary fixed $f\in \mathcal{F}$
we find an upper bound for
$\left|\int_{[0,1)} fh\,d(\rho_{t_1}-\rho_{t_2})\right|$.
For those {\em parts} that are in the factory at both
times $t_2$ and $t_1$, a simple Lipschitz estimate will
do the job. However, for parts that entered, or exited from
the factory between these times, we use that for all
$x\in [0,1)$, $\;h(x)\leq x$ and $h(x)\leq 1-x$.
The first step uses that $\rho_t$ is constructed from
the pushforwards of the data $\rho_0$ and $\mu$.
\begin{subequations}
\begin{align}
\MoveEqLeft \left|\int_{[0,1)} fh\,d(\rho_{t_1}-\rho_{t_2})\right|
     = \left|\int_{[0,1)} f(x)h(x)\,d\rho_{t_1}(x)
            -\int_{[0,1)} f(x)h(x)\,d\rho_{t_2}\right|
\\ 
= & \left| \int_{[0,x_1)} (fh)(X(t_1;0,x_0)) \,d\rho_0(x_0)
      - \int_{[0,x_2)} (fh)(X(t_2;0,x_0)) \,d\rho_0(x_0)\right.
\\ \nonumber
& \left. +\int_{(0,t_1]} (fh)(X(t_1;s,0)) \,d\mu(s)
          -\int_{(0,t_2]} (fh)(X(t_2;s,0)) \,d\mu(s) \right|
\\ 
\leq &  \int_{[0,x_1)} \left| (fh)(X(t_1;0,x_0))-(fh)(X(t_2;0,x_0))\right| \,d\rho_0(x_0)
+ \int_{[x_1,x_2)} (fh)(X(t_2;0,x_0)) \,d\rho_0(x_0)
\\ \nonumber
& +\int_{(0,t_2]} \left|(fh)(X(t_1;s,0))-(fh)(X(t_2;s,0))\right| \,d\mu(s)
+\int_{(t_2,t_1]} (fh)(X(t_2;s,0)) \,d\mu(s)
\\ 
\label{twointeg}
\leq & \;2\int_{[0,x_1)} \left| X(t_1;0,x_0)-X(t_2;0,x_0)\right| \,d\rho_0(x_0)
       + \int_{[x_1,x_2)} h(X(t_1;0,x_0)) \,d\rho_0(x_0)
\\ \nonumber
& +2\int_{(0,t_2]} \left|X(t_1;s,0)-X(t_2;s,0)\right| \,d\mu(s)
    +\int_{(t_2,t_1]} h(X(t_2;s,0)) \,d\mu(s).
\end{align}
\end{subequations}
In the last step, the first and third integral in \eqref{twointeg}
use the Lipschitz constant $2$ for $fh$, whereas the
other two use that $f$ is bounded above by 1.
For the first and third integral in \eqref{twointeg} use that the semiflow $X$ is
Lipschitz continuous (for fixed second and third variables)
with Lipschitz constant $1$, and hence the integrals are
bounded above $(t_1-t_2)\cdot \rho([0,x))$ and
$(t_1-t_2)\cdot \mu((0,T])$, respectively.
For the second integral in \eqref{twointeg} note that for every $x\in [x_1,x_2)$,
$\;X(t_2;0,x)\geq 1-(t_1-t_2)$ and hence the integral is bounded
above by $(t_1-t_2)\cdot \rho([0,x))$.
For the fourth integral note that for every $s\in [t_2,t_1]$,
$X(t_1;s,0)< t_1-t_2$n and hence the integral is bounded above
by $(t_1-t_2)\cdot \mu((0,T])$.

Thus given any $\varepsilon >0$,
choose $\delta =\varepsilon/(2\rho([0,1))+2\mu((0,T])$.
For all $0\leq t_2\leq t_1<1$, if $t_1-t_2<\delta$,
then for every $f\in \mathcal{F}$,
$\left|\int_{[0,1)} fh\,d(\rho_{t_1}-\rho_{t_2})\right|<\varepsilon$
and hence $\phi(\rho_{t_1}-\rho_{t_2})\leq \varepsilon$.
\end{proof}
This establishes continuity of the solution $t\mapsto \rho_t$
using the seminorm $\phi$.
The ultimately desirable joint continuity of the semiflow with
respect to time, the influx, and the initial conditions
appears elusive.
However, we present in theorem~\ref{almost} that a useful property
of the semiflow, though it is not continuity with respect to initial
conditions.
This uses a slightly different seminorm as illustrated
in the following example.

\begin{example}
\label{g-no-good}
Consider the case of $\alpha (W)=\frac{1}{1+W}$, trivial influx $\mu = 0$ and
trivial initial datum $\tilde\rho_0=0$, $0<T\leq 1$,
$\varepsilon =\frac{T}{2}$ and $0<\delta<1$ arbitrary but fixed.
Set $x_0=\frac{\delta}{2}$
and let $\rho_0\in \mathcal{M}^{+}([0,1))$ be the measure consisting
of the unit point mass at $x_0$.
Then $\phi(\rho_0-\tilde\rho_0)=x_0<\delta$, yet the respective
solutions at time $T$ are $\tilde\rho_T=0$ and
$\rho_T$ consisting of a unit point mass at $(x_0+\frac{1}{2}T)$
and hence
$\phi(\rho_T-\tilde\rho_T)=x_0+\frac{1}{2}T \nless\frac{1}{2}T=\varepsilon.$
\end{example}
As established in the example above, the seminorm $\phi$
used (and needed) to establish continuity of the solution $\rho_t$
with respect to time, will not provide continuity with
respect to initial conditions using the seminorm~$\phi$.
However, using a similar seminorm that only discounts
variations close to the exit point $x=1$ appears better suited.
In analogy with \eqref{hatfcn}
define  $g\colon {\mathbb R}\mapsto \mathbb R$ by $g(x)=1-x$
and correspondingly to \eqref{semieqn1} define a modification $\psi$
of the flat norm on $\mathcal{M}([0,1))$ by
\begin{equation}\label{normeqn1}
\psi(\nu) = \sup_{f\in \mathcal{F}} \left|\int_{[0,1)} fg\,d\nu\right|.
\end{equation}

Recall the following lemma (Proposition $2.2$ from \cite{Thieme2018}).
\begin{lemma}
\label{afterDlem1}
The indicator function of every closed (open) set in $[0,1)$
is the pointwise limit of a decreasing  (respectively increasing)
sequence of bounded Lipschitz continuous functions $(f_n)$,
where each $f_n$ has Lipschitz constant $n$ and takes values between $0$ and $1$.
\end{lemma}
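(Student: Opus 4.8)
The plan is to construct the approximating sequences explicitly from the distance function, and then reduce the open-set case to the closed-set case by complementation. Let $C\subseteq[0,1)$ be closed in the subspace topology. If $C=\emptyset$ take $f_n\equiv 0$; otherwise set $\mathrm{dist}(x,C)=\inf_{c\in C}|x-c|$ and define, for each $n\in\mathbb{N}$,
\begin{equation*}
f_n\colon[0,1)\to[0,1],\qquad f_n(x)=\max\{0,\,1-n\,\mathrm{dist}(x,C)\}.
\end{equation*}
First I would record the elementary structural properties. By construction $f_n$ takes values in $[0,1]$. Since $x\mapsto\mathrm{dist}(x,C)$ is $1$-Lipschitz and $t\mapsto\max\{0,1-t\}$ is $1$-Lipschitz, the composition $f_n$ is $n$-Lipschitz, i.e. has Lipschitz constant $n$. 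Finally, for fixed $x$ the quantity $n\,\mathrm{dist}(x,C)$ is nondecreasing in $n$ while $t\mapsto\max\{0,1-t\}$ is nonincreasing, so the sequence $(f_n)$ is pointwise nonincreasing.

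Next I would verify the pointwise convergence $f_n\downarrow\chi_C$. If $x\in C$ then $\mathrm{dist}(x,C)=0$, hence $f_n(x)=1$ for every $n$. If $x\in[0,1)\setminus C$, then $d:=\mathrm{dist}(x,C)>0$: here one uses closedness of $C$ \emph{in} $[0,1)$, namely that $\mathrm{dist}(x,C)=0$ together with $x\in[0,1)$ would force $x\in\overline{C}\cap[0,1)=C$, a contradiction. Consequently $f_n(x)=0$ as soon as $n>1/d$, so $f_n(x)\to 0=\chi_C(x)$, monotonically from above. This proves the closed-set assertion.

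For the open-set assertion, let $U\subseteq[0,1)$ be open in the subspace topology and apply the first part to the relative complement $C=[0,1)\setminus U$, which is closed in $[0,1)$; this yields a nonincreasing sequence $(f_n)$ of $n$-Lipschitz functions with values in $[0,1]$ and $f_n\downarrow\chi_C$. Put $g_n:=1-f_n$. Then each $g_n$ has Lipschitz constant $n$ and values in $[0,1]$, the sequence $(g_n)$ is nondecreasing, and $g_n\uparrow 1-\chi_C=\chi_U$ pointwise, as required.

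The only delicate point — the ``main obstacle,'' such as it is — is the distinction between sets closed in $[0,1)$ and sets closed in $\mathbb{R}$, since $[0,1)$ is not itself closed. One must therefore invoke the identity $\mathrm{dist}(x,C)=0\iff x\in C$ only for points $x$ already known to lie in $[0,1)$ (using $\overline{C}\cap[0,1)=C$), rather than asserting that $C$ is closed in $\mathbb{R}$. Once this is respected, all remaining steps are the routine verifications indicated above.
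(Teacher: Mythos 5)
Your construction is correct and complete: the truncated distance functions $f_n(x)=\max\{0,\,1-n\,\mathrm{dist}(x,C)\}$ are $[0,1]$-valued, $n$-Lipschitz (the inner map $x\mapsto n\,\mathrm{dist}(x,C)$ is $n$-Lipschitz and the outer truncation is $1$-Lipschitz, which is the cleaner way to phrase the sentence you wrote), pointwise nonincreasing, and converge to $\chi_C$; you also correctly isolate the only delicate point, namely that closedness of $C$ in the subspace $[0,1)$ (i.e.\ $\overline{C}\cap[0,1)=C$) is what guarantees $\mathrm{dist}(x,C)>0$ for $x\in[0,1)\setminus C$, and the open case follows by complementation exactly as you say. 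For comparison: the paper does not prove this lemma at all; it is recalled as Proposition~2.2 of the cited reference \cite{Thieme2018}, and the standard proof there is precisely this distance-function argument, so your write-up simply supplies the self-contained proof that the paper outsources, with the minor bonus of spelling out the empty-set case and the subspace-topology subtlety explicitly.
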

\begin{theorem}
The modification $\psi$  of the flat norm defined in \eqref{normeqn1} is a norm.
\end{theorem}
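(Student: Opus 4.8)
The plan is to dispatch the routine seminorm axioms quickly and concentrate the work on positive-definiteness. First, $\psi$ is finite and takes values in $[0,\infty)$ because $0\le fg\le 1$ on $[0,1)$ for every $f\in\mathcal{F}$, so $\psi(\nu)\le|\nu|([0,1))<\infty$; absolute homogeneity $\psi(c\nu)=|c|\,\psi(\nu)$ is immediate from linearity of the integral, and the triangle inequality follows exactly as in the proof that the map $\phi$ of \eqref{semieqn1} is a seminorm, since the admissible class $\mathcal{F}$ and the fixed weight $g$ enter in the same way. Thus the only thing to prove is that $\psi(\nu)=0$ implies $\nu=0$.

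So assume $\psi(\nu)=0$, i.e.\ $\int_{[0,1)}f(x)\,(1-x)\,d\nu(x)=0$ for every $f\in\mathcal{F}$. Because $\mathcal{F}$ contains only non-negative functions, I first enlarge the class of usable test weights: given any Lipschitz $h\colon[0,1)\to\mathbb{R}$ with Lipschitz constant at most $1$ and $\|h\|_\infty\le\tfrac12$, both $f_1:=\tfrac12+h$ and the constant $f_2:=\tfrac12$ lie in $\mathcal{F}$, and subtracting the corresponding identities gives $\int_{[0,1)}h(x)(1-x)\,d\nu(x)=0$; rescaling $h$ then removes the size and Lipschitz-constant restrictions, so $\int_{[0,1)}h\,(1-x)\,d\nu=0$ for every bounded Lipschitz $h$ on $[0,1)$. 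Now introduce the finite signed Borel measure $\lambda(E):=\int_E(1-x)\,d\nu(x)$ (finite since $0<1-x\le 1$), for which $\int h\,d\lambda=0$ for all bounded Lipschitz $h$. Applying Lemma~\ref{afterDlem1}, for every closed $C\subseteq[0,1)$ the indicator $\chi_C$ is the pointwise limit of a uniformly bounded sequence of Lipschitz functions, so dominated convergence (with dominating function $1\in L^1(|\lambda|)$) yields $\lambda(C)=\lim_n\int f_n\,d\lambda=0$. Since the closed subsets of $[0,1)$ form a $\pi$-system generating the Borel $\sigma$-algebra and $\lambda$ is a finite signed (hence regular) measure, $\lambda=0$; that is, $\int_E(1-x)\,d\nu(x)=0$ for every Borel $E\subseteq[0,1)$.

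It remains to pass from $\lambda=0$ back to $\nu=0$, and this is the step where the structure of $g$ really matters. Writing the Jordan decomposition $\nu=\nu^+-\nu^-$ with $\nu^\pm$ concentrated on disjoint Borel sets $P,N$, for any $F\subseteq P$ and every $n$ we have $0=\int_{F\cap[0,1-1/n]}(1-x)\,d\nu^+\ge\frac1n\,\nu^+\bigl(F\cap[0,1-\tfrac1n]\bigr)$, so $\nu^+\bigl(F\cap[0,1-\tfrac1n]\bigr)=0$; letting $n\to\infty$ and using continuity from below of $\nu^+$ gives $\nu^+(F)=0$. As this holds for all $F\subseteq P$ and $\nu^+$ is concentrated on $P$, we get $\nu^+=0$, and symmetrically $\nu^-=0$, whence $\nu=0$. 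The main obstacle is precisely the degeneracy of the weight $g(x)=1-x$ at the right endpoint: it has no uniform positive lower bound on $[0,1)$, so one can neither test directly against indicators nor simply divide by $g$; the remedy is the two-sided workaround above — first widen the admissible test weights (a difference of two $\mathcal{F}$-functions, then a rescaling) so that the auxiliary measure $\lambda$ satisfies the flat-norm vanishing condition, and then localize to $[0,1-\tfrac1n]$ and let $n\to\infty$ to transfer the vanishing of $\lambda$ to the vanishing of $\nu$.
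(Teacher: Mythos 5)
Your proof is correct, and it takes a genuinely different route from the paper's. The paper verifies positive-definiteness only for $\nu\in\mathcal{M}^{+}([0,1))$: it uses outer regularity to reduce to open sets $G$, cuts off an $\varepsilon$-small piece $\nu((1-\delta,1))$, takes the increasing Lipschitz approximants $f_n\to\chi_G$ from Lemma~\ref{afterDlem1}, divides them by $g$ on $[0,1-\delta]$ with an explicit rescaling of the Lipschitz constant to land back in $\mathcal{F}$, and concludes $\nu(G\cap[0,1-\delta])=0$ by monotone convergence. You instead work at the level of the measure: first you enlarge the admissible test class to all bounded Lipschitz functions via the affine trick ($\tfrac12+h$ minus the constant $\tfrac12$, then rescale), then you form the auxiliary signed measure $\lambda=g\,d\nu$, kill it on closed sets by Lemma~\ref{afterDlem1} plus dominated convergence, get $\lambda=0$ from the $\pi$-system/uniqueness argument, and finally recover $\nu=0$ by localizing to $[0,1-\tfrac1n]$ where $g\geq\tfrac1n$ and using continuity from below on the Jordan parts. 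What your route buys: it treats general signed $\nu$, which is what the stated norm property on $\mathcal{M}([0,1))$ actually requires (the paper's argument, relying on nonnegativity of $\nu$ to restrict integrals to subsets and to invoke monotone convergence, covers only the positive cone), and it avoids the $\varepsilon$--$\delta$ and outer-regularity bookkeeping by postponing the division by $g$ to a single clean localization step. What the paper's route buys is a more elementary, self-contained computation that never leaves the class $\mathcal{F}$ except through an explicit rescaling. One cosmetic remark: in your $\pi$-system step the decisive point is that $\lambda^{+}$ and $\lambda^{-}$ are finite measures agreeing on the closed sets (a $\pi$-system containing $[0,1)$ and generating the Borel $\sigma$-algebra), so Dynkin's uniqueness theorem gives $\lambda=0$; the appeal to regularity there is superfluous.
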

\begin{proof}
Clearly, \eqref{normeqn1} defines a seminorm.
It remains to be verified that for every
$\nu \in \mathcal{M}^{+}([0,1))$, $\psi(\nu)$ implies $\nu =0$.

Let $\nu \in \mathcal{M}^{+}([0,1))$ be arbitrary but fixed such that $\psi(\nu)=0$.
Then for every Borel set $T\subseteq [0,1)$
and for every function $f$ that is Lipschitz continuous with Lipschitz constant $1$
and with values between $0$ and $1$, $\int_{T} fg \, d\nu=0$,
where $T \subseteq [0,1)$ is a Borel set.
In addition, for arbitrary but fixed $\varepsilon>0$,
there exists $\delta \in (0, 1)$ such that $\nu((1-\delta, 1))<\varepsilon$.
Since $\nu$ is regular, for every Borel set $A \subseteq [0,1)$,
\begin{align}
\nu(A) = \inf \left\{\nu(G)\colon A \subseteq G \subseteq [0,1), G \text{ is an open set }\right\}.
\end{align}
Now fix Borel set $A \subseteq [0,1)$ and let $G \supseteq A$ be an open set in $[0,1)$. Then
\begin{align}
\nu(G) = \nu(G\cap [0, 1-\delta]) +\nu (G\cap (1-\delta, 1))<\nu(G \cap [0, 1-\delta]) + \varepsilon.
\end{align}
By lemma \ref{afterDlem1}, there exists an increasing sequence
of bounded Lipschitz continuous functions $(f_n)$ such that $f_n \to \chi_{G}$ pointwise.
Let $f_n^{*} = \frac{1}{g} f_n$.
Since the function $\frac{1}{g}$ is continuously differentiable
and bounded above by $\frac{1}{\delta}$
over the interval $[0, 1-\delta]$, $f_n^{*}$ is bounded above
by $\frac{1}{\delta}$ and Lipschitz continuous with Lipschitz constant
$n\cdot \frac{1}{\delta}+1\cdot \frac{1}{\delta^2}$
over the interval $[0, 1-\delta]$.
Note that $\frac{1}{\frac{1}{\delta} \left(n+\frac{1}{\delta}\right)} f_n^{*}$ is Lipschitz continuous with Lipschitz constant $1$ and with values between $0$ and $1$ on $[0, 1-\delta]$.
Thus,
\begin{align}
& \int_{[0,1-\delta]} f_n^{*} g \, d\nu = 0.
\end{align}
By the Monotone Convergence Theorem,
\begin{align}
\int_{[0, 1-\delta]} \chi_G \,d\nu
&= \lim\limits_{ n\to \infty} \int_{[0, 1-\delta]} f_n \, d\nu
= \lim\limits_{n\to \infty} \int_{[0, 1-\delta]} f_n^{*}:wq
g \, d\nu.
\end{align}
Thus,
\begin{align}
[\nu(G \cap [0, 1-\delta]) = \int_{[0,1-\delta]} \chi_{G}\,d\nu =0.
\end{align}
Therefore, $\nu(G)< \varepsilon$ which implies that $\nu(A)=0$ and thus $\nu=0$.
\end{proof}

The following example illustrates how this norm $\psi$ avoids
the problems of the seminorm $\phi$ with regards to continuity
with respect to initial conditions.

\begin{example}
\label{h-maybe-good}
Consider the case of $\alpha (W)=\frac{1}{1+W}$, trivial influx $\mu = 0$ and
initial data $\rho_{10}$ and $\rho_{20}$ consisting
of point masses of sizes $M\geq m\geq 1$ located at $0\leq a\leq b<\frac{1}{2}$,
respectively. Then
\begin{align}
\psi( \rho_{20}-\rho_{10}) & =M(1-a)-m(1-b)+m(1-b)(b-a)
\\ & =(M-m)(1-a)+m(b-a)(2-b).
\end{align}
Suppose that $\delta > \psi( \rho_{20}-\rho_{10})$.
Then, in particular, $M-m <2\delta$ and $b-a<\delta$.
At any small time $0\leq t\leq 1$ (before either mass
exists the system), the measures $\rho_{1t}$ and $\rho_{2t}$
are point masses of sizes $M$ and $m$ at the locations
$(a+\frac{t}{1+M})\leq (b+\frac{t}{1+m})$, respectively.
It is easily seen that
\begin{equation}
\psi( \rho_{2t}-\rho_{1t})=
M(1-(a+\frac{t}{1+M}))-m(1-(b+\frac{t}{1+m}))(1-((b+\frac{t}{1+m})-(a+\frac{t}{1+M}))).
\end{equation}
Evaluating this at $m=1$, $M=m+x$, $b=a+y$ gives a simple rational
expression in $x,y,t,\delta$ whose numerator vanishes at $x=y=0$
(and denominator bounded away from zero). In particular if $2|x|,|y|<\delta$
then
\begin{equation}
\psi( \rho_{2t}-\rho_{1t})\leq
\frac{2\delta(2\delta^2+(4a+2t-2)\delta+\frac{1}{2}t^2+(a+2)t+8a-12)}{8+4\delta}.
\end{equation}
Thus it is clear that for every $t>0$ and every $\varepsilon >0$
it is possible to choose $\delta>0$ such that if the initial data
$M,m=1,a,b$
as above satisfy  $\psi( \rho_{20}-\rho_{10})<\delta$, then
$\psi( \rho_{2t}-\rho_{1t})<\varepsilon$.
\end{example}
This example shows that replacing the seminorm $\phi$ by the
norm $\psi$ on $\mathcal{M}([0,1))$ provides some hope for
continuity with respect to initial conditions.
This norm preserves the features of $\phi$ by discounting
the importance of the specific exit times of large masses, but it
avoids the trouble presented in example~\ref{h-maybe-good}.
We have not been able to show that, in general,
the semiflow $(t,\rho_0)\mapsto \rho_t$
is continuous with respect to the initial datum $\rho_0$
and the norm~$\psi$. However,
we have the following result which is weaker than continuity.

\begin{theorem}
\label{almost}
For every $\mu \in \mathcal{M}^{+}((0,T])$ and every $\rho_0 \in \mathcal{M}^{+}([0,1))$,
the unique weak solution $\rho \colon [0,T] \mapsto \mathcal{M}^{+}([0,1))$
of system~\eqref{eq3} satisfies:
for every initial condition $\tilde{\rho_0}\in \mathcal{M}^{+}([0,1))$
and every $\varepsilon>0$, there exist $\delta >0$ and $\tau>0$ such that
if $\phi(\tilde{\rho_0}-\rho_0)<\delta$ and $t<\tau$,
then  $\;\phi(\tilde{\rho_t}-\rho_t)<\varepsilon$.
\end{theorem}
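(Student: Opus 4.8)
The plan is to compare the two solutions through their explicit Lagrangian representations, exploiting that the weight $h$ in the seminorm $\phi$ vanishes at \emph{both} endpoints of $[0,1)$; this is exactly what discounts mass that has just entered near $x=0$ or is about to exit near $x=1$. Write $\xi$ and $\tilde\xi$ for the characteristic curves associated with $(\rho_0,\mu)$ and $(\tilde\rho_0,\mu)$ constructed in Subsection~\ref{LargeTimes}, so $\dot\xi(t)=\alpha(W(t))$, $\dot{\tilde\xi}(t)=\alpha(\tilde W(t))$ with $W(t)=\rho_t([0,1))$, $\tilde W(t)=\tilde\rho_t([0,1))$, and $\xi(0)=\tilde\xi(0)=0$. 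By the existence and uniqueness results of Section~\ref{sect4} the unique weak solution agrees with the Lagrangian solution, and since the semiflows are the translations $X(t;r,x)=x+\xi(t)-\xi(r)$, Definition~\ref{LagSolDef} and the pushforward change of variables give, for every $t\in[0,T]$ and $f\in\mathcal{F}$,
\[
\int_{[0,1)} fh\,d\rho_t=\int_{[0,1)}(fh)(x+\xi(t))\,d\rho_0(x)+\int_{(0,t]}(fh)(\xi(t)-\xi(s))\,d\mu(s),
\]
and likewise for $\tilde\rho_t$ with $\tilde\xi,\tilde\rho_0$ replacing $\xi,\rho_0$. Here $fh$ is extended by zero to arguments $\ge 1$, so transported mass that has already exited through $x=1$ contributes nothing; because $h(y)\le 1-y$ near $y=1$, this extension is still Lipschitz with constant $2$ and bounded by $1$ on all of $[0,\infty)$.

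Next I would subtract the two identities and insert $\int_{[0,1)}(fh)(x+\xi(t))\,d\tilde\rho_0(x)$ to split $\int fh\,d(\tilde\rho_t-\rho_t)=A+B+C$, where $A=\int_{[0,1)}\bigl[(fh)(x+\tilde\xi(t))-(fh)(x+\xi(t))\bigr]\,d\tilde\rho_0(x)$ records the difference of the two flows acting on $\tilde\rho_0$, $B=\int_{[0,1)}(fh)(x+\xi(t))\,d(\tilde\rho_0-\rho_0)(x)$ records the difference of the two initial data under the common flow, and $C=\int_{(0,t]}\bigl[(fh)(\tilde\xi(t)-\tilde\xi(s))-(fh)(\xi(t)-\xi(s))\bigr]\,d\mu(s)$.

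For $A$ and $C$ I would use only that $\alpha$ has values in $(0,1]$, so that $\xi(t),\tilde\xi(t)\in[0,t]$ and $\xi(t)-\xi(s),\tilde\xi(t)-\tilde\xi(s)\in[0,t-s]$, whence $|\tilde\xi(t)-\xi(t)|\le t$ and $|(\tilde\xi(t)-\tilde\xi(s))-(\xi(t)-\xi(s))|\le t$ — no Gr\"onwall argument is needed — together with the global Lipschitz constant $2$ of the extended $fh$; this yields $|A|\le 2t\,\tilde\rho_0([0,1))$ and $|C|\le 2t\,\mu((0,T])$. For $B$ the key observation is that $x\mapsto(fh)(x)$ is itself one of the functions in the family defining $\phi$, so $\bigl|\int_{[0,1)}(fh)(x)\,d(\tilde\rho_0-\rho_0)\bigr|\le\phi(\tilde\rho_0-\rho_0)$; combining this with $|(fh)(x+\xi(t))-(fh)(x)|\le 2\xi(t)\le 2t$ for all $x\in[0,1)$ and the total-variation bound $|\tilde\rho_0-\rho_0|([0,1))\le\tilde\rho_0([0,1))+\rho_0([0,1))$ gives $|B|\le\phi(\tilde\rho_0-\rho_0)+2t\bigl(\tilde\rho_0([0,1))+\rho_0([0,1))\bigr)$. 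Taking the supremum over $f\in\mathcal{F}$ produces, for all $t\in[0,T]$,
\[
\phi(\tilde\rho_t-\rho_t)\le\phi(\tilde\rho_0-\rho_0)+2t\bigl(2\,\tilde\rho_0([0,1))+\rho_0([0,1))+\mu((0,T])\bigr).
\]
With $\rho_0,\mu,\tilde\rho_0$ and $\varepsilon>0$ fixed it then suffices to set $K=2\,\tilde\rho_0([0,1))+\rho_0([0,1))+\mu((0,T])+1$, $\delta=\varepsilon/2$, and $\tau=\varepsilon/(4K)$: if $\phi(\tilde\rho_0-\rho_0)<\delta$ and $t<\tau$ the displayed estimate gives $\phi(\tilde\rho_t-\rho_t)<\varepsilon$, and otherwise the implication is vacuous. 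No iteration over time is required, since the displayed estimate already holds on all of $[0,T]$ and only small $t$ is used.

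The real content — and the reason this is ``almost continuity'' rather than continuity in $\rho_0$, cf.\ Example~\ref{g-no-good} — lies in terms $A$ and $B$: a point mass placed near $x=0$ is invisible to $\phi$ because $h(0)=0$, yet at time $t$ it has been transported to $x\approx\tilde\xi(t)$ and already contributes on the order of $\tilde\rho_0([0,1))\cdot t$ to $\phi(\tilde\rho_t-\rho_t)$. This is precisely what forces the restriction to small $t$ and the dependence of $\tau$ on the total mass $\tilde\rho_0([0,1))$ rather than merely on the $\phi$-distance of the two data, and it is why the modulus cannot be made uniform over a $\phi$-ball of initial conditions. The one genuinely finite-domain point to check carefully is that the zero-extension of $fh$ past $x=1$ is still Lipschitz with constant $2$, so that the bounds on $A$ and $C$ survive the instant a point mass crosses the exit at $x=1$; this is immediate from $h(y)\le 1-y$.
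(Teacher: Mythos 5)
Your proposal is correct and follows essentially the same route as the paper's proof: compare the two Lagrangian representations integrated against $fh$, split by adding and subtracting a mixed term, use $|\tilde\xi(t)-\xi(t)|\le t$ (and the increment bound $|(\tilde\xi(t)-\tilde\xi(s))-(\xi(t)-\xi(s))|\le t-s$) together with the Lipschitz constant $2$ of $fh$, bound the data-difference term by $\phi(\tilde\rho_0-\rho_0)$ plus an $O(t)$ correction, and let $h\le\min\{x,1-x\}$ discount mass near the two boundaries. Your one real simplification is the zero-extension of $fh$ past $x=1$ (still Lipschitz with constant $2$ because $h(y)\le 1-y$), which absorbs the paper's explicit exit-point bookkeeping (the locations $x_0^1,x_0^2$, the separate $[x_0^1,x_0^2)$ term, and the cap $\tau\le\min\{1,t_1,t_2,\dots\}$) and yields an estimate valid on all of $[0,T]$ with an explicit final choice of $\delta$ and $\tau$, which the paper's proof leaves to the reader to assemble.
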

\begin{proof}
Consider the control influx $\mu$ and two initial conditions
$\rho_0^{1}$, $\rho_0^{2} \in \mathcal{M}^{+}([0,1))$.
For $k=1, 2$, denote by $\rho_{t}^k$, $W_k$ and $X_k$ the weak measure-valued solution,
the total load and the semiflow corresponding to the initial condition~$\rho_0^{k}$ respectively.
For convenience, for $t \in [0,T]$, let $\xi_k (t) = X_k(t; 0, 0)$
be the characteristic curve as in section \ref{sect31}. Thus
\begin{align}
    \begin{cases}
       \xi_k'(t) &= \alpha(W_k(t)) \text{ for a. e. } t \in [0,T],\\
       \xi_k(0) &= 0.
    \end{cases}
\end{align}
Since the velocity $\alpha_k$ is positive and bounded above by $1$,
for every $t\in [0,T]$, we have
\begin{align}
\left|\xi_1(t)- \xi_2(t)\right| \leq \int_0^t \left|\alpha(W_1(s)) - \alpha(W_2(s))\right|\,ds \leq t.
\end{align}
Furthermore, for every $x_0 \in [0,1)$,
\begin{align}
\left|X_1(t; 0, x_0) - X_2(t; 0, x_0) \right| = \left|\xi_1(t) - \xi_2(t) \right| \leq t,
\end{align}
and for every $s\in [0,t]$,
\begin{subequations}
\begin{align}
\left|X_1(t; s, 0) - X_2(t; s, 0) \right| & = \left|\xi_1(t)-\xi_1(s) -\left(\xi_2(t)-\xi_2(s)\right) \right|
\\
& = \left|\xi_1(t)-\xi_2(t) + \xi_2(s)-\xi_1(s)\right|
\\
& \leq \left|\xi_1(t)-\xi_2(t)\right| + \left|\xi_1(s)-\xi_2(s)\right|
\\ 
& \leq t+s \leq 2t.
\end{align}
\end{subequations}
In addition, there exist $t_1, t_2 \in [0,1]$ such that $X_1\left(t_1; 0, \frac{1}{2}\right)=1$
and   $X_2\left(t_2; 0, \frac{1}{2}\right)=1$.
For an arbitrary but fixed $\varepsilon>0$, consider the time interval $[0,\tau]$ where
\begin{align}
\tau = \min \left\{1, t_1, t_2,\frac{\varepsilon}{15\left(\rho_0^1([0,1))
+\rho_0^2([0,1))\right)},\frac{\varepsilon}{10\rho_0^2([0,1))}, \frac{\varepsilon}{20\mu([0,T])}\right\}.
\end{align}
For arbitrary but fixed $t \in [0,\tau]$ there exists locations $x_0^1, x_0^2 \in [0,1)$
such that $X_1(t; 0, x_0^1)=1$ and $X_2(t; 0, x_0^2)=1$.
Without loss of generality, we assume that $x_0^1 < x_0^2$.
Note that $x_0^1, x_0^2 \in (\frac{1}{2}, 1)$.
Next we show that if $\delta =\frac{\varepsilon}{5}>0$,  then for every
$t \in [0,\tau]$, if $\phi(\rho_0^1 - \rho_0^2) < \delta$,
then $\phi(\rho_t^{1} - \rho_t^{2})<\epsilon$.
For arbitrary but fixed $f \in \mathcal{F}$, and for every $t\in [0,\tau]$, we have
\begin{subequations}
\begin{align}
\MoveEqLeft \left|\int_{[0,1)} fh\,d(\rho_{t}^{1}-
\rho_{t}^{2}) \right| = \left|\int_{[0,1)} f(x)h(x)\,d\rho_{t}^{1}(x)
-\int_{[0,1)} f(x)h(x)\,d\rho_{t}^{2}(x)\right|
\\
= & \left|\int_{[0,x_0^1)} (fh)(X_1(t; 0,x_0))\,d\rho_0^1(x_0)
-\int_{[0,x_0^2)} (fh)(X_2(t;0,x_0))\,d\rho_0^{2}(x_0)\right. 
\\ \nonumber 
& \left. +\int_{[0,t)} (fh)(X_1(t;s,0))\,d\mu(s)
- \int_{[0,t)} (fh)(X_2(t;s,0))\,d\mu(s) \right|
\\
=& \left|\int_{[0,x_0^1)} (fh)(X_1(t; 0,x_0))\,d\rho_0^1(x_0)
-\int_{[0,x_0^1)} (fh)(X_1(t; 0,x_0))\,d\rho_0^2(x_0) \right.
\\ \nonumber
& +\left. \int_{[0,x_0^1)} (fh)(X_1(t; 0,x_0))\,d\rho_0^2(x_0)
-\int_{[0,x_0^1)} (fh)(X_2(t;0,x_0))\,d\rho_0^{2}(x_0)\right.
\\  \nonumber
& - \left. \int_{[x_0^1,x_0^2)} (fh)(X_2(t; 0,x_0))\,d\rho_0^2(x_0) \right. 
\\ \nonumber
& \left. +\int_{[0,t)} (fh)(X_1(t;s,0))\,d\mu(s)
- \int_{[0,t)} (fh)(X_2(t;s,0))\,d\mu(s) \right|
\\ \label{coneqn1hh}
 \leq & \left| \int_{[0,x_0^1)} (fh)(X_1(t; 0,x_0))\,d\rho_0^1(x_0)
 -\int_{[0,x_0^1)} (fh)(X_1(t; 0,x_0))\,d\rho_0^2(x_0) \right|
\\ \label{coneqn1h}
& + \int_{[0,x_0^1)} \left| (fh)(X_1(t; 0,x_0))-(fh)(X_2(t;0,x_0))\right|\,d\rho_0^{2}(x_0)
\\ \label{coneqn1i}
& + \int_{[x_0^1,x_0^2)} (fh)(X_2(t; 0,x_0))\,d\rho_0^2(x_0)
\\ \label{coneqn1j}
& +\int_{[0,t)} \left| (fh)(X_1(t;s,0)) - (fh)(X_2(t;s,0))\right|\,d\mu(s). 
\end{align}
\end{subequations}
By the triangle inequality, we obtain,
\begin{subequations}
\begin{align}
\MoveEqLeft  \left| \int_{[0,x_0^1)} (fh)(X_1(t; 0,x_0))\,d\rho_0^1(x_0)
-\int_{[0,x_0^1)} (fh)(X_1(t; 0,x_0))\,d\rho_0^2(x_0) \right|
\nonumber 
\\
&=\left|\int_{[0, x_0^1)}(fh)(X_1(t; 0, x_0)) \, d(\rho_0^1-\rho_0^2)(x_0)\right|
\\
&=   \left|\int_{[0, x_0^1)}(fh)(x_0) \, d(\rho_0^1-\rho_0^2)(x_0)
 +\int_{[0, x_0^1)}\left((fh)(X_1(t; 0, x_0))-(fh)(x_0)\right) \, d(\rho_0^1-\rho_0^2)(x_0)\right|
\\ 
&\leq  \left|\int_{[0, x_0^1)}(fh)(x_0) \, d(\rho_0^1-\rho_0^2)(x_0)\right|
 +\left|\int_{[0, x_0^1)}\left((fh)(X_1(t; 0, x_0))-(fh)(x_0)\right) \, d(\rho_0^1-\rho_0^2)(x_0)\right|.
\end{align}
\end{subequations}
Let $I_1 = \left|\int_{[0, x_0^1)}(fh)(x_0) \, d(\rho_0^1-\rho_0^2)(x_0)\right|$. Then
\begin{align}
I_1 & \leq \left|\int_{[0,1)} (fh) (x_0)\, d(\rho_0^1 - \rho_0^2) (x_0)\right|
+ \left|\int_{[x_0^1, 1)} (fh)(x_0) \, d(\rho_0^1 - \rho_0^2)(x_0)\right|
\\ \nonumber
& \leq \left|\int_{[0,1)} (fh) (x_0)\, d(\rho_0^1 - \rho_0^2) (x_0)\right|
+ \left|\int_{[x_0^1, 1)}(fh)(x_0)\,d\rho_0^1(x_0)\right|
+\left|\int_{[x_0^1, 1)}(fh)(x_0)\,d\rho_0^2(x_0)\right|.
\end{align}
Using that the function $f$ is bounded above by $1$,
$h$ is decreasing over $(\frac{1}{2}, 1)$ and $x_0^1 \in (\frac{1}{2}, 1)$, we have,
\begin{align}
\left|\int_{[x_0^1, 1)}(fh)(x_0)\,d\rho_0^1(x_0)\right|
& \leq \left|\int_{[x_0^1, 1)}h(x_0)\,d\rho_0^1(x_0)\right|
 \leq h(x_0^1) \rho_0^1([0,1))
= (1-x_0^1)\rho_0^1([0,1))
\leq t \rho_0^1([0,1)).
\end{align}
Similarly, we obtain
\begin{align}
\left|\int_{[x_0^1, 1)}(fh)(x_0)\,d\rho_0^1(x_0)\right| \leq t \rho_0^2([0,1)).
\end{align}
Using properties of the seminorm $\phi$,
\begin{align}
I_1 \leq \phi(\rho_0^1 - \rho_0^2) + t \left(\rho_0^1([0,1))+\rho_0^2([0,1))\right).
\end{align}
Let $I_2 =\left|\int_{[0, x_0^1)}\left((fh)(X_1(t; 0, x_0))-(fh)(x_0)\right)
\, d(\rho_0^1-\rho_0^2)(x_0)\right| $.
Using that the function $fh$ is Lipschitz continuous with Lipschitz constant $2$,
it follows that
\begin{align}
I_2 \leq & \left| \int_{[0, x_0^1)}\!\left((fh)(X_1(t; 0, x_0))
\!-\!(fh)(x_0)\right) \, d(\rho_0^1)(x_0)\right|
\!+\! \left| \int_{[0, x_0^1)}\!\left((fh)(X_1(t; 0, x_0))
\!-\!(fh)(x_0)\right) \, d(\rho_0^2)(x_0)\right|
\nonumber \\
\leq & \int_{[0, x_0^1)}\left|(fh)(X_1(t; 0, x_0))-(fh)(x_0)\right| \, d(\rho_0^1)(x_0)
+ \int_{[0, x_0^1)}\left|(fh)(X_1(t; 0, x_0))-(fh)(x_0)\right| \, d(\rho_0^2)(x_0)
\nonumber  \\
\leq & 2 \left(\int_{[0, x_0^1)} \left|X_1(t; 0, x_0)-x_0\right| \, d\rho_0^1(x_0)
+ \int_{[0, x_0^1)} \left|X_1(t; 0, x_0)-x_0\right| \, d\rho_0^2(x_0)\right)
\\ \nonumber
\leq & 2t\left(\rho_0^1([0,1))+\rho_0^2([0,1))\right).
\end{align}
Therefore, by the definition of the seminorm $\phi$,
the integral \eqref{coneqn1hh} is bounded above by
$\phi(\rho_0^1 - \rho_0^2)+3t\left(\rho_0^1([0,1))+\rho_0^2([0,1))\right)$.
Using that the function $fg$ is Lipschitz continuous
with Lipschitz constant $2$, we obtain that the 
integral \eqref{coneqn1h} is bounded above by
\begin{align}
2\int_{[0,1)} \left|X_1(t; 0, x_0)-X_2(t;0,x_0)\right|\,d\rho_0^{2}(x_0) \leq 2t\rho_0^2([0,1)).
\end{align}
The integral \eqref{coneqn1j} is bounded above by
\begin{equation}
2\int_{[0,t)} \left|X_1(t; s, 0)-X_2(t;s,0)\right|\,d\mu(s) \leq 4t\mu([0,T)).
\end{equation}
For the integral in \eqref{coneqn1i},
using that the function $f$ is bounded above by $1$
\begin{equation}
\int_{[x_0^1,x_0^2)} (fh)(X_2(t; 0,x_0))\,d\rho_0^2(x_0)
\leq \int_{[x_0^1,x_0^2)} h(X_2(t; 0,x_0))\,d\rho_0^2(x_0).
\end{equation}
Since the semiflow $X_2$ is increasing with respect to the third variable,
for $x_0 \in [x_0^1, x_0^2)$,
$X_2(t; 0, x_0) \in [X_2(t; 0, x_0^1), 1)\subset (\frac{1}{2}, 1)$.
Note that the function $h$ decreases over the interval $(\frac{1}{2}, 1)$.
Thus

\begin{align}
\MoveEqLeft \int_{[x_0^1,x_0^2)} h(X_2(t; 0,x_0))\,d\rho_0^2(x_0)
\leq \int_{[x_0^1,x_0^2)} h(X_2(t; 0, x_0^1))\, d\rho_0^2(x_0) 
\nonumber \\
& =  \int_{[x_0^1,x_0^2)} \left(1 -X_2(t; 0, x_0^1)\right)\, d\rho_0^2(x_0)
\leq \int_{[x_0^1,x_0^2)} \left(1 - x_0^1\right)\, d\rho_0^2(x_0)\nonumber\leq t\rho_0^2([0,1)).
\end{align}
The last inequality above is due to the fact that the velocity $\alpha_1$ is bounded above~by~$1$.
\end{proof}

\section{Conclusion and outlook}
\label{sect5}

We substantially relaxed the regularity hypotheses under which
well-posedness is guaranteed for the model~\eqref{npeqn1}
from~\cite{AM2006} for highly re-entrant semi-conductor manufacturing
systems, a model that has spawned much follow-up research.
By closely adhering to the features of the original industrial
problem, primarily by first focusing on the Lagrangian point of view,
we established well-posedness for Borel measure-valued data.

Attending to this specific system allowed us to delineate the boundaries
of what regularity properties may be expected  and possible to prove for
more general settings.
We presented a modification of usual weak solutions that uses a modified
set of test functions whose weaker regularity properties are adapted to
this system.
Pushing the envelope, we established continuity of the semiflow
with respect to time for a semi-norm, and proved that the semiflow is
generally not continuous for any norm.
We presented further partial results for continuity with respect
to initial conditions. Joint continuity using a single
metric does not appear possible. Further continuity properties
such as input-to state, and input to output may be subject of
future work.

The generalization to  vector-valued measures (shared machines for
different products) appears to be straightforward. More interesting
are generalizations to weighted contributions of the work-in-progress
which correspond to dispatch polices like PUSH and PULL that are often
implemented on the factory floor.
The continued interest in such weighted work in progress models,
applying to also multiple products is illustrated in~\cite{keimer2018}.
It is a nice challenge to combine these with weak solutions.
Another major challenge is to present a Pontryagin-like Maximum
Principle that applies to hyperbolic conservation laws such as our model,
with either $L^1$-data or measure-valued data.

\section*{Acknowledgments}
We gratefully acknowledge the feedback,
many invaluable suggestions, corrections by 
Steve Kaliszewski, Sebastien Motsch, Hal Smith, Horst Thieme who served
as members of the first author's Ph.D. committee, and Michael Herty,
the external reviewer of the first author's thesis.

\bibliographystyle{plain}
\bibliography{referenceDec}

\newcommand{\noop}[1]{}
\begin{thebibliography}{10}

\bibitem{AM2006}
Dieter Armbruster, Daniel Marthaler, Christian Ringhofer, Karl Kempf, and
  Tae-Chang Jo.
\newblock A continuum model for a re-entrant factory.
\newblock {\em Oper. Res.}, 54(5):933--950, 2006.

\bibitem{Herty2011}
Rinaldo~M. Colombo, Michael Herty, and Magali Mercier.
\newblock Control of the continuity equation with a non local flow.
\newblock {\em ESAIM Control Optim. Calc. Var.}, 17(2):353--379, 2011.

\bibitem{predprey}
Rinaldo~M. Colombo and Magali L\'{e}cureux-Mercier.
\newblock An analytical framework to describe the interactions between
  individuals and a continuum.
\newblock {\em J. Nonlinear Sci.}, 22(1):39--61, 2012.

\bibitem{CKW2010}
Jean-Michel Coron, Matthias Kawski, and Zhiqiang Wang.
\newblock Analysis of a conservation law modeling a highly re-entrant
  manufacturing system.
\newblock {\em Discrete Contin. Dyn. Syst. Ser. B}, 14(4):1337--1359, 2010.

\bibitem{shortpaper}
Gianluca Crippa and Magali L\'ecureux-Mercier.
\newblock Existence and uniqueness of measure solutions for a system of
  continuity equations with non-local flow.
\newblock {\em NoDEA Nonlinear Differential Equations Appl.}, 20(3):523--537,
  2013.

\bibitem{Evers16}
Joep H.~M. Evers.
\newblock Mild solutions are weak solutions in a class of (non)linear
  measure-valued evolution equations on a bounded domain.
\newblock {\em Adv. Math. Sci. Appl.}, 25(1):181--205, 2016.

\bibitem{thethesis}
Joep H.~M. Evers, Sander~C. Hille, and Adrian Muntean.
\newblock Measure-valued mass evolution problems with flux boundary conditions
  and solution-dependent velocities.
\newblock {\em SIAM J. Math. Anal.}, 48(3):1929--1953, 2016.

\bibitem{XGthesis}
Xiaoqian Gong.
\newblock {\em Weak Measure-Valued Solutions to a Nonlinear Conservation Law
  Modeling a Highly Re-entrant Manufacturing System}.
\newblock PhD thesis, Arizona State University,
  \url{https://repository.asu.edu/collections/7}, 8 2019.

\bibitem{hyp18}
Xiaoqian Gong and Matthias Kawski.
\newblock Analysis of a nonlinear hyperbolic conservatoin law with measure
  valued data.
\newblock In {\em Proceedings of the 17-th Conference on Hyperbolic Problems},
  pages 461--468, 2019.

\bibitem{Thieme2018}
Piotr Gwiazda, Anna Marciniak-Czochra, and Horst~R. Thieme.
\newblock Measures under the flat norm as ordered normed vector space.
\newblock {\em Positivity}, 22(1):105--138, 2018.

\bibitem{keimer2017}
Alexander Keimer and Lukas Pflug.
\newblock Existence, uniqueness and regularity results on nonlocal balance
  laws.
\newblock {\em J. Differential Equations}, 263(7):4023--4069, 2017.

\bibitem{keimer2018}
Alexander Keimer, Lukas Pflug, and Michele Spinola.
\newblock Nonlocal {S}calar {C}onservation {L}aws on {B}ounded {D}omains and
  {A}pplications in {T}raffic {F}low.
\newblock {\em SIAM J. Math. Anal.}, 50(6):6271--6306, 2018.

\bibitem{herty2010}
Michael La~Marca, Dieter Armbruster, Michael Herty, and Christian Ringhofer.
\newblock Control of continuum models of production systems.
\newblock {\em IEEE Trans. Automat. Control}, 55(11):2511--2526, 2010.

\bibitem{numerical}
Benedetto Piccoli and Francesco Rossi.
\newblock Transport equation with nonlocal velocity in {W}asserstein spaces:
  convergence of numerical schemes.
\newblock {\em Acta Appl. Math.}, 124:73--105, 2013.

\bibitem{GWD}
Benedetto Piccoli and Francesco Rossi.
\newblock Generalized {W}asserstein distance and its application to transport
  equations with source.
\newblock {\em Arch. Ration. Mech. Anal.}, 211(1):335--358, 2014.

\end{thebibliography}
\end{document}